\newcommand{\R}{\mathbb R}
\newtheorem{Th}{Theorem}[section]
\numberwithin{Th}{section} 
\newtheorem{df}[Th]{Definition}
\newtheorem{pro}[Th]{Proposition}
\newtheorem{lem}[Th]{Lemma}
\newtheorem{cor}[Th]{Corollary}
\newtheorem{exs}[Th]{Examples}
\newtheorem{rma}[Th]{Remark}
\newtheorem{rms}[Th]{Remarks}
\DeclareMathOperator*{\esssup}{ess\,sup}
\DeclareMathOperator*{\essinf}{ess\,inf}
\newcommand{\norme}[1]{\left\Vert #1\right\Vert}
\def\Xint#1{\mathchoice
{\XXint\displaystyle\textstyle{#1}}%
{\XXint\textstyle\scriptstyle{#1}}%
{\XXint\scriptstyle\scriptscriptstyle{#1}}%
{\XXint\scriptscriptstyle\scriptscriptstyle{#1}}%
\!\int}
\def\XXint#1#2#3{{\setbox0=\hbox{$#1{#2#3}{\int}$ }
\vcenter{\hbox{$#2#3$ }}\kern-.6\wd0}}
\def\dashint{\Xint-}
\numberwithin{equation}{section}
\providecommand{\keywords}[1]{\textbf{\textit{Keywords.}} #1}
\providecommand{\subclass}[1]{\textbf{\textit{2010 Mathematics Subject Classification.}} #1}
\begin{document}

\title{\textbf{Wolff potential estimates for supersolutions of equations with generalized Orlicz growth}}
\author{Allami Benyaiche}
\affil{\small{Department of Mathematics, Ibn Tofail University, B.P: 133, Kenitra, Morocco\\
              Laboratory: PDE, Algebra and Spectral Geometry\\
              \it{allami.benyaiche@uit.ac.ma}}}
\author{Ismail Khlifi}
\affil{\small{Department of Mathematics, Ibn Tofail University, B.P: 133, Kenitra, Morocco\\
              Laboratory: PDE, Algebra and Spectral Geometry\\
              \it{is.khlifi@gmail.com}}}

\date{}

\maketitle

\textbf{Abstract.} In this paper, we establish pointwise estimates for supersolutions of quasilinear elliptic equations with structural conditions involving a generalized Orlicz growth in terms of a Wolff type potential. As a consequence, under the extra assumption, we obtain that the supersolutions satisfy a Harnack inequality and local Hölder continuity.\\

\keywords{Wolff potential $\cdot$ Generalized Orlicz growth $\cdot$ Generalized $\Phi$-function $\cdot$ Generalized Orlicz-Sobolev space $\cdot$ Supersolutions $\cdot$ Superharmonic functions}\\
\\
\subclass{31C45 $\cdot$ 31C15 $\cdot$ 35J62 $\cdot$ 35C15}

\section{Introduction}
Let $u$ be a (weak) supersolution of the quasilinear elliptic equation
\begin{equation}
- \text{div} \mathcal{A}(x,\nabla u)= 0,
\end{equation}
in a bounded open set $\Omega \subset \R^n$, where $\mathcal{A}$: $\Omega \times \R^n \rightarrow \R^n$ is a Carathéodory regular vector field satisfies the generalized Orlicz growth
$$ \mathcal{A}(x, \xi) \cdot \xi \approx  G(x,|\xi|),$$
for some generalized $\Phi$-functions $G(\cdot)$ (see section $2$). Then there is a Radon measure $\mu \geq 0$ such that the equation
 \begin{equation}
- \text{div} \mathcal{A}(x,\nabla u)= \mu,
\end{equation}
is satisfied by $u$ in the weak sense. In the linear classical potential theory, this supersolution is given using the Green potential $G^\mu_\Omega$ of $\mu$, precisely, we have
$$u(x) = G^\mu_\Omega(x) + h(x),$$
Where $h$ is a harmonic function. In contrast with this linear situation, this representation is not available in the nonlinear situation as in the $p$-Laplacian case:  $G(x,t) = t^p$. Indeed, by the fundamental works of Kilpeläinen and Malý in $\cite{ref14,ref15}$, we only have pointwise estimates for supersolutions in terms of Wolff potential
\begin{equation}
c_1W^\mu_{1,p}(x,R) \leq u(x) \leq c_2 \inf_{B(x,R)}u + c_3W^\mu_{1,p}(x,2R),
\end{equation}
where $B(x,R) \subset \Omega$, $\mu$ is the Radon measure associated to the supersolution $u$ and $W^\mu_{1,p}(x,R)$ is the Wolff potential defined by
\begin{equation}
W^\mu_{1,p}(x,R) = \int_0^R \left(\frac{\mu(B(x,s)}{s^{n-p}}\right)^\frac{1}{p-1} \, \frac{\mathrm{d}s}{s}.
\end{equation}

Next, Trudinger and Wang $\cite{ref26}$ gave a new method based on Poisson modification and Harnack inequality. Mikkonen has treated the weighted situation in $\cite{ref23}$. Björn and Björn $\cite{ref5}$, and Hara  $\cite{ref8}$ have developed the proof of potential estimates in the metric measure spaces. A new proof has been offered by Kuusi and Mingione $\cite{ref29}$, which allows the covering of general signed measures, not necessarily in the dual space, where their approach considers Solutions Obtained by Limits of Approximations (SOLA) instead of considering weak solutions.

In the variable exponent case, $G(x,t) = t^{p(x)}$, Alkhutov and Krasheninnikova in $\cite{ref1}$ and Lukkari,  Maeda and Marola in $\cite{ref20}$ gave a proof of the two-side Wolff potential which defined by.
\begin{equation}
 W^\mu_{p(\cdot)}(x,R) = \displaystyle \int_0^R \left(\frac{\mu(B(x,s)}{s^{n-p(x)}}\right)^\frac{1}{p(x)-1} \, \frac{\mathrm{d}s}{s} = \displaystyle \int_0^R \left(\frac{\mu(B(x,s)}{s^{n-1}}\right)^\frac{1}{p(x)-1} \, \mathrm{d}s.
\end{equation}
For the Orlicz case, $G(x,t) = G(t)$, the problem has been studied by Malý in $\cite{ref21}$ and recently by  Chlebicka,  Giannetti and Zatorska-Goldstein in $\cite{ref6}$, with
\begin{equation}
 W^\mu_{G}(x,R) = \displaystyle \int_0^R g^{-1}\left(\frac{\mu(B(x,s)}{s^{n-1}}\right) \, \mathrm{d}s,
\end{equation}
where $g$ is the right-hand derivative of $G$. For further informations about the Wolff potential estimates and its extensions, we refer to $\cite{ref12,ref16,ref18,ref22,ref24}$.\\

In this paper, we give the Wolff potential estimates of supersolutions of the equation $(1.1)$ under structural conditions involving a generalized $\Phi$-function $G(\cdot)$. For this, as in $(1.5)$ and $(1.6)$, the natural definition of the Wolff potential associated with $G(\cdot)$ and $\mu$ is given by
\begin{equation}
 W^{\mu}_{G(\cdot)}(x,R) := \int_0^R g^{-1}\left( x,\frac{\mu(B(x,s)}{s^{n-1}}\right) \, \mathrm{d}s,
\end{equation}
where $g(x,\cdot)$ is the right-hand derivative of $G(x,\cdot)$.\\
For our potential estimates, the major difficulty is that the function $G(\cdot,t)$ is just measurable, which does not allow us to choose test functions containing $ G(\cdot,t) $. The idea that makes the proof possible is using the upper envelope function $G^+$ and lower $G^-$ and the condition $(A_{1,n})$ (see section $2$) to link these two functions. Note that this condition plays the role of the logarithmic Hölder continuity in the variable exponent case. Using the Lorentz norm, Harnack estimates, and other techniques, we establish pointwise estimates for supersolutions of such equations in terms of the Wolff potential $(1.7)$, which also gives us another approach different from the previous methods.\\
Our main result is the following.
\begin{Th}
Let $G(\cdot) \in \Phi(\Omega)$ satisfies $(SC)$ and $(A_0)$. Let $u$ be a nonnegative supersolution to $(1.1)$ in $B_{2R} = B(x_0, 2R)$ and  $\mu$ is the associated Radon measure of $u$. If $u$ is lower semicontinuous at $x_0$ and that one of the following holds
\begin{enumerate}
\item[(1)] $G(\cdot)$ satisfies $(A_{1,s_*})$ and $\norme{u}_{L^s(B_{2R})} \leq d$, where $s_*= \frac{ns}{n+s}$ and  $s > \max\{\frac{n}{g_0},1\}(g^0-g_0)$.
\item[(2)] $G(\cdot)$ satisfies $(A_{1})$, $\norme{u}_{W^{1,G(\cdot)}(B_{2R})} \leq d$ and $\frac{ng_0}{n-g_0} > g^0$.
\end{enumerate} 
Then there exists a positive constant $C$ such that
\begin{equation}
\frac{1}{C} W^{\mu}_{G(\cdot)}(x_0,R) - 2R \leq u(x_0) \leq C\left(R + \inf_{B}u +  W^{\mu}_{G(\cdot)}(x_0,2R) \right).
\end{equation}
\end{Th}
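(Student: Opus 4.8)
The plan is to prove the two inequalities in (1.9) separately, following the Kilpeläinen--Malý scheme but replacing the homogeneous $p$-structure by the generalized $\Phi$-function $G(\cdot)$ and controlling the $x$-dependence through the envelopes $G^+,G^-$ and the condition $(A_{1,s_*})$ (resp.\ $(A_1)$).

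\textbf{Step 1: Lower bound.} First I would prove the left-hand inequality
$$\frac{1}{C}\,W^{\mu}_{G(\cdot)}(x_0,R) - 2R \le u(x_0).$$
The idea is the classical iteration on a geometric sequence of radii $R_j = 2^{-j}R$ and the numbers $a_j = \inf_{B(x_0,R_j)} u$. Using $u$ as (roughly) a test function on dyadic annuli together with the Caccioppoli / energy estimate for supersolutions of (1.1), one obtains a lower bound of the form
$$a_{j+1} - a_j \ \gtrsim \ g^{-1}\!\Big(x_0,\frac{\mu(B(x_0,R_j))}{R_j^{\,n-1}}\Big)\,R_j \ - \ C R_j,$$
the subtracted $CR_j$ being the price paid for passing between $G^+$ and $G^-$ via $(A_{1,n})$ / $(A_0)$. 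Summing over $j$, using lower semicontinuity of $u$ at $x_0$ so that $a_j \to u(x_0)$, and comparing the resulting sum with the integral defining $W^{\mu}_{G(\cdot)}$ (the $\mathrm{d}s/s$ integral is comparable to the dyadic sum because $s\mapsto g^{-1}(x_0,\mu(B(x_0,s))/s^{n-1})$ is, up to the doubling of $\mu$ on the relevant scales, slowly varying), yields the claim with the harmless additive $-2R$. The delicate point here is that the Caccioppoli inequality naturally produces $G^-$ or $G^+$ rather than the pointwise value $G(x_0,\cdot)$; the hypotheses $(A_0)$ and $(A_{1,s_*})$ (which on the scale $R\le 1$ give $G^+(t)\le G^-(Kt)+$ small, exactly as log-Hölder continuity does for $p(x)$) are used precisely to convert between them at the cost of the linear term.

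\textbf{Step 2: Upper bound.} For the right-hand inequality I would use the De Giorgi / Kilpeläinen--Malý truncation argument: set $u_k = \min\{u,k\}$ or work with the truncations $(u - \lambda)_-$, derive the Caccioppoli estimate and the Sobolev--Poincaré inequality in the generalized Orlicz--Sobolev space $W^{1,G(\cdot)}$, and run the De Giorgi iteration to get, for a suitable choice of levels, a pointwise bound
$$u(x_0) \ \le \ C\,\inf_{B(x_0,\rho)} u \ + \ C\sum_{j\ge 0} g^{-1}\!\Big(x_0,\frac{\mu(B(x_0,2^{-j}R))}{(2^{-j}R)^{\,n-1}}\Big)\,2^{-j}R \ + \ CR.$$
The weak Harnack inequality for supersolutions (used as the crucial tool, as in Trudinger--Wang and as signalled in the introduction) supplies the $\inf_B u$ term and lets one absorb the "good" part of the iteration; the Sobolev embedding is where the hypotheses $s > \max\{n/g_0,1\}(g^0-g_0)$ in case (1), and $ng_0/(n-g_0) > g^0$ in case (2), enter — they guarantee enough integrability of $u$ (respectively in $L^s$ and in $W^{1,G(\cdot)}$) for the iteration to close and for the $G^\pm$-exchange to produce only the additive $R$. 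Converting the dyadic sum back to the integral $W^{\mu}_{G(\cdot)}(x_0,2R)$ finishes the estimate.

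\textbf{Main obstacle.} I expect the hardest part to be controlling the measurable $x$-dependence: because $G(\cdot,t)$ is only measurable in $x$ one cannot insert $G(\cdot,t)$ into test functions, so every energy estimate must be phrased with the continuous envelopes $G^\pm$, and then re-closed using $(A_{1,s_*})$/$(A_1)$ and $(A_0)$. Making the iteration of Steps 1--2 self-improving \emph{despite} this envelope loss — i.e., showing that the accumulated errors from swapping $G^+ \leftrightarrow G^-$ at all dyadic scales sum to at most a constant times $R$ rather than blowing up — is the technical heart, and is exactly where the precise numerology on $s$, $g_0$, $g^0$ is needed. A secondary difficulty is establishing the comparability of the dyadic sums with the continuous Wolff integral (1.7) uniformly in the measure $\mu$, which requires a doubling-type argument on the scales where $\mu$ is not too degenerate, together with the usual splitting of the integral over the ranges where $\mu(B(x_0,s))/s^{n-1}$ is large or small.
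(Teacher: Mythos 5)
Your Step 1 is essentially the paper's own proof of the lower bound (Theorem 5.6): one tests the equation with $v\eta^{g^0}$, $v=\min\{u,\inf_B u\}-\inf_{2B}u+R$, uses the Caccioppoli estimate (Lemma 5.5), passes from $G^+$ to $G(x_0,\cdot)$ by $(A_{1,n})$ and $(A_0)$, and iterates over $R_j=2^{1-j}R$, telescoping and invoking lower semicontinuity. One correction: no doubling-type property of $\mu$ is needed to compare the dyadic sum with the integral in $(1.7)$; monotonicity of $s\mapsto\mu(B(x_0,s))$ and of $g^{-1}(x_0,\cdot)$, together with the $(SC)$ power bounds $(2.2)$--$(2.3)$ applied via Remark 2.2, already give comparability on each dyadic interval, uniformly in $\mu$.

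For the upper bound your route (a direct De Giorgi / Kilpel\"ainen--Mal\'y truncation-and-level iteration on $u$) is genuinely different from the paper's, and as written it has a gap: you never specify how the measure enters the truncation estimates nor how the iteration closes, and in the merely measurable-in-$x$ setting that is precisely the difficulty the paper works around. The paper (Theorem 5.10) instead builds the $\mathcal{A}$-harmonic comparison function $v$ on the annulus $A=B\setminus\frac{1}{2}\overline{B}$ with $v-u\in W^{1,G(\cdot)}_0(A)$, proves the key bound $\int_B \mathcal{A}(x,\nabla v)\cdot\nabla\Psi\,\mathrm{d}x\le 2\mu(\overline{B})$, applies a Sobolev inequality to $G^-$ of the truncations $(v^+-l)^+$ to obtain a weak-type estimate in the Lorentz space $L^{\chi^\prime,\infty}$ with $\chi^\prime=\frac{n(g_0-1)}{n-1}$, and then iterates over dyadic balls with levels $l_j$ defined through the Lorentz quasi-norm and a smallness parameter $\delta$; lower semicontinuity gives $u(x_0)\le C\bigl(R+\bigl(\dashint_{B\setminus\frac{1}{2}B}u^{\gamma}\,\mathrm{d}x\bigr)^{1/\gamma}+W^{\mu}_{G(\cdot)}(x_0,2R)\bigr)$, and only at the very end does the weak Harnack inequality (Lemma 3.4) convert the annulus mean into $C(\inf_B u+R)$. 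This also pins down the role of the numerology you attribute to a Sobolev embedding inside the iteration: the hypotheses $s>\max\{\frac{n}{g_0},1\}(g^0-g_0)$ in (1) and $\frac{ng_0}{n-g_0}>g^0$ in (2) are there (as the paper states) to guarantee the weak Harnack inequality up to the exponent $\gamma_0>\chi^\prime$, so that $u\in L^{\chi^\prime,\infty}$ and the mean with an admissible $\gamma$ can be absorbed. If you want to keep your KM-type scheme, you must supply the analogue of the paper's inequalities $(5.2)$ and $(5.6)$ --- a quantitative bound of a level-set (or weak Lorentz) quantity of $u$ on $\frac{1}{2}B$ by the annulus mean plus $R\,g^{-1}\bigl(x_0,\mu(\overline{B})/R^{n-1}\bigr)$ --- since that estimate, obtained via the annulus comparison function, is exactly what makes the $G^{\pm}$-exchange losses accumulate to only $CR$ across all scales.
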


Noting that, the conditions $(1)$ and $(2)$ are added to ensure the existence of weak Harnack inequality (see $\cite{ref2}$).\\

As an application of the Wolff potential estimates and under an assumption on the growth order of the measure $\mu$, we prove that the supersolutions satisfy a Harnack inequality and local Hölder continuity (see Theorem $5.14$ and Corollary $5.15$).\\

The paper is organized as follows. In Section $2$,  we give some properties of generalized $\Phi$-functions and Musielak-Orlicz-Sobolev spaces. In Section $3$,  we introduce weak solutions and the weak comparison principle. In Section $4$,  we use the monotone operator's theory to prove solutions to the Dirichlet problem with Sobolev Boundary values. In Section $5$, we establish lower and upper pointwise estimates for supersolutions in terms of the  Wolff potential defined by $(1.7)$. Finally, we prove the local Hölder continuity for supersolutions.

\section{Preliminaries}
We briefly introduce our assumptions. More information about, generalized $\Phi$-functions and Musielak-Orlicz-Sobolev spaces, can be found in J. Musielak monograph $\cite{ref25}$ and P. Harjulehto, P. H\"ast\"o monograph $\cite{ref9}$. We denote,  $\Omega$  a bounded domain of  $\R^n$ with $n \geq 2$,  $L^0(\Omega)$ the set of measurable functions on $\Omega$. $C$ is a generic constant whose value may change between appearances.

\begin{df}
A function $G: \Omega \times [0,\infty) \rightarrow [0,\infty]$ is called a generalized $\Phi$-function, denoted by $G(\cdot) \in \Phi(\Omega)$, if the following conditions hold
\begin{itemize}
\item[•] For each $t \in [0,\infty)$, the function $G(\cdot,t)$ is measurable.
\item[•] For a.e $x \in \Omega$, the function $G(x, \cdot)$ is an $\Phi$-function, i.e.
\begin{enumerate}
\item $G(x,0) =  \displaystyle \lim\limits_{t \rightarrow 0^+} G(x,t) = 0$ and $\displaystyle \lim\limits_{t \rightarrow \infty} G(x,t) = \infty$;
\item $G(x,\cdot)$ is increasing and convex.
\end{enumerate}
\end{itemize}
\end{df}
Note that, a generalized $\Phi$-function can be represented as
$$ G(x,t) =  \displaystyle \int_{0}^t g(x,s) \, \mathrm{d}s,$$
where $g(x,\cdot)$ is the right-hand derivative of $G(x,\cdot)$. Furthermore for each $x \in \Omega$, the function $g(x,\cdot)$ is right-continuous and nondecreasing, so we have the following inequality
\begin{equation}
g(x,a)b \leq g(x,a)a + g(x,b)b \; \; \text{for} \, x \in \Omega \; \text{and} \; a,b \geq 0
\end{equation}
We denote $G^+_{B}(t) := \sup_B G(x,t), \; G^-_B(t) := \inf_B G(x,t)$.  We say that  $G(\cdot)$ satisfies\\
$ (SC)$ If there exist two constants  $g_0,g^0 > 1$ such that,
$$ 1 < g_0 \leq \displaystyle \frac{t g(x,t)}{G(x,t)} \leq g^0.$$
$(A_0)$ If there exists a constant  $c_0 > 1$ such that,
$$ \displaystyle \frac{1}{c_0} \leq G(x,1) \leq c_0, \, \; \text{a.e} \; \, x \in \Omega.$$
$(A_{1})$ If there exists a positive constant $C$ such that,for every Ball $B_R$ with $R < 1$ and $ x,y \in B_R \cap \Omega$, we have
$$ G(x,t) \leq CG(y,t) \; \; \; \text{when} \; \; G^-_{B}(t) \in \left[1 , \frac{1}{R^n}\right].$$
$(A_{1,s})$ If there exists a positive constant $C$ such that, for every Ball $B_R$ with $R < 1$ and $ x,y \in B_R \cap \Omega$, we have
$$ G(x,t) \leq CG(y,t) \; \; \; \text{when} \; \; t^s \in \left[1 , \frac{1}{R^n}\right].$$

\noindent  Under the structure condition $(SC)$, we have the following inequalities
\begin{equation}
\sigma^{g_0} G(x,t) \leq G(x,\sigma t) \leq \sigma^{g^0} G(x,t), \; \;\text{for} \; x \in \Omega, \; \, t \geq 0 \; \text{and} \;\sigma \geq 1.
\end{equation}
\begin{equation}
\sigma^{g^0} G(x,t) \leq G(x,\sigma t) \leq \sigma^{g_0} G(x,t), \; \;\text{for} \; x \in \Omega, \; \, t \geq 0 \; \text{and} \;\sigma \leq 1.
\end{equation}
We define $G^*(\cdot)$ the conjugate $\Phi$-function of $G(\cdot)$, by
$$ G^*(x,s) := \sup_{t \geq 0}(st - G(x,t)), \; \, \; \text{for} \; x \in \Omega \; \text{and} \; s \geq 0.$$ 
Note that $G^*(\cdot)$ is also a generalized $\Phi$-function and can be represented as
$$ G^*(x,t) =  \displaystyle \int_{0}^t g^{-1}(x,s) \, \mathrm{d}s,$$
with $g^{-1}(x,s) : = \sup \{ t \geq 0 \; : \; g(x,t) \leq s \}$.\\
By this definition, $G(\cdot)$ and $G^*(\cdot)$ satisfy the following Young inequality 
$$ st \leq G(x,t) + G^*(x,s), \; \, \text{for} \; x \in \Omega \; \text{and} \; s,t \geq 0.$$
Furthermore, we have the equality if $s = g(x,t)$ or  $t = g^{-1}(x,s)$. So, if $G(\cdot)$ satisfies the condition $(SC)$, we have the following inequality
\begin{equation}
 G^*(x,g(x,t)) \leq (g^0-1)G(x,t), \; \forall x \in \Omega, t \geq 0.
\end{equation}

\begin{rma} If $G(\cdot) \in \Phi(\Omega)$ satisfies $(SC)$, then $G^*(\cdot)$ satisfies the structure condition: 
$$\displaystyle \frac{g^0}{g^0 - 1} \leq \displaystyle \frac{t g^{-1}(x,t)}{G^*(x,t)} \leq \frac{g_0}{g_0 - 1}.$$
\end{rma}

\begin{df}
We define the generalized Orlicz space, also called Musielak-Orlicz space, by
$$ L^{G(\cdot)}(\Omega) := \{ u \in L^0(\Omega) \; : \; \displaystyle \lim\limits_{\lambda \rightarrow 0}  \rho_{G(\cdot)}(\lambda|u|) = 0 \},$$
where $\rho_{G(\cdot)}(t) = \displaystyle \int_{\Omega} G(x,t) \, \mathrm{d}x$. If $G(\cdot)$ satisfies $(SC)$, then
$$ L^{G(\cdot)}(\Omega) = \{ u \in L^0(\Omega) \; : \; \rho_{G(\cdot)}(|u|) < \infty \}.$$
\end{df}

\noindent On the generalized Orlicz space, we define the following  norms\\
- Luxembourg norm $ \norme{u}_{G(\cdot)} = \inf \{ \lambda > 0 \; : \; \rho_{G(\cdot)}(\displaystyle \frac{u}{\lambda}) \leq 1 \}.$\\
- Orlicz norm $\norme{u}_{G(\cdot)}^0 = \sup \{ |\displaystyle \int_{\Omega} u(x)v(x) \, \mathrm{d}x| \; : \; v \in L^{G^*(\cdot)}(\Omega), \; \rho_{G^*(\cdot)}(v) \leq 1 \}.$\\
These norms are equivalent. Precisely, we have
$$\norme{u}_{G(\cdot)} \leq \norme{u}_{G(\cdot)}^0 \leq 2 \norme{u}_{G(\cdot)}.$$
Furthermore, by definition of Orlicz norm and Young inequality, we have
\begin{equation}
\norme{u}_{G(\cdot)} \leq \norme{u}_{G(\cdot)}^0 \leq \displaystyle \int_{\Omega} G(x,|u|) \, \mathrm{d}x + 1.
\end{equation}
\noindent The following proposition establishes properties of convergent sequences in generalized Orlicz spaces.

\begin{pro}
Let $G(\cdot) \in \Phi(\Omega)$ satisfies $(SC)$. For any sequence $(u_i) \in L^{G(\cdot)}(\Omega)$, we have the following properties
\begin{enumerate}
\item Fatou lemma: If $u_i \rightarrow u$ almost everywhere, then 
$$\displaystyle \int_{\Omega} G(x,|u(x)|) \, \mathrm{d}x \leq  \varliminf_{i \rightarrow \infty} \int_{\Omega} G(x,|u_i(x)|) \, \mathrm{d}x.$$
\item $\norme{u_i}_{G(\cdot)} \rightarrow 0  \; \; (\text{resp.} 1 ;\infty) \Longleftrightarrow \displaystyle \int_{\Omega} G(x,|u_i(x)|) \, \mathrm{d}x \rightarrow 0  \; \; (\text{resp.} 1 ;\infty).$
\item The functions $G(\cdot)$ and $G^*(\cdot)$ satisfy the H\"older inequality
$$ \left| \displaystyle \int_{\Omega} u(x)v(x) \, \mathrm{d}x \right| \leq 2\norme{u}_{G(\cdot)} \norme{v}_{G^*(\cdot)}, \; \; \text{for} \; u \in L^{G(\cdot)}(\Omega) \; \text{and} \; v \in L^{G^*(\cdot)}(\Omega).$$
\end{enumerate}
\end{pro}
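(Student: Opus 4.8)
The plan is to prove the three statements by reducing each to the scaling property \eqref{} — more precisely to inequalities (2.2) and (2.3) guaranteed by $(SC)$ — which makes the modular $\rho_{G(\cdot)}$ behave like a ``doubling'' functional and hence forces the modular topology and the norm topology to coincide.

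For part (1), the Fatou lemma, I would argue as follows. Since $G(x,\cdot)$ is increasing and continuous for a.e.\ $x$ (continuity on $[0,\infty)$ follows from convexity together with the finiteness implied by $(SC)$), the map $t \mapsto G(x,t)$ is continuous, so $u_i(x) \to u(x)$ a.e.\ gives $G(x,|u_i(x)|) \to G(x,|u(x)|)$ a.e. Then the classical Fatou lemma for the nonnegative measurable functions $x \mapsto G(x,|u_i(x)|)$ yields exactly $\int_\Omega G(x,|u(x)|)\,dx \le \varliminf_i \int_\Omega G(x,|u_i(x)|)\,dx$. No structure condition beyond measurability of $G(\cdot,t)$ and a.e.\ continuity of $G(x,\cdot)$ is actually needed here; $(SC)$ is only used implicitly to know $G(x,\cdot)$ is real-valued.

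For part (2), the equivalence between norm convergence and modular convergence, the key is the two-sided power bound. Suppose $\norme{u_i}_{G(\cdot)} \to 0$. For $i$ large, $\lambda_i := \norme{u_i}_{G(\cdot)} < 1$, and by the definition of the Luxembourg norm (and right-continuity of the modular in the scaling parameter) $\rho_{G(\cdot)}(u_i/\lambda_i) \le 1$. Writing $|u_i| = \lambda_i \cdot (|u_i|/\lambda_i)$ with $\lambda_i \le 1$, inequality (2.3) gives pointwise $G(x,|u_i(x)|) \le \lambda_i^{g_0} G(x,|u_i(x)|/\lambda_i)$, hence $\rho_{G(\cdot)}(u_i) \le \lambda_i^{g_0}\rho_{G(\cdot)}(u_i/\lambda_i) \le \lambda_i^{g_0} \to 0$. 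Conversely, if $\rho_{G(\cdot)}(u_i) \to 0$, then for any fixed $\varepsilon \in (0,1)$, once $\rho_{G(\cdot)}(u_i) \le \varepsilon^{g^0}$ we use $|u_i|/\varepsilon = \varepsilon^{-1}|u_i|$ with $\varepsilon^{-1} \ge 1$ and inequality (2.2): $\rho_{G(\cdot)}(u_i/\varepsilon) \le \varepsilon^{-g^0}\rho_{G(\cdot)}(u_i) \le 1$, so $\norme{u_i}_{G(\cdot)} \le \varepsilon$; letting $\varepsilon \to 0$ gives $\norme{u_i}_{G(\cdot)} \to 0$. The cases of convergence to $1$ and to $\infty$ follow by the same scaling estimates: convergence of the norm to $1$ forces, for each $\delta>0$, $\rho_{G(\cdot)}((1-\delta)u_i)\le 1$ eventually and $\rho_{G(\cdot)}((1+\delta)u_i) > 1$ infinitely often, and then (2.2)--(2.3) squeeze $\rho_{G(\cdot)}(u_i)$ between $(1+\delta)^{-g^0}$ and $(1-\delta)^{-g_0}$ along the relevant indices; the case $\infty$ is analogous, using that $\norme{u_i}_{G(\cdot)} \ge M$ and $\sigma \ge 1$ give $\rho_{G(\cdot)}(u_i) \ge \rho_{G(\cdot)}(M\sigma^{-1}u_i)$-type bounds that blow up.

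For part (3), the Hölder inequality, I would start from Young's inequality $st \le G(x,t) + G^*(x,s)$ stated in the excerpt. First reduce to the normalized case $\norme{u}_{G(\cdot)} = \norme{v}_{G^*(\cdot)} = 1$ by homogeneity; by part (2) (applied to $G(\cdot)$ and, via Remark 2.2, to $G^*(\cdot)$, which also satisfies a structure condition hence $(SC)$-type bounds) the unit-norm elements satisfy $\rho_{G(\cdot)}(u) \le 1$ and $\rho_{G^*(\cdot)}(v) \le 1$. Then $\int_\Omega |u v|\,dx \le \int_\Omega G(x,|u|)\,dx + \int_\Omega G^*(x,|v|)\,dx \le 2$, which is the claimed bound with the constant $2$ in the normalized case; undoing the normalization gives $\left|\int_\Omega uv\,dx\right| \le 2\norme{u}_{G(\cdot)}\norme{v}_{G^*(\cdot)}$ for general $u,v$, with the degenerate cases (one of the norms zero or infinite) handled trivially.

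I expect the main obstacle to be the bookkeeping in part (2) around whether $\rho_{G(\cdot)}(u/\norme{u}_{G(\cdot)}) \le 1$ actually holds at the infimizing value — i.e.\ left/right-continuity of $\lambda \mapsto \rho_{G(\cdot)}(u/\lambda)$ — and the careful handling of the ``resp.\ $1;\infty$'' cases, which require choosing subsequences and applying (2.2)/(2.3) on the correct side; the Fatou and Hölder parts are essentially immediate from continuity of $G(x,\cdot)$ and Young's inequality respectively.
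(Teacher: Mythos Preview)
The paper does not actually prove this proposition; it is stated in Section~2 as a preliminary fact about generalized Orlicz spaces and is implicitly referred to the monographs \cite{ref25} and \cite{ref9} cited at the start of that section. Your argument is correct and is essentially the standard one found in those references: part~(1) is just the classical Fatou lemma combined with continuity of $G(x,\cdot)$; part~(2) follows from the power-type scaling bounds (2.2)--(2.3) guaranteed by $(SC)$ (your concern about $\rho_{G(\cdot)}(u/\norme{u}_{G(\cdot)})\le 1$ is resolved by left-continuity of $\lambda\mapsto\rho_{G(\cdot)}(u/\lambda)$, which holds since $G(x,\cdot)$ is finite and convex, hence continuous, and one can apply Fatou as $\lambda\downarrow\norme{u}_{G(\cdot)}$); and part~(3) is exactly the normalization-plus-Young argument. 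There is nothing to compare against in the paper itself.
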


\noindent The relation between a modular and its norm, under the structure condition $(SC)$, is given by the following proposition.

\begin{pro} Let $G(\cdot) \in \Phi(\Omega) $ satisfies $(SC)$. Then the following relations hold true
\begin{enumerate}
\item $\norme{u}_{G(\cdot)}^{g_0} \leq \rho_{G(\cdot)}(u) \leq \norme{u}_{G(\cdot)}^{g^0}, \; \forall u \in L^{G(\cdot)}(\Omega) \; \text{with} \; \norme{u}_{G(\cdot)} \geq 1.$
\item $\norme{u}_{G(\cdot)}^{g^0} \leq \rho_{G(\cdot)}(u) \leq \norme{u}_{G(\cdot)}^{g_0}, \; \forall u \in L^{G(\cdot)}(\Omega) \; \text{with} \; \norme{u}_{G(\cdot)} \leq 1.$
\end{enumerate}
\end{pro}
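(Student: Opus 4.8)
The target statement is the norm–modular relation (Proposition 2.5), which I would prove as follows.

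The plan is to read both inequalities off the definition of the Luxemburg norm together with the pointwise homogeneity-type bounds (2.2)--(2.3), and in particular to avoid needing any identity such as $\rho_{G(\cdot)}(u/\norme{u}_{G(\cdot)})=1$. If $u=0$ a.e.\ there is nothing to prove, so assume $u\neq 0$; then $(SC)$ forces $0<\lambda<\infty$, where $\lambda:=\norme{u}_{G(\cdot)}$, and since $t\mapsto G(x,t)$ is increasing the set $\{\mu>0:\rho_{G(\cdot)}(u/\mu)\le1\}$ is the half-line $[\lambda,\infty)$; hence $\rho_{G(\cdot)}(u/\mu)\le1$ whenever $\mu>\lambda$ and $\rho_{G(\cdot)}(u/\mu)\ge1$ whenever $0<\mu<\lambda$. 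The first step is to upgrade (2.2)--(2.3) to a modular statement: fix $\mu>0$, write $G\big(x,|u(x)|\big)=G\big(x,\mu\cdot|u(x)|/\mu\big)$, apply (2.2) if $\mu\ge1$ and (2.3) if $\mu\le1$ with $\sigma=\mu$ and $t=|u(x)|/\mu$, and integrate over $\Omega$; this gives
$$\mu^{a}\,\rho_{G(\cdot)}\!\big(\tfrac{u}{\mu}\big)\ \le\ \rho_{G(\cdot)}(u)\ \le\ \mu^{b}\,\rho_{G(\cdot)}\!\big(\tfrac{u}{\mu}\big),\qquad (a,b)=\begin{cases}(g_0,g^0),&\mu\ge1,\\(g^0,g_0),&\mu\le1,\end{cases}$$
with all quantities finite since $\rho_{G(\cdot)}(u)<\infty$.

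The second step is to send $\mu\to\lambda$ from the side permitted in each case. In case (1), $\lambda\ge1$: for $\mu>\lambda$ (so $\mu\ge1$) the right-hand inequality with $\rho_{G(\cdot)}(u/\mu)\le1$ gives $\rho_{G(\cdot)}(u)\le\mu^{g^0}$, and $\mu\downarrow\lambda$ yields $\rho_{G(\cdot)}(u)\le\lambda^{g^0}$; for $1\le\mu<\lambda$ the left-hand inequality with $\rho_{G(\cdot)}(u/\mu)\ge1$ gives $\rho_{G(\cdot)}(u)\ge\mu^{g_0}$, and $\mu\uparrow\lambda$ yields $\rho_{G(\cdot)}(u)\ge\lambda^{g_0}$; this is (1). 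In case (2), $\lambda\le1$: for $\lambda<\mu\le1$ the right-hand inequality gives $\rho_{G(\cdot)}(u)\le\mu^{g_0}$, whence $\rho_{G(\cdot)}(u)\le\lambda^{g_0}$ as $\mu\downarrow\lambda$; for $0<\mu<\lambda$ (so $\mu<1$) the left-hand inequality gives $\rho_{G(\cdot)}(u)\ge\mu^{g^0}$, whence $\rho_{G(\cdot)}(u)\ge\lambda^{g^0}$ as $\mu\uparrow\lambda$; this is (2) (at the endpoint $\lambda=1$, statements (1) and (2) coincide and both follow from these same one-sided limits). The only delicate point is this bookkeeping of the two regimes $\mu\gtrless1$ against $\lambda\gtrless1$ so that $g_0$ and $g^0$ end up on the correct sides of each inequality; $(SC)$ is used only as the source of (2.2)--(2.3) and of the finiteness $\lambda<\infty$, equivalently of $L^{G(\cdot)}(\Omega)=\{\rho_{G(\cdot)}<\infty\}$.
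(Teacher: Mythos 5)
The paper itself gives no proof of Proposition 2.5: it is quoted as a standard fact about generalized Orlicz spaces under the doubling-type condition $(SC)$ (see the Harjulehto--H\"ast\"o monograph cited as reference [9]), so there is no argument in the paper to compare against line by line. Your proof is correct. The modular scaling bounds you derive by integrating $(2.2)$--$(2.3)$ with $\sigma=\mu$, $t=|u|/\mu$ are exactly the standard mechanism, and your refinement of approaching $\lambda=\norme{u}_{G(\cdot)}$ only from the admissible side (using $\rho_{G(\cdot)}(u/\mu)\le 1$ for $\mu>\lambda$ and $\ge 1$ for $\mu<\lambda$, then passing to the limit in the resulting constant bounds) is a nice touch: it avoids invoking the unit-ball identity $\rho_{G(\cdot)}(u/\lambda)=1$, and hence any continuity or left-continuity of the modular, which the more common textbook proof uses. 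The bookkeeping of the regimes $\mu\gtrless 1$ versus $\lambda\gtrless 1$, including the endpoint $\lambda=1$, checks out. One cosmetic remark: your assertion that $\{\mu>0:\rho_{G(\cdot)}(u/\mu)\le 1\}$ equals the closed half-line $[\lambda,\infty)$ needs slightly more than monotonicity of $G(x,\cdot)$ (closedness at $\lambda$ would require left-continuity of the modular), but since you only use the two open-sided consequences, which follow directly from the definition of the infimum and monotonicity, this over-claim is harmless and could simply be dropped.
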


\begin{df}
We define the generalized Orlicz-Sobolev space by
$$ W^{1,G(\cdot)}(\Omega) := \{ u \in L^{G(\cdot)}(\Omega) \; : \; |\nabla u| \in L^{G(\cdot)}(\Omega), \; \, \text{in the distribution sense} \},$$
equipped with the norm
$$ \norme{u}_{1,G(\cdot)} = \norme{u}_{G(\cdot)} + \norme{\nabla u}_{G(\cdot)}.$$
\end{df}

\begin{rma}
The proposition $2.5$ remains true for the norm of the generalized Sobolev-Orlicz spaces.
\end{rma}

\begin{df}
 $ W^{1,G(\cdot)}_0(\Omega)$ is the closure of $C^\infty_0(\Omega)$ in $W^{1,G(\cdot)}(\Omega).$
\end{df}

Note that, if  $G(\cdot) \in \Phi(\Omega)$ satisfies the condition $(SC)$ and $(A_0)$, then $ W^{1,G(\cdot)}(\Omega)$ is a Banach, separable and reflexive space.

\section{Quasilinear elliptic equations}
Let $\mathcal{A} : \Omega\times\R^n \rightarrow \R^n$ be a function satisfying the following assumptions:
\begin{enumerate}
\item[] $a_1)$ $(x,\xi) \rightarrow \mathcal{A}(x,\xi)$ is a Carathéodory function.
\item[] $a_2)$ There exists a positive constant $c_1$ such that $$\mathcal{A}(x,\xi) \cdot \xi \geq c_1 g(x,|\xi|)|\xi|, \; \, \text{for} \; x \in \Omega \; \text{and} \; \xi \in \R^n.$$
\item[] $a_3)$ There exists a positive constant $c_2$ such that $$|\mathcal{A}(x,\xi)| \leq c_2g(x,|\xi|), \; \, \text{for} \; x \in \Omega \; \text{and} \; \xi \in \R^n.$$
\item[] $a_4)$ $(\mathcal{A}(x,\xi_1)- \mathcal{A}(x,\xi_2))\cdot(\xi_1-\xi_2) > 0, \; \, \text{for} \; x \in \Omega \; \text{and} \; \xi_1, \xi_2 \in \R^n$ with $\xi_1 \neq \xi_2.$
 \end{enumerate}

Under the previous conditions, we consider the following quasilinear elliptic equation.
\begin{equation}
 - \text{div} \mathcal{A}(x,\nabla u) = 0.
\end{equation}

\begin{df}
A function $u \in W^{1,G(\cdot)}(\Omega)$ is a solution to equation $(3.1)$ in $\Omega$ if
$$ \int_{\Omega} \mathcal{A}(x,\nabla u) \cdot \nabla \varphi \, \mathrm{d}x = 0$$
whenever  $\varphi \in C_0^\infty(\Omega)$.
\end{df}

\begin{df}
A function $u \in W^{1,G(\cdot)}(\Omega)$ is a supersolution (resp, subsolution) to equation $(3.1)$ in $\Omega$ if
$$ \int_{\Omega} \mathcal{A}(x,\nabla u) \cdot \nabla \varphi \, \mathrm{d}x \geq 0 \; \; (\text{resp,} \leq 0),$$
whenever  $\varphi \in C_0^\infty(\Omega)$ nonnegative.
\end{df}

By Giorgi–Nash-Moser theory for solutions to equation $(3.1)$ (see $\cite{ref2,ref3,ref11}$), we have the following Harnack estimates.

\begin{lem}
Let $G(\cdot) \in \Phi(\Omega)$ satisfies $(SC)$, $(A_0)$ and $(A_{1,n})$. If $u \in L^\infty(B_{2R})$ is a subsolution to equation $(3.1)$ in $B_{2R}$, then for any $q > 0$, there is a positive constant $C = C(q,c_1,c_2,g_0,g^0,\beta,n,\norme{u}_{\infty, B})$ such that
$$\esssup_{B} u^+ \; \leq C\left(\displaystyle \dashint_{B_{2R}} \overline{u}^q \, \mathrm{d}x \right)^{\frac{1}{q}},$$
with $\overline{u} = u^+ + R$.
\end{lem}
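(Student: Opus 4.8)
The plan is to run a Moser iteration tailored to generalized Orlicz growth, keeping $G(x,\cdot)$ frozen in its second variable throughout (so that only measurability in $x$ is used), controlling the envelopes $G^-_{B_{2R}}$, $G^+_{B_{2R}}$ by powers of $t$ on the relevant ranges by means of $(SC)$, $(A_0)$, $(A_{1,n})$ and the scaling relations $(2.2)$--$(2.3)$, and absorbing the a priori bound $M:=\norme{u}_{\infty,B_{2R}}$ into the constants. Fix $B_{2R}\subset\subset\Omega$, write $B=B_R$, and set $\overline{u}=u^++R$; since $u\in L^\infty$ all relevant powers of $\overline{u}$ are admissible test data. \emph{Step 1 (Caccioppoli estimate).} For $\beta\ge0$ and a cutoff $\eta\in C_0^\infty(B_{2R})$ with $0\le\eta\le1$, test the subsolution inequality against $\varphi=\eta^{g^0}\overline{u}^{\,\beta}\ge0$. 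Bounding the two resulting terms by $a_2)$ from below and $a_3)$ from above, splitting off the cross term with the elementary inequality $(2.1)$ and Young's inequality for $(G,G^*)$ in the sharp form $(2.5)$ (the scale-invariance of $(2.2)$--$(2.3)$ and their analogues for $G^*$ recorded in Remark $2.2$ being what keeps the dependence on $\beta$ polynomial here), absorbing, and rearranging powers of $\overline{u}$ via $(2.2)$--$(2.3)$ and the relation $t\,g(x,t)\approx G(x,t)$ from $(SC)$, one obtains — up to lower-order terms supported where $|\nabla\overline{u}|\le1$ — a Caccioppoli-type inequality of the shape
$$\int_{B_{2R}}\eta^{g^0}\,\overline{u}^{\,\beta}\,G\!\big(x,|\nabla\overline{u}|\big)\,dx\ \le\ C\,(\beta+1)^{g^0}\int_{B_{2R}}|\nabla\eta|^{g^0}\,\overline{u}^{\,\beta}\,G\!\big(x,\overline{u}\big)\,dx .$$

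\emph{Step 2 (freezing and Sobolev embedding).} By $(SC)$ and $(A_0)$ one has $G(x,t)\ge G^-_{B_{2R}}(t)\ge t^{g_0}/c_0$ and $G(x,t)\le G^+_{B_{2R}}(t)\le c_0\,t^{g^0}$ for $t\ge1$; hence, using $\overline{u}\le M$, on $\{\overline{u}\ge1\}$ we get $G(x,\overline{u})\le c_0\,M^{\,g^0-g_0}\,\overline{u}^{\,g_0}$ (on $\{\overline{u}\le1\}$ the right side of Step 1 is bounded by $c_0$, a lower-order term), and this single factor $M^{\,g^0-g_0}$ is the only place where the dependence on $\norme{u}_{\infty,B}$ enters — it is finite exactly by the a priori bound, and $(A_{1,n})$ is what legitimizes the transfer between the envelopes at the scales in play. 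Writing $\gamma:=\beta+g_0\ (\ge g_0)$ and $v:=\overline{u}^{\,\gamma/g_0}$, so that $|\nabla v|^{g_0}\approx(\gamma/g_0)^{g_0}\,\overline{u}^{\,\beta}\,|\nabla\overline{u}|^{g_0}$ and $v^{g_0}=\overline{u}^{\,\gamma}$, Step 1 becomes
$$\int_{B_{2R}}\eta^{g^0}\,|\nabla v|^{g_0}\,dx\ \le\ C\,\gamma^{\,g^0}\,M^{\,g^0-g_0}\int_{B_{2R}}\big(|\nabla\eta|^{g^0}+1\big)\,v^{g_0}\,dx .$$
Replacing $\eta$ by an appropriate power of itself and invoking the classical Sobolev inequality $W_0^{1,g_0}(B_r)\hookrightarrow L^{g_0^*}(B_r)$ (valid since $g_0>1$; if $g_0\ge n$ take any fixed exponent $>g_0$ in place of $g_0^*=\tfrac{ng_0}{n-g_0}$) gives, with $\chi:=g_0^*/g_0>1$ and concentric balls $B_{r'}\subset B_r\subset B_{2R}$,
$$\Big(\dashint_{B_{r'}}\overline{u}^{\,\chi\gamma}\,dx\Big)^{1/\chi}\ \le\ C_0\,\gamma^{\,\kappa}\,M^{\,g^0-g_0}\Big(\frac{R}{r-r'}\Big)^{\kappa}\dashint_{B_{r}}\overline{u}^{\,\gamma}\,dx$$
for some $C_0,\kappa$ depending only on $n,c_0,c_1,c_2,g_0,g^0$ and the $(A_{1,n})$-constant.

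\emph{Step 3 (iteration) and Step 4 (all $q>0$).} First let $q\ge g_0$: set $\gamma_0=q$, $\gamma_j=\chi^{\,j}\gamma_0$, $r_j=R(1+2^{-j})$, so $r_j\downarrow R$ and $R/(r_j-r_{j+1})\simeq2^{\,j+1}$; iterating the last display and raising to the power $1/\gamma_0$, the resulting constant is the convergent product $C\prod_{j\ge0}\big(C_0\,\gamma_j^{\,\kappa}\,M^{\,g^0-g_0}\,2^{\kappa(j+1)}\big)^{\chi^{-j}/\gamma_0}$ — each factor carries the exponent $\chi^{-j}$ and $\sum_j j\,\chi^{-j}<\infty$, so neither the $\gamma_j$'s nor the factors $M^{\,g^0-g_0}$ compound — while $\big(\dashint_{B_{r_j}}\overline{u}^{\,\gamma_j}\big)^{1/\gamma_j}\to\esssup_B\overline{u}$; this is the asserted bound with exponent $q$. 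For $0<q<g_0$ one uses the standard absorption trick: for $R\le\rho<\sigma\le2R$, estimate $\overline{u}^{\,g_0}\le(\esssup_{B_\sigma}\overline{u})^{\,g_0-q}\,\overline{u}^{\,q}$, apply the case already proved on $B_\rho$, and use Young's inequality to reach $\esssup_{B_\rho}\overline{u}\le\tfrac12\,\esssup_{B_\sigma}\overline{u}+C\,(\sigma-\rho)^{-n/q}\big(\int_{B_{2R}}\overline{u}^{\,q}\,dx\big)^{1/q}$; a routine iteration lemma in the variables $\rho,\sigma$ then deletes the first term. Since $\esssup_B u^+\le\esssup_B\overline{u}$, this yields the claim for every $q>0$.

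\emph{Main obstacle.} The technically delicate part is not any single estimate but the bookkeeping in Steps 1--2 that replaces the $x$-dependent modular $\int G(x,\cdot)$ by a fixed power $\int(\cdot)^{g_0}$ cheaply enough to run the classical Sobolev inequality while every constant stays dependent only on the admissible data. Two things must be arranged: the weight $\overline{u}^{\,\beta}$ must be kept outside $G$ throughout, so that $G$ is only ever evaluated at $\overline{u}$ (argument $\le M$) or at $|\nabla\overline{u}|$ — this confines the a priori bound to the single, $\beta$-independent factor $M^{\,g^0-g_0}$ and keeps it finite — and the cross term in the Caccioppoli step must be split via $(2.1)$ and $(2.5)$ in the scale-invariant fashion supplied by $(SC)$, so that the dependence on the iteration exponent $\beta$ comes out polynomial rather than exponential; this polynomial growth is exactly what makes the infinite product in Step 3 converge.
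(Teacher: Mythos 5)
The paper never proves Lemma 3.3 itself: it is quoted from \cite{ref2,ref3,ref11}, where the De Giorgi--Moser iteration is run \emph{intrinsically} in the generalized Orlicz framework. There the Caccioppoli estimate is kept in the modular form $\int\eta^{g^0}G(x,|\nabla u|)\,\mathrm{d}x\lesssim\int G^{+}\bigl(\overline{u}/R\bigr)\,\mathrm{d}x$, and $(A_{1,n})$ (through Lemma 5.4) is used to pass between $G^{+}_{B}$ and $G^{-}_{B}$ for arguments $t=\overline{u}/R\in[1,(M+R)/R]$, after pulling out the factor $M+1$ by $(2.2)$ at the cost $(M+1)^{g^0}$; this is precisely where the $(A_{1,n})$-constant $\beta$ and $\norme{u}_{\infty,B}$ enter, and it is what makes the final constant uniform in $R$.

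This uniformity in $R$ is the genuine gap in your proposal. Your Steps 1--2 use only $(SC)$ and $(A_0)$, via $t^{g_0}/c_0\le G(x,t)\le c_0t^{g^0}$ for $t\ge1$; $(A_{1,n})$ is invoked only rhetorically and never actually applied. The price appears in the Sobolev step: the classical inequality contributes the factor $r^{g_0}$, while your frozen Caccioppoli right-hand side carries $|\nabla\eta|^{g^0}\approx(r-r')^{-g^0}$, and $r^{g_0}(r-r')^{-g^0}\approx\bigl(r/(r-r')\bigr)^{g_0}(r-r')^{-(g^0-g_0)}$. Equivalently, estimating $G\bigl(x,\overline{u}|\nabla\eta|\bigr)$ by powers yields $(\overline{u}|\nabla\eta|)^{g_0}\,(M|\nabla\eta|)^{g^0-g_0}$ with $|\nabla\eta|\approx2^{j}/R$. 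Hence every iteration step carries an extra factor of order $R^{-(g^0-g_0)}$; these do not cancel in the infinite product (they accumulate to a power $R^{-(g^0-g_0)\chi/((\chi-1)\gamma_0)}$), so whenever $g^0>g_0$ you obtain the estimate only with a constant $C=C(R)$ blowing up as $R\to0$. That is strictly weaker than the lemma, whose constant depends only on $q,c_1,c_2,g_0,g^0,\beta,n,\norme{u}_{\infty,B}$, and the $R$-uniformity is essential downstream: in Theorem 5.10 the resulting inequality is used at radii $R_j=2^{-j}R\to0$. To close the gap you must keep the argument of $G$ in the scale-invariant form $\overline{u}/R$ and use $(A_{1,n})$ on the range $[1,1/R]$ (after factoring out $M+1$ by $(2.2)$), i.e.\ do the envelope transfer you only allude to. Two smaller repairs: the test function $\eta^{g^0}\overline{u}^{\beta}$ does not vanish on $\{u\le0\}$, where $\nabla u$ is uncontrolled, so take $\eta^{g^0}\bigl(\overline{u}^{\beta}-R^{\beta}\bigr)$ (or use that $\max(u,0)$ is again a subsolution); and at the bottom exponent $\gamma=g_0$ your ``good'' term has coefficient $\beta=0$, so the iteration must start from some $q>g_0$, the remaining exponents being recovered by your absorption step.
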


In $\cite{ref2}$, we have the more general version of the weak Harnack inequality for unbounded supersolutions.

\begin{lem}
Let $G(\cdot) \in \Phi(\Omega)$ satisfies $(SC)$ and $(A_0)$ and $u$ be a nonnegative supersolution to $(1.1)$ in $B_{2R}$. Assume $u$ that one of the following holds
\begin{enumerate}
\item[(1)] $G(\cdot)$ satisfies $(A_{1,s_*})$ and $\norme{u}_{L^s(B_{2R})} \leq d$, where $s_*= \frac{ns}{n+s}$ and  $s \in [g^0-g_0 , \infty]$.
\item[(2)] $G(\cdot)$ satisfies $(A_{1})$ and $\norme{u}_{W^{1,G(\cdot)}(B_{2R})} \leq d$.
\end{enumerate}
Then there exist positive constants $\gamma$ and $C$ such that
$$ \left(\displaystyle \dashint_{B_{2R}} (u+R)^\gamma \, \mathrm{d}x \right)^{\frac{1}{\gamma}} \leq C\essinf_{B_R}(u+R)$$
If $(1)$ holds with $s > \max\{\frac{n}{g_0},1\}(g^0-g_0)$ or if $(2)$ holds with $\frac{ng_0}{n-g_0} > g^0$, then the weak Harnack inequality holds for any $\gamma < \gamma_0$ with
$$\gamma_0 := 
\begin{cases}
\frac{n(g_0-1)}{n-g_0} & \text{if} \; g_0 < n \\
\infty & \text{if} \; g_0 \geq n
\end{cases}$$
\end{lem}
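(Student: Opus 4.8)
This is a weak Harnack inequality, and the natural route is a Moser iteration adapted to the generalized Orlicz framework, run in three regimes --- positive powers, negative powers, and a logarithmic (John--Nirenberg) crossover --- whose outputs are then chained; throughout write $\bar u := u+R$, so that $\bar u\ge R>0$ on $B_{2R}$. For the \emph{Caccioppoli estimates} I would take a standard cutoff $\eta\in C_0^\infty(B_{2R})$ and, for $\beta\in\R\setminus\{0\}$, test the supersolution inequality $\int\mathcal{A}(x,\nabla u)\cdot\nabla\varphi\,\mathrm{d}x\ge 0$ against $\varphi=\eta^{g^0}\bar u^{\beta}$: for $\beta>0$ one first replaces $u$ by the truncation $\min(u,k)$, proves the estimate uniformly in $k$, and passes to the limit via the Fatou lemma of Proposition~2.4, while for $\beta<0$ the factor $\bar u^{\beta}$ is already bounded. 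Using $a_2)$, $a_3)$, the pointwise inequality~(2.1), and the doubling relations~(2.2)--(2.3) to absorb the term carrying $|\nabla\eta|$, one obtains for each admissible $\beta$ (namely $\beta\in(1-g_0,0)$, $\beta<1-g_0$, or $\beta>0$ after truncation) an energy inequality bounding $\int\eta^{g^0}G(x,|\nabla\bar u^{\kappa}|)\,\mathrm{d}x$ by a constant multiple of $\int|\nabla\eta|^{g^0}G(x,\bar u^{\kappa})\,\mathrm{d}x$ plus lower-order terms, where $\kappa=\kappa(\beta)\ge 0$ is the power for which $\bar u^{\kappa}$ is the natural new unknown (with $\kappa=0$, i.e.\ $\bar u^{\kappa}=\log\bar u$, precisely at the borderline $\beta=1-g_0$).

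The next step is a \emph{Musielak--Orlicz Sobolev inequality and the resulting reverse-Hölder estimates}, and this is where $(A_0)$ and $(A_1)$ (resp.\ $(A_{1,s_*})$), together with $(SC)$, enter. Since $G(\cdot,t)$ is only measurable, I would sandwich it between the envelopes $G^-_{B}$ and $G^+_{B}$ and use $(A_1)$ (resp.\ $(A_{1,s_*})$) to compare them on the range of values effectively taken by $\bar u^{\kappa}$: the a priori bound $\norme{u}_{W^{1,G(\cdot)}(B_{2R})}\le d$ (resp.\ $\norme{u}_{L^s(B_{2R})}\le d$) is exactly what forces $\bar u$ into the regime $G^-_{B}(t)\in[1,R^{-n}]$ (resp.\ $t^{s}\in[1,R^{-n}]$) on which the comparison holds. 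Combined with a Sobolev--Poincaré inequality in $W^{1,G(\cdot)}$, whose gain exponent $\chi>1$ is comparable to $\tfrac{n}{n-g_0}$ when $g_0<n$ (and arbitrary when $g_0\ge n$), this turns the estimates above into reverse-Hölder inequalities of the shape
$$\Big(\dashint_{B_{r}}\bar u^{\,\chi q}\,\mathrm{d}x\Big)^{1/(\chi q)}\le\Big(\frac{C}{r'-r}\Big)^{\!\theta(q)}\Big(\dashint_{B_{r'}}\bar u^{\,q}\,\mathrm{d}x\Big)^{1/q},\qquad R\le r<r'\le 2R,$$
valid for positive $q$ in the range coming from the admissible $\beta$'s, and, in the same manner, for negative $q$.

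Then I would \emph{iterate and cross over}. Iterating the positive-exponent inequality over $q,\chi q,\chi^2q,\dots$, with a geometric choice of shrinking radii so that the accumulated constant remains finite, gives $\big(\dashint_{B_{R}}\bar u^{\,\gamma}\big)^{1/\gamma}\le C\big(\dashint_{B_{2R}}\bar u^{\,q_0}\big)^{1/q_0}$ for $0<q_0<\gamma<\gamma_0$; the quantitative hypothesis $\tfrac{ng_0}{n-g_0}>g^0$ in case~(2) --- resp.\ $s>\max\{\tfrac{n}{g_0},1\}(g^0-g_0)$ in case~(1) --- is precisely what keeps the admissible exponent range, after composition with the Orlicz--Sobolev embedding, open all the way up to $\gamma_0$, so that $\gamma$ may be taken arbitrarily close to $\gamma_0$; under only the weaker hypotheses the iteration still closes, but for some fixed $\gamma>0$ only. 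Iterating instead over negative exponents and letting them tend to $-\infty$, together with $\big(\dashint_{B_{R}}\bar u^{-m}\big)^{-1/m}\to\essinf_{B_R}\bar u$, yields $\essinf_{B_R}\bar u\ge c\big(\dashint_{B}\bar u^{-q_1}\big)^{-1/q_1}$ on an intermediate ball $B$. Finally, the borderline test function $\varphi=\eta^{g^0}\bar u^{\,1-g_0}$ gives $\dashint_{B}|\log\bar u-(\log\bar u)_{B}|\,\mathrm{d}x\le C$, i.e.\ $\log\bar u\in BMO$ locally, whence John--Nirenberg provides $\dashint_{B}\bar u^{\,q_2}\,\mathrm{d}x\cdot\dashint_{B}\bar u^{-q_2}\,\mathrm{d}x\le C$ for some small $q_2>0$. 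Chaining the positive-exponent iteration (down to exponent $q_2$), this crossover (from $q_2$ to $-q_2$), and the negative-exponent iteration (from $-q_2$ to $\essinf$) proves the claimed inequality, with constants depending only on $n,g_0,g^0,c_0,c_1,c_2$ and $d$.

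Everything outside the Sobolev step is a bookkeeping version of the classical De Giorgi--Nash--Moser scheme; the genuinely delicate point --- and the main obstacle --- is controlling the $x$-oscillation of $G$ through the envelopes $G^{\pm}_{B}$ and the conditions $(A_1)$/$(A_{1,s})$, and doing so \emph{uniformly along the iteration}, so that the Sobolev gain $\chi$ and the accumulated constants remain controlled in terms of the structural data and $d$. It is precisely the requirement that, at every step, one stays within the range where $(A_{1,s})$ or $(A_1)$ applies --- a range pinned down by the a priori bound $d$ --- that forces the thresholds $s>\max\{\tfrac{n}{g_0},1\}(g^0-g_0)$ and $\tfrac{ng_0}{n-g_0}>g^0$ required to reach all $\gamma<\gamma_0$.
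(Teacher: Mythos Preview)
The paper does not actually prove this lemma: it is quoted as an external result from reference~\cite{ref2} (Benyaiche, Harjulehto, H\"ast\"o, Karppinen), so there is no in-paper argument to compare against. Your sketch follows the standard Moser-iteration route adapted to generalized Orlicz growth---Caccioppoli estimates from the test functions $\eta^{g^0}\bar u^{\beta}$, an Orlicz--Sobolev step where the $x$-dependence of $G$ is frozen via the envelopes $G^{\pm}_B$ and the conditions $(A_1)$/$(A_{1,s_*})$, iteration of the resulting reverse H\"older inequalities in both signs of the exponent, and a John--Nirenberg crossover at the borderline $\beta=1-g_0$---which is indeed the strategy of~\cite{ref2}. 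You also correctly isolate the genuine difficulty: keeping the iterates inside the range on which $(A_1)$ or $(A_{1,s_*})$ applies, which is exactly what the a~priori bound $d$ and the thresholds on $s$ (resp.\ on $g_0,g^0$) are for.

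One minor bookkeeping point: in your positive-exponent step you write the outcome as $\big(\dashint_{B_{R}}\bar u^{\gamma}\big)^{1/\gamma}\le C\big(\dashint_{B_{2R}}\bar u^{q_0}\big)^{1/q_0}$, whereas the stated inequality has the $\gamma$-average on the \emph{larger} ball $B_{2R}$ and the essential infimum on $B_R$. The three pieces of the chain (upward iteration from $q_2$ to $\gamma$, John--Nirenberg from $q_2$ to $-q_2$, downward iteration from $-q_2$ to $-\infty$) therefore have to be nested on a sequence of concentric balls all lying inside $B_{2R}$, with the final infimum landing on $B_R$; this only affects the choice of intermediate radii, not the argument itself.
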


As in $\cite{ref3}$ or we take $s = \infty$ in the previous lemma, if $u$ is locally bounded we can replace condition $(1)$ by the condition $(A_{1,n})$.

\begin{lem}
Let $G(\cdot) \in \Phi(\Omega)$ satisfies $(SC)$, $(A_0)$ and $(A_{1,n})$. If $u \in L^\infty(B_{2R})$ is a supersolution to equation $(3.1)$ in $B_{2R}$, then there exist positive constants $\gamma$ and $C$ such that
$$ \left(\displaystyle \dashint_{B_{2R}} (u+R)^\gamma \, \mathrm{d}x \right)^{\frac{1}{\gamma}} \leq C\essinf_{B_R}(u+R)$$
\end{lem}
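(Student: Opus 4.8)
The plan is to obtain this bounded weak Harnack inequality as the limiting case $s=\infty$ of the preceding lemma (the weak Harnack inequality for unbounded supersolutions), which is precisely the route indicated by the sentence ``we take $s=\infty$ in the previous lemma''. The steps are: first, since $u\in L^\infty(B_{2R})$ we have $\norme{u}_{L^\infty(B_{2R})}=d<\infty$, so the integrability hypothesis in part (1) of that lemma is available with the choice $s=\infty$. Second, I check the exponent bookkeeping: with $s=\infty$ the Sobolev conjugate $s_*=\frac{ns}{n+s}$ equals $n$, so the assumption $(A_{1,s_*})$ in part (1) becomes exactly the hypothesis $(A_{1,n})$ standing here; moreover $s=\infty$ trivially satisfies $s>\max\{\tfrac{n}{g_0},1\}(g^0-g_0)$ because $g^0-g_0$ is finite under $(SC)$. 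Hence the sharpened conclusion applies and yields the weak Harnack inequality for every $\gamma<\gamma_0$, where $\gamma_0=\frac{n(g_0-1)}{n-g_0}$ if $g_0<n$ and $\gamma_0=\infty$ if $g_0\ge n$. Fixing any such $\gamma$ (for definiteness $\gamma=\tfrac12\gamma_0$ when $g_0<n$ and $\gamma=1$ when $g_0\ge n$) furnishes the constant $C$, whose dependence on $c_1,c_2,g_0,g^0,n$, the constant in $(A_{1,n})$, $R$ and $d=\norme{u}_{\infty,B_{2R}}$ is inherited from the previous lemma.

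Alternatively, the estimate can be proved directly by De Giorgi--Nash--Moser iteration for supersolutions, as in $\cite{ref3}$: one tests $(3.1)$ with $\varphi=\eta^{g^0}(u+R)^{\beta}$ for suitable small positive and for negative powers $\beta$, uses $a_2)$, $a_3)$ and the Young-type inequality $(2.1)$ to absorb the gradient terms into a Caccioppoli inequality, then runs Moser iteration separately on the positive- and negative-power ranges and bridges the gap at $\beta$ near $0$ by a logarithmic (John--Nirenberg) estimate. Because $u$ is assumed bounded, all relevant integrals are finite a priori, no truncation or approximation is needed, and the constants may legitimately depend on $\norme{u}_{\infty,B_{2R}}$; this is the reason the local boundedness hypothesis appears.

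In either route the one genuine difficulty is that $G(\cdot,t)$ is merely measurable in $x$, so one cannot insert test functions built from $G(\cdot,u)$ directly. This is handled exactly as elsewhere in the paper: on a fixed ball $B$ one squeezes $G(x,\cdot)$ between its envelopes $G^-_B(\cdot)\le G(x,\cdot)\le G^+_B(\cdot)$ and uses $(A_{1,n})$ together with $(A_0)$ and $(SC)$ to bound $G^+_B$ by a constant multiple of $G^-_B$ on the range of $t$ dictated by the Sobolev embedding and the bound $d$. In the reduction route this is entirely absorbed into the previous lemma, so the proof collapses to the exponent check above; I expect the only point requiring care to be the verification that $s=\infty$ is an admissible choice in part (1) of that lemma, in particular that $s_*\to n$ so that the required condition coincides with the standing hypothesis $(A_{1,n})$.
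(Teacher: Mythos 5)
Your proposal is correct and matches the paper's own justification, which is exactly the remark that one may take $s=\infty$ in the preceding weak Harnack lemma (so that $s_*=\frac{ns}{n+s}$ becomes $n$ and $(A_{1,s_*})$ reduces to $(A_{1,n})$, with $\norme{u}_{L^\infty(B_{2R})}=d$ supplying the integrability hypothesis), or alternatively to argue directly as in \cite{ref3}. Your exponent bookkeeping and the observation that only the existence of some $\gamma>0$ is needed are exactly the points the paper relies on.
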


Under the previous conditions $(a_1)$, $(a_2)$, $(a_3)$ and $(a_4)$, we consider the following quasilinear elliptic equation with data measure.
\begin{equation}
 - \text{div} \mathcal{A}(x,\nabla u) = \mu.
\end{equation}

\begin{df}
A function $u \in W^{1,G(\cdot)}(\Omega)$ is a solution of the equation $(3.2)$ in $\Omega$ if
$$ \int_{\Omega} \mathcal{A}(x,\nabla u) \cdot \nabla \varphi \, \mathrm{d}x = \int_{\Omega}  \varphi \, \mathrm{d}\mu,$$
whenever  $\varphi \in C_0^\infty(\Omega).$
\end{df}

From the density of $C_0^\infty(\Omega)$, the class of test functions can be extended to $W^{1,G(\cdot)}_0(\Omega)$  in $(3.2)$.

\begin{lem}
If  $u \in W^{1,G(\cdot)}(\Omega)$ is a solution of the equation $(3.2)$ in  $\Omega$, then
$$ \int_{\Omega} \mathcal{A}(x,\nabla u) \cdot \nabla \varphi \, \mathrm{d}x  = \int_{\Omega}  \varphi \, \mathrm{d}\mu,$$
whenever  $\varphi \in W^{1,G(\cdot)}_0(\Omega)$.
\end{lem}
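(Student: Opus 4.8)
The approach is a density argument: the identity defining a solution of $(3.2)$ holds for every $\varphi \in C_0^\infty(\Omega)$, and $W^{1,G(\cdot)}_0(\Omega)$ is by definition the closure of $C_0^\infty(\Omega)$ in $W^{1,G(\cdot)}(\Omega)$, so it is enough to show that both sides of the identity are stable along an approximating sequence. Concretely, given $\varphi \in W^{1,G(\cdot)}_0(\Omega)$, pick $\varphi_j \in C_0^\infty(\Omega)$ with $\varphi_j \to \varphi$ in $W^{1,G(\cdot)}(\Omega)$, that is $\norme{\varphi_j - \varphi}_{G(\cdot)} \to 0$ and $\norme{\nabla\varphi_j - \nabla\varphi}_{G(\cdot)} \to 0$; for each $j$ one has $\int_\Omega \mathcal{A}(x,\nabla u)\cdot\nabla\varphi_j \, \mathrm{d}x = \int_\Omega \varphi_j \, \mathrm{d}\mu$, and the task is to pass to the limit in $j$ on each side.

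For the left-hand side I would first check that $\mathcal{A}(\cdot,\nabla u) \in L^{G^*(\cdot)}(\Omega)$. By $(a_3)$ one has $|\mathcal{A}(x,\nabla u)| \le c_2\, g(x,|\nabla u|)$; absorbing the constant $c_2$ by the structure condition for $G^*(\cdot)$ (Remark 2.2) and then applying $(2.5)$ gives $\int_\Omega G^*\big(x,|\mathcal{A}(x,\nabla u)|\big) \, \mathrm{d}x \le C\int_\Omega G(x,|\nabla u|) \, \mathrm{d}x < \infty$, the last integral being finite because $|\nabla u| \in L^{G(\cdot)}(\Omega)$ and $G(\cdot)$ satisfies $(SC)$; hence $\mathcal{A}(\cdot,\nabla u) \in L^{G^*(\cdot)}(\Omega)$ with finite Luxembourg norm. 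The Hölder inequality of Proposition 2.4(3) then yields $\big|\int_\Omega \mathcal{A}(x,\nabla u)\cdot(\nabla\varphi_j - \nabla\varphi) \, \mathrm{d}x\big| \le 2\norme{\mathcal{A}(\cdot,\nabla u)}_{G^*(\cdot)}\,\norme{\nabla\varphi_j - \nabla\varphi}_{G(\cdot)} \to 0$, so that $\int_\Omega \mathcal{A}(x,\nabla u)\cdot\nabla\varphi_j \, \mathrm{d}x \to \int_\Omega \mathcal{A}(x,\nabla u)\cdot\nabla\varphi \, \mathrm{d}x$.

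The right-hand side is the delicate point, and I expect it to be the main obstacle. The convergence just established shows that the numerical sequence $\int_\Omega \varphi_j \, \mathrm{d}\mu$ converges and, more, that $\varphi \mapsto \int_\Omega \mathcal{A}(x,\nabla u)\cdot\nabla\varphi \, \mathrm{d}x$ is a bounded linear functional on $W^{1,G(\cdot)}_0(\Omega)$ coinciding with $\varphi \mapsto \int_\Omega \varphi \, \mathrm{d}\mu$ on $C_0^\infty(\Omega)$; in particular $\mu$ extends to an element of $\big(W^{1,G(\cdot)}_0(\Omega)\big)^*$ and does not charge sets of zero $G(\cdot)$-Sobolev capacity. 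The subtlety is that $\mu$ may be singular with respect to Lebesgue measure, so Lebesgue-a.e.\ convergence of the $\varphi_j$ does not suffice to identify $\lim_j \int_\Omega \varphi_j \, \mathrm{d}\mu$ with $\int_\Omega \varphi \, \mathrm{d}\mu$. I would resolve this by passing to a subsequence along which $\varphi_j$ converges in $G(\cdot)$-capacity, hence quasi-everywhere — and therefore $\mu$-a.e.\ — to the quasicontinuous representative $\tilde\varphi$ of $\varphi$, and then applying a dominated-convergence argument (via a maximal-function bound on $\sup_j|\varphi_j - \tilde\varphi|$, equivalently the continuity of the extended functional) to get $\int_\Omega \varphi_j \, \mathrm{d}\mu \to \int_\Omega \tilde\varphi \, \mathrm{d}\mu$, which is precisely the value assigned to $\int_\Omega \varphi \, \mathrm{d}\mu$. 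Combining the two limits gives the identity. If instead $\int_\Omega \varphi \, \mathrm{d}\mu$ is read from the start as the action of this extended dual functional, the statement follows at once from the left-hand side computation; either way the one thing requiring care is the meaning of, and the passage to the limit in, the measure term on the right.
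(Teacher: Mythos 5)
Your proposal is correct and takes essentially the same route as the paper, which proves the lemma only by the one-line remark that $C_0^\infty(\Omega)$ is dense in $W^{1,G(\cdot)}_0(\Omega)$, leaving all details unstated. Your verification of the left-hand limit via $(a_3)$, Remark 2.2, inequality $(2.4)$ and the H\"older inequality, and your observation that the measure term must either be read as the action of the extended dual functional (as the paper implicitly does, cf.\ Theorem 4.3 where $\mu \in \big(W^{1,G(\cdot)}_0(\Omega)\big)^*$) or be handled through quasicontinuous representatives, simply fills in what the paper omits.
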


By the monotone condition of $\mathcal{A}$, we have the following weak comparison principle $\cite{ref13}$.

\begin{lem}
Let $u,v \in W^{1,G(\cdot)}(\Omega)$. If
$$ \int_{\Omega} \mathcal{A}(x,\nabla u) \cdot \nabla \varphi \, \mathrm{d}x  \leq  \int_{\Omega} \mathcal{A}(x,\nabla v) \cdot \nabla \varphi \, \mathrm{d}x,$$
for all nonnegative $\varphi \in W^{1,G(\cdot)}_0(\Omega)$ and $(u-v)^+ \in W^{1,G(\cdot)}_0(\Omega)$, then $u \leq v$, a.e in $\Omega$.
\end{lem}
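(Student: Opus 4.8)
The natural strategy is the classical one for monotone operators: test the assumed inequality against $\varphi = (u-v)^+$ and exploit the strict monotonicity $a_4)$. First I would record that $(u-v)^+ \in W^{1,G(\cdot)}_0(\Omega)$ by hypothesis and that it is nonnegative, so it is an admissible test function; moreover the usual chain rule gives $\nabla (u-v)^+ = (\nabla u - \nabla v)\,\chi_{\{u>v\}}$ a.e.\ in $\Omega$. This last identity I would justify by approximating $(u-v)^+$ in $W^{1,G(\cdot)}(\Omega)$ through truncations, using that $G(\cdot)$ satisfies $(SC)$ and $(A_0)$ so that the space behaves well under such operations (this is essentially the content of Remark $2.7$ together with Proposition $2.5$).

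Next, plugging $\varphi=(u-v)^+$ into the assumed inequality and rearranging yields
\[
\int_{\Omega} \big(\mathcal{A}(x,\nabla u) - \mathcal{A}(x,\nabla v)\big)\cdot \nabla (u-v)^+ \, \mathrm{d}x \leq 0 ,
\]
and by the chain rule identity this integral equals
\[
\int_{\{u>v\}} \big(\mathcal{A}(x,\nabla u) - \mathcal{A}(x,\nabla v)\big)\cdot (\nabla u - \nabla v)\, \mathrm{d}x \leq 0 .
\]
By $a_4)$ the integrand is nonnegative, and strictly positive precisely where $\nabla u \neq \nabla v$; hence the inequality forces the integrand to vanish a.e.\ on $\{u>v\}$, so $\nabla u = \nabla v$ a.e.\ on that set. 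Consequently $\nabla (u-v)^+ = 0$ a.e.\ in $\Omega$.

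Finally I would close the argument by showing that a function $w \in W^{1,G(\cdot)}_0(\Omega)$ with $\nabla w = 0$ a.e.\ must vanish a.e.\ in $\Omega$. Since $G(\cdot)$ satisfies $(SC)$ and $(A_0)$, $W^{1,G(\cdot)}_0(\Omega)$ embeds into $W^{1,1}_0(\Omega)$, so $w$ has vanishing distributional gradient and is therefore a.e.\ constant on the connected domain $\Omega$; the zero boundary values (equivalently, a Sobolev–Poincaré inequality $\norme{w}_{G(\cdot)} \leq C\norme{\nabla w}_{G(\cdot)}$ valid on $W^{1,G(\cdot)}_0(\Omega)$) then force that constant to be $0$. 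Applying this to $w=(u-v)^+$ gives $(u-v)^+ = 0$ a.e., i.e.\ $u \leq v$ a.e.\ in $\Omega$. The only step beyond routine bookkeeping is this last one: one must invoke the chain rule / density of truncations in $W^{1,G(\cdot)}$ and a Poincaré-type inequality on $W^{1,G(\cdot)}_0(\Omega)$ in the Musielak–Orlicz framework. Both follow from $(SC)$ and $(A_0)$ and the separability/reflexivity noted at the end of Section $2$, but they should be cited or verified explicitly rather than taken for granted.
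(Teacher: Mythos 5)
Your argument is correct and is exactly the classical comparison-principle proof: the paper gives no proof of this lemma, merely citing \cite{ref13}, and the argument there is the same one you give (test with $\varphi=(u-v)^+$, use the strict monotonicity $a_4)$ to force $\nabla u=\nabla v$ a.e.\ on $\{u>v\}$, conclude from the zero boundary values). For the final step you do not need a Musielak--Orlicz Poincar\'e inequality: under $(SC)$ and $(A_0)$ one has $L^{G(\cdot)}(\Omega)\hookrightarrow L^{g_0}(\Omega)$ on the bounded set $\Omega$, hence $W^{1,G(\cdot)}_0(\Omega)\subset W^{1,g_0}_0(\Omega)$, and the classical Poincar\'e inequality there already gives $(u-v)^+=0$ a.e.
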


\section{Existence of solution}

After a preliminary list of lemmas, we use the monotone operator's theory to prove the existence of solutions of the Dirichlet problem to equation $(3.2)$ with Sobolev boundary values.\\
The next lemmas are proved in $\cite{ref4}$.

\begin{lem}
Let $G(\cdot) \in \Phi(\Omega)$ satisfies $(SC)$ and $(A_0)$. Let $(u_i)$ be a sequence in $L^{G(\cdot)}(\Omega)$. If $u_i \rightarrow u$ in $L^{G(\cdot)}(\Omega)$, then there exists a subsequence $(u_{i_j})$ of $(u_i)$ which converge to $u$, a.e in $\Omega$.
\end{lem}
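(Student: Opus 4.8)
The statement is the Orlicz--space analogue of the classical fact that $L^p$-convergence yields almost everywhere convergence along a subsequence, and the plan is to reproduce that argument, substituting the modular--norm relations of Proposition 2.5 and the Fatou property of Proposition 2.4 for the elementary $L^p$ estimates.

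First I would use $\norme{u_i-u}_{G(\cdot)}\to 0$ to extract a subsequence $(u_{i_j})$ with $\norme{u_{i_{j+1}}-u_{i_j}}_{G(\cdot)}\le 2^{-j}$ for every $j$, and set $v_k:=\sum_{j=1}^k|u_{i_{j+1}}-u_{i_j}|$ and $v:=\sum_{j=1}^\infty|u_{i_{j+1}}-u_{i_j}|\in[0,\infty]$. The triangle inequality for the Luxemburg norm gives $\norme{v_k}_{G(\cdot)}\le\sum_{j\ge 1}2^{-j}\le 1$, and since this norm is at most $1$, Proposition 2.5 yields $\rho_{G(\cdot)}(v_k)=\int_\Omega G(x,v_k)\,\mathrm{d}x\le\norme{v_k}_{G(\cdot)}^{g_0}\le 1$. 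Because $v_k\uparrow v$ pointwise, the Fatou lemma of Proposition 2.4 gives $\int_\Omega G(x,v)\,\mathrm{d}x\le\liminf_k\int_\Omega G(x,v_k)\,\mathrm{d}x\le 1$.

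Next I would invoke $(A_0)$ together with $(SC)$ to pass from the finiteness of this modular to $v<\infty$ a.e.: for $t\ge 1$ and a.e. $x$, inequality $(2.2)$ combined with $(A_0)$ gives $G(x,t)\ge t^{g_0}G(x,1)\ge c_0^{-1}t^{g_0}$, hence $c_0^{-1}\int_{\{v\ge 1\}}v^{g_0}\,\mathrm{d}x\le\int_\Omega G(x,v)\,\mathrm{d}x<\infty$, so $v<\infty$ a.e. on $\{v\ge 1\}$ (and trivially on $\{v<1\}$). Consequently $\sum_j|u_{i_{j+1}}(x)-u_{i_j}(x)|<\infty$ for a.e. $x$, so $(u_{i_j}(x))_j$ is a Cauchy sequence in $\R$ for a.e. $x$ and converges a.e. to a measurable function $\tilde u$.

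It then remains to identify $\tilde u$ with $u$. Since $u_{i_j}-u\to\tilde u-u$ a.e. and $\rho_{G(\cdot)}(u_{i_j}-u)\to 0$ (Proposition 2.4, part $2$, or Proposition 2.5 once $\norme{u_{i_j}-u}_{G(\cdot)}\le 1$), the Fatou lemma gives $\int_\Omega G(x,|\tilde u-u|)\,\mathrm{d}x\le\liminf_j\rho_{G(\cdot)}(u_{i_j}-u)=0$; as $(SC)$ forces $G(x,t)>0$ for every $t>0$ and a.e. $x$, we conclude $\tilde u=u$ a.e. The one point requiring care is that the modular $\rho_{G(\cdot)}$ is not subadditive, which is exactly why the geometric-series estimate has to be carried on the norm side and only afterwards transferred back through Proposition 2.5; this is also precisely where $(SC)$ (for the modular--norm comparison) and $(A_0)$ (for the pointwise lower bound $G(x,t)\gtrsim t^{g_0}$ when $t\ge 1$) are genuinely used. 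Apart from that, the argument is a routine transcription of the $L^p$ proof.
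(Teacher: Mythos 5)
Your proof is correct. Note, however, that the paper does not actually prove this lemma: it is quoted from \cite{ref4} (``The next lemmas are proved in \cite{ref4}''), so there is no in-paper argument to compare against; your Riesz--Fischer-type argument (rapidly convergent subsequence, telescoping series controlled on the norm side, transfer to the modular via Proposition 2.5, Fatou, then identification of the a.e.\ limit through $\rho_{G(\cdot)}(u_{i_j}-u)\to 0$) is a complete and standard substitute, and you correctly flag that the geometric-series step must be run on the Luxemburg norm since the modular is not subadditive. Two small remarks: the positivity $G(x,t)>0$ for $t>0$ invoked at the end really comes from $(A_0)$ combined with inequality $(2.3)$ (giving $G(x,t)\ge c_0^{-1}t^{g^0}$ for $t\le 1$) rather than from $(SC)$ alone; and the whole construction can be shortened by choosing the subsequence at the modular level directly: by Proposition 2.4(2) pick $i_j$ with $\rho_{G(\cdot)}(u_{i_j}-u)\le 2^{-j}$, so that $\sum_j G\bigl(x,|u_{i_j}(x)-u(x)|\bigr)<\infty$ a.e., hence $G\bigl(x,|u_{i_j}(x)-u(x)|\bigr)\to 0$ a.e., and the lower bound $G(x,t)\ge c_0^{-1}\min\{t^{g_0},t^{g^0}\}$ then yields $u_{i_j}\to u$ a.e., avoiding both the auxiliary series $v$ and the identification step.
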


\begin{lem}
Let $G(\cdot) \in \Phi(\Omega)$ satisfies $(SC)$ and $(A_0)$. Let $(u_i)$ be a bounded sequence in $L^{G(\cdot)}(\Omega)$. If $u_i \rightarrow u$, a.e in $\Omega$, then $u_i$ converge to $u$ weakly in $L^{G(\cdot)}(\Omega)$.
\end{lem}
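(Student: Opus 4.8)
The plan is to show that for every $v\in L^{G^*(\cdot)}(\Omega)$ one has $\int_\Omega u_i v\,\mathrm dx \to \int_\Omega uv\,\mathrm dx$; since every continuous linear functional on $L^{G(\cdot)}(\Omega)$ is represented by an element of $L^{G^*(\cdot)}(\Omega)$ when $G(\cdot)$ satisfies $(SC)$ and $(A_0)$ (so the space is reflexive, in particular its dual is the associate space), this gives weak convergence. Fix such a $v$ and, given $\varepsilon>0$, choose by absolute continuity of the integral a $\delta>0$ so that $\int_E G^*(x,|v|)\,\mathrm dx$ and hence (via the Hölder inequality of Proposition~2.4(3) together with (2.6) and Proposition~2.5) the contribution $\int_E u_i v\,\mathrm dx$ are small, uniformly in $i$, whenever $|E|<\delta$; uniformity in $i$ comes precisely from the hypothesis $\sup_i \norme{u_i}_{G(\cdot)}=:M<\infty$, because $\left|\int_E u_i v\right|\le 2\norme{u_i}_{G(\cdot)}\norme{v\mathbf 1_E}_{G^*(\cdot)}\le 2M\norme{v\mathbf 1_E}_{G^*(\cdot)}$ and the last norm is small by Proposition~2.5 applied to $G^*(\cdot)$.

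Next I would invoke Egorov's theorem: since $u_i\to u$ a.e.\ on $\Omega$ (first reducing to a set of finite measure by exhausting $\Omega$ and using the same uniform-small-tail estimate), there is a set $E$ with $|E|<\delta$ such that $u_i\to u$ uniformly on $\Omega\setminus E$. On $\Omega\setminus E$ we estimate $\left|\int_{\Omega\setminus E}(u_i-u)v\,\mathrm dx\right|\le \norme{u_i-u}_{L^\infty(\Omega\setminus E)}\int_\Omega|v|\,\mathrm dx$, and $\int_\Omega|v|\,\mathrm dx<\infty$ because $v\in L^{G^*(\cdot)}(\Omega)\subset L^1(\Omega)$ on the bounded domain $\Omega$ (again by Hölder against the constant $1$, which lies in $L^{G(\cdot)}(\Omega)$ thanks to $(A_0)$). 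Hence this term tends to $0$ as $i\to\infty$. Note that $u\in L^{G(\cdot)}(\Omega)$ is automatic: Proposition~2.4(1) (Fatou) gives $\int_\Omega G(x,|u|)\,\mathrm dx\le\varliminf_i\int_\Omega G(x,|u_i|)\,\mathrm dx\le \sup_i\max\{\norme{u_i}_{G(\cdot)}^{g_0},\norme{u_i}_{G(\cdot)}^{g^0}\}<\infty$ by Proposition~2.5, so the limit function belongs to the space and the weak-convergence statement is meaningful.

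Combining the two regions: for $i$ large, $\left|\int_\Omega(u_i-u)v\,\mathrm dx\right|\le \left|\int_E u_i v\right|+\left|\int_E uv\right|+\left|\int_{\Omega\setminus E}(u_i-u)v\right|\le 2M\norme{v\mathbf 1_E}_{G^*(\cdot)}+\norme{u\mathbf 1_E}_{G(\cdot)}\cdot(\text{const})+ o(1)$, and all three terms are $\le C\varepsilon$ by the choice of $\delta$ and by letting $i\to\infty$. Since $\varepsilon$ is arbitrary, $\int_\Omega u_i v\,\mathrm dx\to\int_\Omega uv\,\mathrm dx$, which is the claim.

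\textbf{Main obstacle.} The delicate point is the uniform absolute continuity of the integrals $\int_E u_i v\,\mathrm dx$ in $i$ together with the passage to finite measure / tail control; everything hinges on converting the modular bound $\sup_i\rho_{G(\cdot)}(u_i)<\infty$ into the norm bound $\sup_i\norme{u_i}_{G(\cdot)}<\infty$ (Proposition~2.5) and then using the Hölder inequality with the localized factor $v\mathbf 1_E$, whose $G^*(\cdot)$-norm is made small by the complementary part of Proposition~2.5 applied to $G^*(\cdot)$ (legitimate since Remark~2.2 guarantees $G^*(\cdot)$ also satisfies a structure condition). The rest is a routine Egorov argument.
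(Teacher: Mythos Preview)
The paper does not actually prove this lemma; it merely cites \cite{ref4} (``The next lemmas are proved in [4]''), so there is no in-paper proof to compare against line by line.

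Your argument is correct. The ingredients --- the duality $(L^{G(\cdot)})^*=L^{G^*(\cdot)}$ under $(SC)$ and $(A_0)$, Fatou to place $u$ in $L^{G(\cdot)}$, Egorov on the bounded domain $\Omega$, absolute continuity of $\rho_{G^*(\cdot)}(v\mathbf 1_E)$ combined with Proposition~2.5 (applied to $G^*(\cdot)$, which is legitimate by Remark~2.2) to make $\norme{v\mathbf 1_E}_{G^*(\cdot)}$ small, and H\"older for the bad set --- all fit together. One cosmetic remark: the reduction to finite measure is unnecessary here since $\Omega$ is assumed bounded in Section~2, so Egorov applies directly.

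A shorter route, and the one typically taken in this setting (and presumably in \cite{ref4}), is to use reflexivity of $L^{G(\cdot)}(\Omega)$ directly: from boundedness extract a weakly convergent subsequence $u_{i_j}\rightharpoonup w$; identify $w=u$ a.e.\ (e.g.\ via Mazur's lemma passing to convex combinations converging strongly, hence along a further subsequence a.e., so $w=u$); then conclude by the usual subsequence principle that the full sequence converges weakly to $u$. Your direct Egorov/Vitali argument trades the abstract compactness step for an explicit $\varepsilon$--$\delta$ estimate; both are standard and equally valid here.
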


\begin{Th}
Let $G(\cdot) \in \Phi(\Omega)$ satisfies $(SC)$ and $(A_0)$. Let $\mu$ be a signed Radon measure in $\left(W_0^{1,G(\cdot)}\Omega)\right)^*$ and $\theta \in W^{1,G(\cdot)}(\Omega)$. Then there exists $u \in W^{1,G(\cdot)}(\Omega)$ be a solution of equation $(3.2)$ such that $u - \theta \in W^{1,G(\cdot)}_0(\Omega)$.
\end{Th}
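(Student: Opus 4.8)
The plan is to solve the Dirichlet problem by a standard monotone-operator argument (Browder--Minty), after reducing to the homogeneous boundary case. First I would set $v = u - \theta$, so that the problem becomes: find $v \in W_0^{1,G(\cdot)}(\Omega)$ such that
\begin{equation*}
\int_{\Omega} \mathcal{A}(x, \nabla v + \nabla \theta) \cdot \nabla \varphi \, \mathrm{d}x = \int_{\Omega} \varphi \, \mathrm{d}\mu
\end{equation*}
for all $\varphi \in W_0^{1,G(\cdot)}(\Omega)$, using Lemma~3.8 to extend the test class. Since $G(\cdot)$ satisfies $(SC)$ and $(A_0)$, the space $V := W_0^{1,G(\cdot)}(\Omega)$ is a separable, reflexive Banach space, and $\mu \in V^*$ by hypothesis. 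Define the operator $\mathcal{L} : V \to V^*$ by $\langle \mathcal{L} v, \varphi \rangle = \int_{\Omega} \mathcal{A}(x, \nabla v + \nabla \theta) \cdot \nabla \varphi \, \mathrm{d}x$. The goal is to show $\mathcal{L}$ is well defined, bounded, strictly monotone, coercive and hemicontinuous, and then invoke the Browder--Minty surjectivity theorem to get $v$ with $\mathcal{L} v = \mu$.

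The verification of the structural properties proceeds as follows. \emph{Well-definedness and boundedness:} by $(a_3)$, $|\mathcal{A}(x, \nabla v + \nabla \theta)| \leq c_2 g(x, |\nabla v + \nabla \theta|)$, and since $g(x,\cdot)$ is the derivative of $G(x,\cdot)$, the bound $(2.5)$ together with Proposition~2.3 (Hölder inequality) and the estimate $G^*(x, g(x,t)) \leq (g^0-1) G(x,t)$ from $(2.4)$ shows $g(x, |\nabla v + \nabla \theta|) \in L^{G^*(\cdot)}(\Omega)$ with norm controlled by $\|\nabla v + \nabla \theta\|_{G(\cdot)}$; hence $\mathcal{L} v \in V^*$ and maps bounded sets to bounded sets. \emph{Strict monotonicity:} for $v_1 \neq v_2$,
\begin{equation*}
\langle \mathcal{L} v_1 - \mathcal{L} v_2, v_1 - v_2 \rangle = \int_{\Omega} \big( \mathcal{A}(x, \nabla v_1 + \nabla \theta) - \mathcal{A}(x, \nabla v_2 + \nabla \theta) \big) \cdot (\nabla v_1 - \nabla v_2) \, \mathrm{d}x > 0
\end{equation*}
by $(a_4)$ (the integrand is nonnegative and positive on the set where $\nabla v_1 \neq \nabla v_2$, which has positive measure unless $\nabla(v_1 - v_2) \equiv 0$, i.e.\ $v_1 = v_2$ by the Poincaré inequality in $W_0^{1,G(\cdot)}$). \emph{Coercivity:} by $(a_2)$, $\langle \mathcal{L} v, v \rangle = \int_{\Omega} \mathcal{A}(x,\nabla v + \nabla\theta)\cdot\nabla v \,\mathrm{d}x \geq c_1 \int_{\Omega} g(x,|\nabla v + \nabla\theta|)|\nabla v + \nabla\theta| \,\mathrm{d}x - \int_{\Omega} \mathcal{A}(x,\nabla v + \nabla\theta)\cdot\nabla\theta\,\mathrm{d}x$; using $(SC)$ to convert $g(x,t)t \approx G(x,t)$, the relation between modular and norm from Proposition~2.5, Young's inequality to absorb the $\nabla\theta$ terms, and the Poincaré inequality, one obtains $\langle \mathcal{L} v, v\rangle / \|v\|_V \to \infty$ as $\|v\|_V \to \infty$. \emph{Hemicontinuity:} for fixed $v, w, \varphi$, the map $t \mapsto \langle \mathcal{L}(v + tw), \varphi\rangle$ is continuous by the Carathéodory property $(a_1)$, dominated convergence, and the boundedness estimate above.

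The main obstacle, as usual in the generalized Orlicz setting, is the \emph{continuity/growth bookkeeping}: because $G(\cdot, t)$ is merely measurable in $x$, one cannot freely use $G$ inside test functions or apply scaling uniformly, so every estimate must be routed through the structural inequalities $(2.2)$--$(2.5)$, the duality $G^*(x, g(x,t)) \lesssim G(x,t)$, and the norm-modular equivalences of Propositions~2.3 and 2.5, keeping careful track of whether the relevant norms are $\geq 1$ or $\leq 1$. In particular the coercivity estimate requires splitting $\nabla v = (\nabla v + \nabla\theta) - \nabla\theta$ and absorbing the cross terms via Young's inequality for $G, G^*$, which is where the $(SC)$ condition is essential. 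Once $\mathcal{L}$ is shown to be bounded, strictly monotone, coercive and hemicontinuous on the reflexive separable space $V$, the Browder--Minty theorem yields a unique $v \in V$ with $\mathcal{L} v = \mu$; setting $u = v + \theta$ gives a solution of $(3.2)$ with $u - \theta \in W_0^{1,G(\cdot)}(\Omega)$, completing the proof.
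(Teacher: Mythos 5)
Your proposal follows essentially the same route as the paper: after translating by $\theta$, both define the operator $w \mapsto \int_\Omega \mathcal{A}(x,\nabla(w+\theta))\cdot\nabla\varphi\,\mathrm{d}x$ on $W^{1,G(\cdot)}_0(\Omega)$, verify boundedness via $(a_3)$, $(2.4)$--$(2.5)$ and H\"older, coercivity via $(a_2)$, $(SC)$ and the modular--norm relations, and conclude by the monotone-operator surjectivity theorem (Lions/Browder--Minty), setting $u = w + \theta$. The only cosmetic difference is that you check hemicontinuity (plus monotonicity from $(a_4)$) while the paper verifies demicontinuity through a.e.\ convergence of a subsequence and weak $L^{G^*(\cdot)}$-compactness; both are adequate for the surjectivity theorem, so the argument is correct.
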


\begin{proof}
Define a mapping $T: W^{1,G(\cdot)}_0(\Omega) \rightarrow (W^{1,G(\cdot)}_0(\Omega))^*$ such that, for $w \in W^{1,G(\cdot)}_0(\Omega)$
$$\langle Tw, \varphi \rangle := \displaystyle \int_{\Omega} \mathcal{A}(x,\nabla (w+\theta)) \cdot \nabla \varphi \, \mathrm{d}x.$$
The mapping $T$ is well defined, indeed by the condition $(a_3)$,
$$| \langle Tw, \varphi \rangle | \leq  \displaystyle c_2\int_{\Omega} g(x,|\nabla (w + \theta)|)|\nabla \varphi| \, \mathrm{d}x.$$
Using the H\"older inequality and inequalities $2.5$ and $2.4$, we have
\begin{equation}
\begin{array}{ll}
| \langle Tw , \varphi \rangle | & \leq \displaystyle 2c_2\norme{g(x,|\nabla (w + \theta)|)}_{G^*(\cdot)} \norme{\nabla \varphi}_{G(\cdot)}\\
& \leq  \displaystyle 2c_2 \left(\int_{\Omega} G^*(x,g(x,|\nabla (w+\theta)|)) \, \mathrm{d}x + 1\right)\norme{\nabla \varphi}_{G(\cdot)}\\
& \leq \displaystyle 2c_2 \left((g^0-1)\int_{\Omega} G(x,|\nabla (w+\theta)|) \, \mathrm{d}x + 1 \right)\norme{\nabla \varphi}_{G(\cdot)}.
\end{array}
\end{equation}
We apply the general result $\cite{ref19}$ which asserts that if $T$ is a bounded, coercive, demicontinuous map, then for all $\mu \in (W^{1,G(\cdot)}_0(\Omega))^*$ the equation $Tw = \mu$ has a solution $w \in W^{1,G(\cdot)}_0(\Omega)$.\\
\textbf{-i)} By the inequality $(4.1)$, the map $T$ is bounded.\\
\textbf{-ii)} Next, we show that $T$ is coercive. For any $w \in W^{1,G(\cdot)}_0(\Omega)$ and $\varphi \in W^{1,G(\cdot)}_0(\Omega)$, by the condition $(a_2)$ and, the condition $(SC)$, we have
$$\begin{array}{ll}
\langle Tw, w \rangle & = \displaystyle \int_{\Omega} (\mathcal{A}(x,\nabla (w+\theta)) \cdot \nabla w \, \mathrm{d}x\\
& \geq \displaystyle c_1\int_{\Omega} g(x,|\nabla (w+\theta)|) |\nabla w| \, \mathrm{d}x\\
& \geq \displaystyle c_1\int_{\Omega} g(x,|\nabla (w+\theta)|)|\nabla (w+\theta)| \, \mathrm{d}x - \displaystyle c_1\int_{\Omega} g(x,|\nabla (w+\theta)|)|\nabla \theta| \, \mathrm{d}x\\
& \geq \displaystyle c_1g_0\int_{\Omega} G(x,|\nabla (w+\theta)|) \, \mathrm{d}x - \displaystyle c_1\int_{\Omega} g(x,|\nabla (w+\theta)|)|\nabla \theta| \, \mathrm{d}x.
\end{array}$$
Using inequality $(2.1)$, for $a=|\nabla (w+\theta)|$ and $b=g^0|\nabla \theta|$, and the condition $(SC)$, we get
$$\begin{array}{ll}
\displaystyle \int_{\Omega} g(x,|\nabla (w+\theta)|)|\nabla \theta| \, \mathrm{d}x & \leq \displaystyle \frac{1}{g^0} \int_{\Omega} g(x,|\nabla (w+\theta)|)|\nabla (w+\theta)| \, \mathrm{d}x \\
& \quad + \displaystyle \frac{1}{g^0} \int_{\Omega} g(x,g^0|\nabla \theta|)g^0|\nabla \theta| \, \mathrm{d}x\\
& \leq \displaystyle \int_{\Omega} G(x,|\nabla (w+\theta)|) \, \mathrm{d}x + \displaystyle  (g^0)^{g^0}\int_{\Omega} G(x,|\nabla \theta|) \, \mathrm{d}x.
\end{array}$$
Hence,
$$\langle Tw, w \rangle \geq \displaystyle c_1(g_0-1)\int_{\Omega} G(x,|\nabla (w+\theta)|) \, \mathrm{d}x - \displaystyle c_1 (g^0)^{g^0}\int_{\Omega} G(x,|\nabla \theta|) \, \mathrm{d}x.$$
Choosing $w$ sufficiently large, we can assume that $\norme{\nabla \theta}_{G(\cdot)} + \displaystyle \frac{1}{2} \leq \displaystyle\frac{1}{2}\norme{\nabla w}_{G(\cdot)}$. Then
$$1 \leq \norme{\nabla w}_{G(\cdot)} \leq \norme{\nabla (w+\theta)}_{G(\cdot)} + \norme{\nabla \theta}_{G(\cdot)} \leq \norme{\nabla (w+\theta)}_{G(\cdot)} + \frac{1}{2}\norme{\nabla w}_{G(\cdot)}.$$
So,
$$ 1 \leq \norme{\nabla w}_{G(\cdot)} \leq 2\norme{\nabla (w+\theta)}_{G(\cdot)}.$$
Hence, by Proposition $2.5$, we have
$$\frac{\langle Tw, w \rangle}{\norme{\nabla w}_{G(\cdot)}} \geq \frac{c_1(g_0-1)}{2^{g_0}}\norme{\nabla w}_{G(\cdot)}^{g_0-1} - \frac{c_1 (g^0)^{g^0}}{\norme{\nabla w}_{G(\cdot)}}\int_{\Omega} G(x,|\nabla \theta|) \, \mathrm{d}x.$$
The right hand side goes to $+\infty$ as $\norme{\nabla w}_{G(\cdot)}\rightarrow \infty$. Hence, $T$ is coercive.\\
\textbf{-iii)} Now we show $T$ is demicontinuous. In fact, let  $w_i$ be a sequence that converges to an element $w$ in $W^{1,G(\cdot)}_0(\Omega)$. By Lemma $4.1$, there exists a subsequence $ w_{i_j}$ of $w_i$, such that $w_{i_j} \rightarrow w \,$, a.e. in $\Omega$.\\
Since the mapping $\xi \mapsto \mathcal{A}(x,\xi)$ is continuous, then
$$ \mathcal{A}(x, \nabla w_{i_{j}})  \rightarrow \mathcal{A}(x,\nabla w), \, \; \text{a.e. in} \; \Omega.$$
Or by the condition $(a_3)$, Remark $2.2$ and inequalities $(2.2)$, $(2.4)$, we have
$$\begin{array}{ll}
\displaystyle \int_{\Omega} G^*(x, |\mathcal{A}(x,\nabla w_{i_j})|) \, \mathrm{d}x
& \leq \displaystyle \int_{\Omega} G^*(x, c_2 g(x,|\nabla w_{i_j}|) \, \mathrm{d}x\\
& \leq \text{max}\left( c_2 , (c_2)^{\frac{g_0}{g_0 - 1}} \right) \displaystyle \int_{\Omega} G^*(x, g(x,|\nabla w_{i_j}|) \, \mathrm{d}x\\
& \leq (g^0 -1) \text{max}\left( c_2 , (c_2)^{\frac{g_0}{g_0 - 1}} \right) \displaystyle \int_{\Omega} G(x,|\nabla w_{i_j}|) \, \mathrm{d}x.
\end{array}$$
Hence, from the inequality $(2.5)$, the $L^{G^*(\cdot)}(\Omega)-$norms of $ \mathcal{A}(x, \nabla w_{i_j})$ is uniformly bounded. So, by Lemma $4.2$, we have
$$\mathcal{A}(x,\nabla w_{i_j}) \rightharpoonup \mathcal{A}(x,\nabla w),$$
weakly in $L^{G^*(\cdot)}(\Omega)$. Since the weak limit is independent of the choice of the subsequence, it follows that
$$\mathcal{A}(x,\nabla w_{i}) \rightharpoonup \mathcal{A}(x,\nabla w),$$
weakly in $L^{G^*(\cdot)}(\Omega)$. Consequently, for all $\varphi \in W^{1,G(\cdot)}_0(\Omega)$.
$$ \langle T w_i , \varphi \rangle \rightarrow \langle T w , \varphi \rangle.$$
Hence, $T$ is demicontinuous on $W^{1,G(\cdot)}_0(\Omega)$.\\
Therefore, $u = w + \theta$ is a solution to equation $(3.2)$ 
\end{proof}

\section{Wolff potential bounded}

In this Section, we proof pointwise potential bounds for solutions. First, we introduce the Wolff potential in the generalized Orlicz setting.

\begin{df}
Let $\mu$ be a nonneagative Radon measure on $\R^n$ and $R > 0$. We define the Wolff potential of $\mu$ and $G(\cdot)$ by
$$ W^\mu_{G(\cdot)}(x,R) := \displaystyle \int_0^R g^{-1}\left( x,\frac{\mu(B(x,s)}{s^{n-1}}\right) \, \mathrm{d}s.$$
\end{df}

\begin{exs} 
In the variable exponent case, $G(x,t) = \displaystyle \frac{t^{p(x)}}{p(x)}$, (see $\cite{ref1,ref20}$) 
$$ W^\mu_{p(\cdot)}(x,R) = \displaystyle \int_0^R \left(\frac{\mu(B(x,s)}{s^{n-p(x)}}\right)^\frac{1}{p(x)-1} \, \frac{\mathrm{d}s}{s} = \displaystyle \int_0^R \left(\frac{\mu(B(x,s)}{s^{n-1}}\right)^\frac{1}{p(x)-1} \, \mathrm{d}s.$$
In the Orlicz case, $G(x,t) = G(t)$, (see  $\cite{ref21}$)
$$ W^\mu_{G}(x,R) = \displaystyle \int_0^R g^{-1}\left(\frac{\mu(B(x,s)}{s^{n-1}}\right) \, \mathrm{d}s.$$
\end{exs}

\noindent The following lemma establishes that the functions $ G^- $ is an $ \Phi$-function $\cite{ref3}$.

\begin{lem}
If $G(\cdot) \in \Phi(\Omega)$ satisfies $(SC)$ and $(A_0)$, then $G^-$ is an $\Phi$-function and $ 1 < g_0 \leq \displaystyle \frac{t \tilde{g}(t)}{G^-(t)} \leq g^0$,  where $\tilde{g}$ is the right-hand derivative of $G^-$.
\end{lem}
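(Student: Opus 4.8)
The strategy is to transfer each defining property of the $\Phi$-functions $G(x,\cdot)$ to $G^-$ by pushing it through the infimum over $x$, and then to extract the derivative bound from the resulting power-type growth inequalities $(2.2)$, $(2.3)$.

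First I would check the elementary axioms. Since $G(x,0)=0$ for a.e.\ $x$, we get $G^-(0)=0$; fixing one admissible point $x_0$, $0\le G^-(t)\le G(x_0,t)\to 0$ as $t\to 0^+$, so $\lim_{t\to 0^+}G^-(t)=0$, and $G^-$ is nondecreasing as an infimum of nondecreasing functions. For $G^-(t)\to\infty$ I would use $(A_0)$ together with the left inequality in $(2.2)$: for $t\ge 1$ and every $x$, $G(x,t)\ge t^{g_0}G(x,1)\ge t^{g_0}/c_0$, hence $G^-(t)\ge t^{g_0}/c_0\to\infty$.

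Next comes the substantive step. Since $(2.2)$ and $(2.3)$ hold for every $x$, taking the infimum over $x$ (which preserves such two-sided bounds) gives $\sigma^{g_0}G^-(t)\le G^-(\sigma t)\le\sigma^{g^0}G^-(t)$ for $\sigma\ge 1$, and the reversed pair for $\sigma\le 1$. Writing $t=\sigma s$ with $\sigma\ge 1$ in the first of these shows that $G^-(t)/t^{g_0}$ is nondecreasing, and since $g_0>1$ it follows (using $s^{g_0-1}\le t^{g_0-1}$) that $t\mapsto G^-(t)/t$ is nondecreasing; hence $G^-$ is convex. (A pointwise infimum of convex functions need not be differentiable, so wherever a genuine right derivative is required I would instead use the manifestly convex $\widetilde G(t):=\int_0^t G^-(s)\,s^{-1}\,\mathrm{d}s$, which inherits the same power bounds and satisfies $\widetilde G\le G^-\le 2^{g^0}\widetilde G$; this passage to an equivalent convex representative is, I expect, the only delicate point in the argument.)

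Finally, for the derivative bound, let $\tilde g$ be the right derivative of $G^-$. Taking $\sigma=1+h/t$ with $h>0$ in the growth inequality yields
\[
\Big(1+\tfrac{h}{t}\Big)^{g_0}G^-(t)\le G^-(t+h)\le\Big(1+\tfrac{h}{t}\Big)^{g^0}G^-(t);
\]
subtracting $G^-(t)$, dividing by $h$ and letting $h\to 0^+$ gives $\dfrac{g_0\,G^-(t)}{t}\le\tilde g(t)\le\dfrac{g^0\,G^-(t)}{t}$, that is $g_0\le\dfrac{t\,\tilde g(t)}{G^-(t)}\le g^0$, with $g_0>1$ by $(SC)$. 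Conversely, integrating these derivative bounds recovers the power inequalities, so the two formulations are equivalent and either could equally well be taken as the starting point.
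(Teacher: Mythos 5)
Your reduction to the power-type bounds is fine: taking the infimum over $x$ in $(2.2)$--$(2.3)$ does give $\sigma^{g_0}G^-(t)\le G^-(\sigma t)\le\sigma^{g^0}G^-(t)$ for $\sigma\ge 1$ (and the reversed pair for $\sigma\le 1$), and your limiting argument with $\sigma=1+h/t$ correctly yields $g_0\le t\,\tilde g(t)/G^-(t)\le g^0$ \emph{at every point where the right-hand derivative $\tilde g(t)$ exists}. The genuine gap is the sentence ``hence $G^-$ is convex''. Monotonicity of $t\mapsto G^-(t)/t$ does not imply convexity, and in fact $G^-$ can fail to be convex under the stated hypotheses: take $G(x,t)=t^{2}$ on one half of the ball and $G(x,t)=t^{3/2}$ on the other half; each branch satisfies $(SC)$ with $g_0=\tfrac32$, $g^0=2$ and $(A_0)$, yet $G^-(t)=\min\{t^{2},t^{3/2}\}$ has a concave corner at $t=1$, where the slope drops from $2$ to $\tfrac32$. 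So your argument does not show that $G^-$ is a $\Phi$-function in the sense of Definition 2.1, and, more importantly, it does not show that $\tilde g(t)$ exists for every $t$ (for an infimum over infinitely many $x$ the one-sided difference quotients are only sandwiched in the corridor given by the power bounds and need not converge); existence of $\tilde g$ is exactly what your final differentiation step assumes, and it is what the paper later uses (the chain-rule bound $|\nabla G^-(w)|\le\tilde g(w)|\nabla w|$ and the estimate $G^-(k/R)\ge\tfrac{1}{g^0}\tfrac{k}{R}\tilde g(k/R)$ in the proof of Theorem 5.10).

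Your fallback of passing to $\widetilde G(t)=\int_0^t G^-(s)s^{-1}\,\mathrm{d}s$ is sound in itself: $\widetilde G$ is convex because $G^-(s)/s$ is nondecreasing, and from $G^-(s)\le(s/t)^{g_0}G^-(t)$ and $G^-(s)\ge(s/t)^{g^0}G^-(t)$ for $s\le t$ one even gets $g_0\widetilde G\le G^-\le g^0\widetilde G$, hence $t\widetilde G'(t)/\widetilde G(t)=G^-(t)/\widetilde G(t)\in[g_0,g^0]$. But that proves the statement for a \emph{different} function: the lemma, and its later applications, are phrased for $G^-$ and its own right derivative $\tilde g$. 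To complete your route you must either (i) settle for the weaker but essentially sufficient statement that $G^-$ is increasing, $G^-(t)/t$ is nondecreasing, and the derivative bound holds wherever $\tilde g$ exists (and verify this suffices in the places the lemma is invoked), or (ii) systematically replace $G^-$ by the convex surrogate $\widetilde G$ in those later estimates, carrying the comparison constants. Note also that the paper itself offers no proof of this lemma --- it cites \cite{ref3} --- so there is no in-paper argument to compare against; as written, however, your proposal does not yet establish the statement in the form given.
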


The following lemma gives a more flexible characterization of $(A_{1,n}) \; \cite{ref9}$.

\begin{lem}
Let $\Omega \subset \R^n$ be convex, $G(\cdot) \in \Phi(\Omega)$ and $0 < r \leq s$. Then $G(\cdot)$ satisfies $(A_{1,n})$ if, and only if, there exists $\beta > 0$ such that, for every $ x,y \in B_R \subset \Omega$ with $|B_R| \leq 1$, we have
$$ G_B(x,\beta t) \leq G_B(y,t) \; \; \; \text{when} \; \; t \in \left[ r , \frac{s}{R}\right]. $$
\end{lem}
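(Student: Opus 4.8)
\emph{Proof proposal.} I would prove the two implications separately. The crux is that the two ways of measuring the oscillation of $G(\cdot,t)$ over a ball, namely
\[
\exists\,C\ge1:\ G(x,t)\le C\,G(y,t)\qquad\text{and}\qquad\exists\,\beta\in(0,1]:\ G(x,\beta t)\le G(y,t),
\]
are equivalent on a fixed window of values of $t$, and that this window (the endpoints $r$, $s$ and the admissible size of $R$) may be rescaled at the cost of worsening the constants. I would use throughout the inequalities $(2.2)$--$(2.3)$ --- that is, $G(\cdot)$ is tacitly assumed to satisfy $(SC)$, as it does in every application in this paper --- together with convexity of $G(x,\cdot)$ and $G(x,0)=0$, which give the elementary bound $G(x,\lambda t)\le\lambda\,G(x,t)$ for $\lambda\in[0,1]$.

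The easy implication is $(A_{1,n})\Rightarrow$ the flexible form. Assume $(A_{1,n})$ with constant $C$, fix $x,y\in B_R\subset\Omega$ with $|B_R|\le1$, and take $t\in[r,s/R]$. When the dilated argument $\beta t$ already lies in the $(A_{1,n})$-window $[1,1/R]$, convexity gives $G(x,\beta t)\le\beta\,G(x,t)\le\beta C\,G(y,t)$, so $\beta:=1/C$ works. The two ``tails'' are handled separately: for the upper tail $t>1/R$ one reapplies $(A_{1,n})$ on concentric balls $B_{R'}$ of radius $R'\simeq R$ (shrunk by at most the fixed factor $s$) and chains the resulting comparisons, at the price of replacing $C$ by a fixed power; for the lower tail $t<1$, which occurs only if $r<1$, one combines $G(x,\beta t)\le\beta t\,G(x,1)$ with the $t=1$ instance of $(A_{1,n})$ and the lower bound $G(y,t)\ge t^{g^0}G(y,1)$ from $(2.3)$ --- $(A_0)$ being what makes $G(y,1)$ comparable to $1$ --- and then takes $\beta$ small in terms of $g^0$ and $C$.

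For the converse, suppose the flexible inequality holds with dilation $\beta$, fix $x,y\in B_R\subset\Omega$ with $R<1$, and take $t$ in the $(A_{1,n})$-window $[1,1/R]$. Bringing $t$ into the window $[r,s/R']$ of the hypothesis (again by a bounded rescaling of the radius and, if necessary, chaining over boundedly many sub-balls joining $x$ to $y$ inside $\Omega$ --- here convexity of $\Omega$ is used) gives $G(x,\beta t)\le G(y,t)$; replacing $t$ by $t/\beta$ yields $G(x,t)\le G(y,t/\beta)$, and the doubling bound $(2.2)$ gives $G(y,t/\beta)\le\beta^{-g^0}G(y,t)$. Hence $(A_{1,n})$ holds with $C:=\beta^{-g^0}$ times the (bounded) chaining factor.

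The step I expect to be the main obstacle is purely the bookkeeping of these rescalings: one must fix an order in which to perform the dilation $t\mapsto t/\beta$, the adjustment of the radius $R\mapsto R'$, and the chaining over sub-balls, so that at every stage the shifted parameter stays in an admissible interval and every ball remains inside $\Omega$. Once this is organized, the analytic content is nothing beyond convexity and $(2.2)$--$(2.3)$. The only point of genuine substance is the sub-unit range $t<1$ when $r<1$: there $(A_{1,n})$ carries no information, and one really needs the normalization $G(y,t)\ge c\,t^{g^0}$ provided by $(SC)$ and $(A_0)$, without which the flexible inequality can fail.
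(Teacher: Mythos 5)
The paper does not prove this lemma at all --- it is quoted from the Harjulehto--H\"ast\"o monograph \cite{ref9} --- so there is no internal proof to compare you with; judged against the source, your argument is a correct sketch but by a genuinely different route. The monograph obtains this kind of range-flexibility for weak $\Phi$-functions without any doubling, working with the inverse formulation of $(A_1)$ together with $(A_0)$, whereas you run everything through the $(SC)$-doubling inequalities $(2.2)$--$(2.3)$. That trade is legitimate for this paper (every application of the lemma has $(SC)$ and $(A_0)$ in force), and your closing caveat is genuinely needed: as literally stated, for bare $G(\cdot)\in\Phi(\Omega)$, the equivalence fails on the sub-unit range --- take $G(x,t)=(t-a(x))_+$ with measurable $a$ taking the value $\tfrac12$ and values arbitrarily near $0$ in every ball; then $(A_{1,n})$ holds with $C=2$, yet for $r<\tfrac12$ the choice $t=r$ gives $G(y,r)=0$ while $G(x,\beta r)>0$ for points with $a(x)<\beta r$, so no $\beta$ works. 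Note, though, that what rescues the sub-unit range is $(SC)$ alone, not $(A_0)$: in your own chain $G(x,\beta t)\le\beta t\,G(x,1)\le \beta t C\,G(y,1)\le \beta C\, t^{1-g^0}G(y,t)$ the factor $G(y,1)$ cancels, so the parenthetical invoking $(A_0)$ is superfluous. Two items of the bookkeeping you defer deserve care: in the forward upper tail, \emph{concentric} balls of radius about $R/s$ need not contain both $x$ and $y$; what is actually needed is a chain of roughly $s$ overlapping balls of that radius along the segment $[x,y]$, which lies in $\Omega$ precisely because $\Omega$ is convex (you invoke this chaining correctly only in the converse direction); and the normalizations must be reconciled ($R<1$ with $x,y\in B_R\cap\Omega$ in $(A_{1,n})$ versus $|B_R|\le1$ with $B_R\subset\Omega$ here), where reading both conditions over $x,y\in B_R\cap\Omega$ keeps the number of chaining balls bounded, while insisting on balls contained in $\Omega$ would let that number blow up for points near $\partial\Omega$.
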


\subsection{Potential lower bounded}
The following Lemma gives the Caccioppoli type estimate of supersolution to equation $(3.1)$ (see $\cite{ref3}$).

\begin{lem}
Let $u$ be a nonpositive supersolution of $(3.1)$ in a ball $2B$, $\eta \in C^\infty_0(2B)$ with $0 \leq \eta \leq 1$ and $|\nabla \eta| \leq \frac{1}{R} $. Then, there exits a constant $C$ such that
$$\displaystyle \int_{\frac{3}{2} B} G(x, |\nabla u|) \eta^{g^0} \, \mathrm{d}x \leq C \displaystyle \int_{\frac{3}{2} B} G^+(\frac{-u}{R}) \, \mathrm{d}x.$$
\end{lem}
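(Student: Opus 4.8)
The plan is to derive the Caccioppoli inequality in the usual way, by testing the supersolution equation against a suitable admissible test function built from $u$ and the cutoff $\eta$, then absorbing the bad term coming from $\nabla\eta$ into the left-hand side via Young's inequality. Since $u$ is nonpositive, write $v=-u\geq 0$; then $v$ is a subsolution, or more directly we keep $u$ and choose the test function $\varphi=-u\,\eta^{g^0}$, which is nonnegative (because $u\le 0$) and lies in $W^{1,G(\cdot)}_0(2B)$ after the usual truncation/approximation argument, so it is admissible in the definition of supersolution (Definition 3.3, extended via Lemma 3.8). The inequality $\int \mathcal{A}(x,\nabla u)\cdot\nabla\varphi\,dx\ge 0$ becomes, after computing $\nabla\varphi=-\eta^{g^0}\nabla u - g^0 u\,\eta^{g^0-1}\nabla\eta$,
$$\int \eta^{g^0}\,\mathcal{A}(x,\nabla u)\cdot\nabla u\,dx \le -g^0\int u\,\eta^{g^0-1}\,\mathcal{A}(x,\nabla u)\cdot\nabla\eta\,dx.$$

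Next I would invoke the structural bounds: the left side is controlled below by $c_1\int \eta^{g^0} g(x,|\nabla u|)|\nabla u|\,dx \ge c_1 g_0\int \eta^{g^0} G(x,|\nabla u|)\,dx$ using $(a_2)$ and $(SC)$; the right side is controlled above by $c_2 g^0\int |u|\,\eta^{g^0-1} g(x,|\nabla u|)|\nabla\eta|\,dx$ using $(a_3)$. Now apply the pointwise inequality (2.1), $g(x,a)b\le g(x,a)a+g(x,b)b$, with $a=|\nabla u|$ and $b=\lambda |u||\nabla\eta|$ for a small parameter $\lambda$ to be chosen, together with the homogeneity estimates (2.2)–(2.3) to pull the constants out; this splits the right-hand side into a term $\int\eta^{g^0}g(x,|\nabla u|)|\nabla u|\,dx$ (hence $\lesssim \int\eta^{g^0}G(x,|\nabla u|)\,dx$ by $(SC)$) which is absorbed into the left-hand side after choosing $\lambda$ small, and a term $\lesssim \int G(x,|u||\nabla\eta|)\,dx$. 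Using $|\nabla\eta|\le 1/R$ and, on $\{\eta>0\}\subset \tfrac32 B$, passing from $G(x,\cdot)$ to its upper envelope $G^+$ (plus the homogeneity $(2.2)$ to handle the factor $|u|/R$ against $R$), the last term is bounded by $C\int_{\frac32 B} G^+\!\left(\tfrac{-u}{R}\right)\,dx$, which is exactly the claimed right-hand side. The exponent $g^0$ on $\eta$ is precisely what makes $\eta^{g^0-1}$ pair correctly with the gradient term so that the absorption works; using (2.2) on $\eta^{g^0}G(x,|\nabla u|)$ versus $\eta^{g^0-1}$ keeps the cutoff powers consistent.

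The main technical obstacle, as the authors flag in the introduction, is that $G(\cdot,t)$ is merely measurable in $x$, so $\varphi=-u\eta^{g^0}$ is a legitimate Sobolev test function but any estimate that "integrates $G(x,\cdot)$ of something involving $u$" must be routed through the envelopes $G^+$ and $G^-$ and reconnected using the condition $(A_0)$ (and, where a genuine comparison at two different points is needed, $(A_{1,n})$ via Lemma 5.5). Here, fortunately, the only place $G$ appears with the solution rather than its gradient is the final $\nabla\eta$-term, where we simply dominate $G(x,\cdot)\le G^+(\cdot)$ pointwise — no two-point comparison is required — so the measurability issue is mild for this particular lemma. The remaining care is purely bookkeeping: justifying that $-u\eta^{g^0}\in W^{1,G(\cdot)}_0(2B)$ (truncate $u$ at level $-k$, test, let $k\to\infty$ using monotone/Fatou convergence from Proposition 2.4), and tracking that all constants depend only on $c_1,c_2,g_0,g^0$ and not on $R$ or $u$.
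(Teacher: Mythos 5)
The paper itself does not prove this lemma (it is quoted from \cite{ref3}), so your proposal can only be measured against the standard Caccioppoli argument, and your overall plan --- test with $\varphi=-u\,\eta^{g^0}\ge 0$, use $(a_2)$, $(a_3)$, $(SC)$, then a Young-type inequality and absorption --- is indeed that argument. The gap is in the one step that is genuinely delicate, the absorption. After the structural bounds you must estimate $\int \eta^{g^0-1}g(x,|\nabla u|)\,|u|\,|\nabla\eta|\,dx$. Applying $(2.1)$ with $a=|\nabla u|$ and $b=\lambda|u||\nabla\eta|$ and dividing back by $\lambda$ produces the term $\tfrac{1}{\lambda}\int \eta^{g^0-1}g(x,|\nabla u|)|\nabla u|\,dx$: first, smallness would come from taking $\lambda$ large, not small (as written, small $\lambda$ makes the coefficient $1/\lambda$ blow up); second, and more seriously, the cutoff power on this term is $\eta^{g^0-1}$, not $\eta^{g^0}$ as your sketch asserts. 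Since $0\le\eta\le1$, one has $\eta^{g^0-1}\ge\eta^{g^0}$ with ratio $1/\eta$ unbounded near the boundary of $\{\eta>0\}$, so no choice of the parameter allows you to absorb $\int\eta^{g^0-1}g(x,|\nabla u|)|\nabla u|\,dx$ into $\int\eta^{g^0}G(x,|\nabla u|)\,dx$; the same mismatch (you end up with $\eta^{g_0}$ instead of $\eta^{g^0}$) occurs if you first trade $\eta^{g^0-1}g(x,|\nabla u|)$ for $g(x,\eta|\nabla u|)$ via $(2.2)$--$(2.3)$.

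The missing idea is to push the cutoff \emph{through} the Young inequality so that it reappears with the conjugate exponent: estimate $\eta^{g^0-1}g(x,|\nabla u|)\,\tfrac{|u|}{R}\le G^*\bigl(x,\epsilon\,\eta^{g^0-1}g(x,|\nabla u|)\bigr)+G\bigl(x,\tfrac{|u|}{\epsilon R}\bigr)$, then use Remark $2.2$ (the $(SC)$ condition for $G^*$, which gives $G^*(x,\sigma t)\le\sigma^{g^0/(g^0-1)}G^*(x,t)$ for $\sigma\le1$) together with $(2.4)$: since $\bigl(\eta^{g^0-1}\bigr)^{g^0/(g^0-1)}=\eta^{g^0}$, the first term is at most $\epsilon^{g^0/(g^0-1)}(g^0-1)\,\eta^{g^0}G(x,|\nabla u|)$ and is absorbed for small $\epsilon$, while the second is at most $\epsilon^{-g^0}G^+\bigl(\tfrac{-u}{R}\bigr)$ by $(2.2)$ and $|\nabla\eta|\le 1/R$. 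Equivalently, one can stay with $(2.1)$ but take $b=K|u||\nabla\eta|/\eta$ on $\{\eta>0\}$ and multiply by $\eta^{g^0}/K$; the division by $\eta$ is essential. This is precisely where the exponent $g^0$ on the cutoff is used; your remark that it ``makes the powers consistent'' is the right instinct, but the computation you outline does not produce it. The remaining points of your write-up (admissibility of $-u\eta^{g^0}$ via truncation, the purely pointwise bound $G\le G^+$ with no two-point comparison needed, constants depending only on $c_1,c_2,g_0,g^0$) are fine.
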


\begin{Th}
Let $G(\cdot) \in \Phi(\Omega)$ satisfies $(SC)$, $(A_0)$ and $(A_{1,n})$. Let $u$ be a nonnegative weak solution to $(3.2)$ with nonnegative Radon measure $\mu$ in $\Omega$ and $B_R = B(x_0, R) \subset B_{2R} \Subset \Omega$. If $u$ is lower semicontinuous at $x_0$, then there exists a constant $C = C\left(c_1,c_2,g_0,g^0,n,\frac{\norme{u}_{g_0,B}}{|B|}\right) > 0$ such that
$$u(x_0) \geq C W^{\mu}_{G(\cdot)}(x_0,R) + \inf_{2B} u - 2R.$$
\end{Th}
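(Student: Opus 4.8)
The plan is to combine a dyadic iteration with a one–scale comparison estimate. Since adding a constant affects neither $(3.2)$ nor $\mu$, we may first normalise $\inf_{2B}u=0$, so that it is enough to prove $u(x_0)\ge C\,W^{\mu}_{G(\cdot)}(x_0,R)-2R$. Put $r_j=2^{-j}R$, $B_j=B(x_0,r_j)$ and $m_j=\inf_{B_j}u$; then $m_0=\inf_Bu\ge\inf_{2B}u=0$, and lower semicontinuity at $x_0$ gives $m_j\uparrow u(x_0)$. Since $g^{-1}(x_0,\cdot)$ is nondecreasing and, by $(SC)$ and Remark $2.2$, satisfies $g^{-1}(x_0,\lambda t)\le\lambda^{1/(g_0-1)}g^{-1}(x_0,t)$ for $\lambda\ge1$, a routine dyadic splitting shows that $W^{\mu}_{G(\cdot)}(x_0,R)$ is comparable, with constants depending only on $n,g_0,g^0$, to $\sum_{j\ge0}r_j\,g^{-1}\!\big(x_0,\mu(B_j)/r_j^{\,n-1}\big)$. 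Thus the theorem will follow once we establish the one–scale recursion
\[
m_{j+1}\ \ge\ m_j\ +\ c\Big(r_j\,g^{-1}\!\big(x_0,\,c\,\mu(B_j)/r_j^{\,n-1}\big)-r_j\Big),\qquad j\ge 0,
\]
with $c=c\big(c_1,c_2,g_0,g^0,n,\norme{u}_{g_0,B}/|B|\big)\in(0,1]$: summing over $j$ telescopes, $\sum_{j\ge0}r_j=2R$, and $m_0\ge0$. (A preliminary rescaling $u\mapsto u/\lambda$ normalising $\norme{u}_{g_0,B}/|B|$ makes $c$ uniform in $j$.)

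For the recursion on a fixed $B_j$, the plan is to compare $u$ with the $\mathcal A$–potential of $\mu$ in $B_j$. As $\mathcal A(x,\nabla u)\in L^{G^*(\cdot)}(\Omega)$ by $(a_3)$ and $(2.4)$–$(2.5)$, the restriction of $\mu=-\operatorname{div}\mathcal A(x,\nabla u)$ to $B_j$ lies in $\big(W^{1,G(\cdot)}_0(B_j)\big)^*$, so Theorem $4.3$ (with boundary datum the constant $m_j$) produces $v_j\in W^{1,G(\cdot)}(B_j)$ solving $-\operatorname{div}\mathcal A(x,\nabla v_j)=\mu$ in $B_j$ with $v_j-m_j\in W^{1,G(\cdot)}_0(B_j)$. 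Since $u$ is a supersolution with $u\ge m_j$ on $\partial B_j$, the comparison principle (Lemma $3.10$) gives $v_j\le u$ in $B_j$, while $\mu\ge0$ gives $v_j\ge m_j$. Hence $\tilde v_j:=v_j-m_j$ is a nonnegative supersolution of $-\operatorname{div}\mathcal A(x,\nabla\tilde v_j)=\mu$ in $B_j$ with zero boundary values, $0\le\tilde v_j\le u-m_j$, and $\inf_{B_{j+1}}u\ge m_j+\inf_{B_{j+1}}\tilde v_j$; so the recursion reduces to the one–scale lower bound $\inf_{B_{j+1}}\tilde v_j\ge c\,r_j\,g^{-1}\!\big(x_0,c\,\mu(B_j)/r_j^{\,n-1}\big)-c\,r_j$.

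To obtain this, we would test the equation for $\tilde v_j$ with the truncations $\min(\tilde v_j,t)\in W^{1,G(\cdot)}_0(B_j)$: by $(a_2)$ and $(SC)$ this yields the energy bound
\[
\int_{\{\tilde v_j<t\}}G(x,|\nabla\tilde v_j|)\,dx\ \le\ \frac{1}{c_1 g_0}\int_{B_j}\min(\tilde v_j,t)\,d\mu\ \le\ \frac{t\,\mu(B_j)}{c_1 g_0}.
\]
Combining this with a Sobolev–Poincaré inequality for $W^{1,G^-}_0(B_j)$ (legitimate since $G^-$ is a $\Phi$–function, Lemma $5.1$), and using $(SC)$, $(A_{1,n})$ and Lemma $5.2$ to compare $G^-$, $G^+$ and $g(x_0,\cdot)$ and the growth bound of Remark $2.2$ to invert $g$, one is led to the optimal level $t\sim r_j\,g^{-1}\!\big(x_0,c\,\mu(B_j)/r_j^{\,n-1}\big)$ and to the conclusion that $\tilde v_j$ exceeds a fixed multiple of this level on a set of definite proportion of $B_j$. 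Feeding that into the weak Harnack inequality for the supersolution $\tilde v_j$ (Lemma $3.5$, or Lemma $3.6$ for a truncated $\tilde v_j$) through $\inf_{B_{j+1}}(\tilde v_j+r_j)\ge c\big(\dashint_{B_j}(\tilde v_j+r_j)^\gamma\big)^{1/\gamma}$ then gives the one–scale lower bound, and hence the recursion.

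I expect the main obstacle to be precisely this one–scale estimate — in particular the Sobolev step together with the optimisation of the truncation level, which is where the factor $g^{-1}(x_0,\cdot)$ is produced — aggravated by the fact that $G(\cdot,t)$, $G^{\pm}$ and $g^{-1}(\cdot,t)$ are only measurable in $x$: this forbids inserting $G(\cdot,t)$ into test functions and forces every comparison between $G^+$, $G^-$ and $g(x_0,\cdot)$ to be routed through $(A_{1,n})$ via Lemmas $5.1$ and $5.2$, exactly the role played by logarithmic Hölder continuity in the variable–exponent setting. The dependence of the constant on $\norme{u}_{g_0,B}/|B|$ enters through the weak Harnack inequality applied to the (possibly unbounded) potential $\tilde v_j$, and uniformity in $j$ is secured by the initial rescaling.
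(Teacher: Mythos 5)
Your reduction to a dyadic one–scale recursion has the right general shape, but the recursion you state is false, and the mechanism you propose for proving it runs in the wrong direction. In $m_{j+1}\ge m_j+c\big(r_j\,g^{-1}(x_0,c\,\mu(B_j)/r_j^{\,n-1})-r_j\big)$ the measure of the \emph{larger} ball of the pair $(B_{j+1},B_j)$ appears; already for the $p$-Laplacian this fails: take $\mu$ large and concentrated on a sphere of radius $(1-\epsilon)r_j$ just inside $\partial B_j$. Then $m_{j+1}-m_j$ stays bounded (and likewise $\inf_{B_{j+1}}\tilde v_j$ is tiny, since $\tilde v_j$ is then essentially a Green-type potential of a measure sitting next to $\partial B_j$, so your reduced claim $\inf_{B_{j+1}}\tilde v_j\gtrsim r_j g^{-1}(x_0,c\,\mu(B_j)/r_j^{\,n-1})-cr_j$ also fails), while $r_j\big(\mu(B_j)/r_j^{\,n-1}\big)^{1/(p-1)}$ can be made arbitrarily large. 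The correct one–scale inequality pairs the $\mu$-mass of a ball with the infima over that ball and its \emph{double}, namely $C R\,g^{-1}\!\big(x_0,\mu(B)/R^{\,n-1}\big)\le \inf_B u-\inf_{2B}u+R$, which is exactly the paper's inequality (5.1); the room between $B$ and $2B$ is what protects against measure concentrating near the boundary of the ball whose mass is estimated, and iterating (5.1) with $R_j=2^{1-j}R$ telescopes to the claim via lower semicontinuity.

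Second, even with the indexing corrected, your engine for the one–scale bound — testing with $\min(\tilde v_j,t)$, the energy bound $\int_{\{\tilde v_j<t\}}G(x,|\nabla\tilde v_j|)\,dx\lesssim t\,\mu(B_j)$, and the Sobolev inequality for $G^-$ — produces \emph{upper} bounds on the distribution of $\tilde v_j$ in terms of $\mu$ (this is precisely the machinery of the paper's upper-bound Theorem 5.10, inequalities (5.8)--(5.11)); it cannot show that $\tilde v_j$ \emph{exceeds} a level $t\sim r_j g^{-1}(x_0,c\,\mu(B_j)/r_j^{\,n-1})$ on a definite proportion of $B_j$, so the subsequent weak Harnack step has nothing to feed on. The paper's lower-bound proof needs none of this apparatus: it tests the equation directly with $\omega=\big(\min\{u,\inf_B u\}-\inf_{2B}u+R\big)\eta^{g^0}$, absorbs the gradient terms with the Caccioppoli estimate (Lemma 5.5), uses $(A_{1,n})$ through Lemma 5.4 to replace $G^+$ by $G(x_0,\cdot)$ (with the constant depending on $\norme{u}_{g_0,B}/|B|$), inverts $g(x_0,\cdot)$ to get (5.1), and sums — no comparison solutions, no weak Harnack, no Lorentz norms. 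Minor further issues: the rescaling $u\mapsto u/\lambda$ is not harmless, since the equation and the structure conditions are not homogeneous in the generalized Orlicz setting; the comparison principle is Lemma 3.8 (not 3.10); and the step $(v_j-u)^+\in W^{1,G(\cdot)}_0(B_j)$ needs a lattice-type justification rather than mere domination by $(v_j-m_j)^+$.
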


\begin{proof}
We set $a = \inf_{2B} u$ and, $b = \inf_{B} u \; , \; v = \min\{u,b\} - a + R\; , \; u_j = \min \{u,j\}$. Choose $\omega = v\eta^{g^0} $ such that $\eta \in C^\infty_0(\frac{3}{2}B)$ with $0 \leq \eta \leq 1$, and $|\nabla \eta| \leq \displaystyle \frac{C}{R} $, we have
$$\begin{array}{ll}
(b-a+R)\mu(B) & \leq  \displaystyle\int_{\frac{3}{2} B}  \omega \, \mathrm{d}\mu \\
& =  \displaystyle \int_{\frac{3}{2} B} \mathcal{A}(x,\nabla u) \cdot \nabla \omega \, \mathrm{d}x \\
&\leq  \displaystyle \int_{\frac{3}{2} B} (\mathcal{A}(x,\nabla u) \cdot \nabla v)\eta^{g^0} \, \mathrm{d}x + \displaystyle \int_{\frac{3}{2} B} (\mathcal{A}(x,\nabla u) \cdot \nabla \eta)\eta^{g^0-1}v \, \mathrm{d}x.\\
\end{array}$$
By the conditions $(a_3)$ and $(SC)$, we have
$$\begin{array}{ll}
I_1 & := \displaystyle \int_{\frac{3}{2} B} (\mathcal{A}(x,\nabla u) \cdot \nabla v)\eta^{g^0} \, \mathrm{d}x\\
& \leq c_2 g^0 \displaystyle \int_{\frac{3}{2} B} G(x, |\nabla v)|) \eta^{g^0} \, \mathrm{d}x.
\end{array}$$
By the conditions $(a_3)$, we have
$$\begin{array}{ll}
I_2 & := \displaystyle \int_{\frac{3}{2} B} (\mathcal{A}(x,\nabla u) \cdot \nabla \eta)\eta^{g^0-1}v \, \mathrm{d}x\\
& \leq c_2\displaystyle \int_{\frac{3}{2} B} g(x, |\nabla v|)|\nabla \eta|\eta^{g^0-1}v \, \mathrm{d}x.
\end{array}$$
As $v \leq b - a + R$ and $|\nabla \eta| < \displaystyle \frac{C}{R}$, we have
$$I_2 \leq C \displaystyle \frac{b-a+R}{R} \displaystyle \int_{\frac{3}{2} B} g(x, |\nabla v|)\eta^{g^0-1} \, \mathrm{d}x.$$
Using inequality $(2.1)$ and the condition $(SC)$, we get
$$I_2 \leq C \left( \displaystyle \int_{\frac{3}{2} B} G(x, |\nabla v|)\eta^{g^0} \, \mathrm{d}x + \displaystyle \int_{\frac{3}{2} B} G\left( x, \frac{b-a+R}{R}\right) \, \mathrm{d}x \right).$$
Collecting the previous estimations of $I_1$ and $I_2$, we obtain
$$(b-a+R)\mu(B) \leq C\left( \displaystyle \int_{\frac{3}{2} B} G(x, |\nabla v|)\eta^{g^0} \, \mathrm{d}x + \displaystyle \int_{\frac{3}{2} B} G\left( x, \frac{b-a+R}{R}\right) \, \mathrm{d}x \right).$$
Or, by Lemma $5.5$, we have
$$\displaystyle \int_{\frac{3}{2} B} G(x,|\nabla( v-(b-a+R))|)\eta^{g^0} \, \mathrm{d}x \leq C \displaystyle \int_{\frac{3}{2} B} G^+\left(\frac{b-a+R-v}{R}\right) \, \mathrm{d}x.$$
Hence,
$$ (b-a+R)\mu(B)\leq C \displaystyle \int_{\frac{3}{2} B} G^+\left(\frac{b-a+R}{R}\right) \, \mathrm{d}x.$$
Since, $L^{G(\cdot)}(B) \subset L^{g_0}(B)$ (see $\cite{ref9}$), we have
$$1 \leq \displaystyle \frac{b-a+R}{R} \leq \displaystyle \frac{b + 1}{R} \leq \displaystyle \frac{\frac{\norme{u}_{g_0,B}}{|B|}+1}{R}.$$
Then by Lemma  $5.4$, there exists a constant $C>0$ depend of $\displaystyle\frac{\norme{u}_{g_0,B}}{|B|}$ such that
$$G^+\left(\frac{b-a+R}{R}\right) \leq C G\left( x_0,\frac{b-a+R}{R}\right). $$
Hence,
$$ (b-a+R)\mu(B) \leq C R^{n} G\left( x_0,\frac{b-a+R}{R}\right).$$
So, by the condition $(SC)$, we have
$$ \mu(B) \leq C R^{n-1} g\left( x_0,\frac{b-a+R}{R}\right).$$
From Remark $2.2$, condition $(SC)$ and inequalities $(2.2)$, $(2.3)$, we have
\begin{equation}
C R g^{-1}\left( x_0,\frac{\mu(B)}{R^{n-1}} \right) \leq \inf_{B}u - \inf_{2B}u + R.
\end{equation}
Let $R_j := 2^{1-j}R$. Iterating inequality $(5.1)$, we get
$$\begin{array}{ll}
C \displaystyle \sum_{j=1}^{\infty} R_j g^{-1}\left( x_0,\frac{\mu(B_j)}{R_j^{n-1}}\right) & \leq \displaystyle \sum_{j=1}^{\infty} (\inf_{B_j} u - \inf_{B_{j-1}} u + R_j)\\
& \leq \displaystyle \lim_{k\rightarrow \infty} (\inf_{B_k} u) - \inf_{2B} u + \sum_{j=1}^{\infty} R_j.
\end{array}$$
As $u$ is lower semicontinuous at $x_0$, then
$$C \displaystyle \sum_{j=1}^{\infty} R_j g^{-1}\left( x_0,\frac{\mu(B_j)}{R_j^{n-1}}\right) \leq u(x_0) - \inf_{2B} u + 2R.$$
Or, by Remark $2.2$, condition $(SC)$ and inequalities $(2.2)$, $(2.3)$, we have
$$\int_{R_{j+1}}^{R_j}  \, g^{-1} \left(x_0,\frac{\mu(B(x_0,s)}{s^{n-1}}\right) \mathrm{d}s \leq C R_jg^{-1}\left( x_0,\frac{\mu(B_j)}{R_j^{n-1}}\right).$$
Hence,
$$W^{\mu}_{G(\cdot)}(x_0,R) \leq C \sum_{j=1}^{\infty} R_j g^{-1}\left( x_0,\frac{\mu(B_j)}{R_j}\right).$$
Therefore,
$$C W^{\mu}_{G(\cdot)}(x_0,R) + \inf_{2B} u - 2R \leq  u(x_0). $$
This gives the claim. 
\end{proof} 

\subsection{Potential upper bounded}
In order to give the potential upper bound estimate, we briefly recall some results on Lorentz spaces $\cite{ref7}$.

\begin{df}
Let $f \in L^0(\Omega)$. For $q > 0$, we define
$$\norme{f}^{q,\infty}_\Omega := \sup_{t > 0} t\left|\{x \in \Omega \; : \; |f(x)| \geq t\}\right|^\frac{1}{q},$$
$$\norme{f}^{q,1}_\Omega := q \int_0^\infty \left|\{x \in \Omega \; : \; |f(x)| \geq t\} \right|^\frac{1}{q} \, \mathrm{d}t.$$
By $L^{q,\infty}(\Omega)$ (resp., $L^{q,1}(\Omega)$), we denote the space of all measurable functions $f$ on $\Omega$ such that $\norme{f}^{q,\infty}_\Omega < \infty$ (resp., $\norme{f}^{q,1}_\Omega < \infty$ ). Such space are called Lorentz space.
\end{df}

\begin{pro}
We have the following properties
\begin{enumerate}
\item $ L^{q,1}(\Omega) \subset L^{q}(\Omega) \subset L^{q,\infty}(\Omega).$
\item For any nonnegative constant $l$,
$$\norme{f^+}^{q,\infty}_\Omega \leq l |\Omega|^\frac{1}{q} + \norme{(f-l)^+}^{q,\infty}_\Omega.$$
\item For $q > 1$ and $f,g \in L^0(\Omega)$, we have the H\"older type inequality
$$\left|\int_\Omega fg  \, \mathrm{d}x\right| \leq \norme{f}^{q^*,1}_\Omega\norme{g}^{q,\infty}_\Omega,$$
where $\frac{1}{q} + \frac{1}{q^*} = 1$.
\end{enumerate}
\end{pro}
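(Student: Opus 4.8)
The plan is to reduce all three items to elementary facts about the distribution function $\lambda_f(t):=|\{x\in\Omega:|f(x)|\ge t\}|$ and, for item~(3), the decreasing rearrangement $f^{*}(s):=\inf\{t\ge0:\lambda_f(t)\le s\}$; nothing beyond the layer-cake formula, Fubini's theorem and the Hardy--Littlewood inequality is required. For item~(1), the inclusion $L^{q}(\Omega)\subset L^{q,\infty}(\Omega)$ is Chebyshev's inequality, $t^{q}\lambda_f(t)\le\int_{\{|f|\ge t\}}|f|^{q}\,\mathrm{d}x\le\norme{f}_{L^{q}(\Omega)}^{q}$, so $\norme{f}^{q,\infty}_{\Omega}\le\norme{f}_{L^{q}(\Omega)}$. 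For $L^{q,1}(\Omega)\subset L^{q}(\Omega)$ I would put $F(s):=q\int_{0}^{s}\lambda_f(t)^{1/q}\,\mathrm{d}t$, so that $F(\infty)=\norme{f}^{q,1}_{\Omega}$, and use monotonicity of $\lambda_f$ to get $F(s)\ge qs\,\lambda_f(s)^{1/q}$; then $\frac{\mathrm{d}}{\mathrm{d}s}F(s)^{q}=q^{2}F(s)^{q-1}\lambda_f(s)^{1/q}\ge q^{q+1}s^{q-1}\lambda_f(s)$ for a.e.\ $s>0$, and integrating over $(0,\infty)$ against the layer-cake identity $\norme{f}_{L^{q}(\Omega)}^{q}=q\int_{0}^{\infty}s^{q-1}\lambda_f(s)\,\mathrm{d}s$ yields $\norme{f}_{L^{q}(\Omega)}\le\frac1q\norme{f}^{q,1}_{\Omega}<\infty$.

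For item~(2) I would use that $\{f^{+}\ge t\}=\{f\ge t\}$ for every $t>0$ and that, when $t>l$, also $\{f\ge t\}=\{(f-l)^{+}\ge t-l\}$ since $l\ge0$, and then split the supremum defining $\norme{f^{+}}^{q,\infty}_{\Omega}$ into $0<t\le l$ and $t>l$. On the first range $t\,\lambda_{f^{+}}(t)^{1/q}\le l\,|\Omega|^{1/q}$; on the second, with $s:=t-l>0$, $t\,\lambda_{f^{+}}(t)^{1/q}=(s+l)\,\lambda_{(f-l)^{+}}(s)^{1/q}\le s\,\lambda_{(f-l)^{+}}(s)^{1/q}+l\,|\Omega|^{1/q}\le\norme{(f-l)^{+}}^{q,\infty}_{\Omega}+l\,|\Omega|^{1/q}$. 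Taking the supremum over $t>0$ gives the inequality (which is trivial when $|\Omega|=\infty$).

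For item~(3) I would chain three standard facts: the Hardy--Littlewood inequality $\int_{\Omega}|fg|\,\mathrm{d}x\le\int_{0}^{\infty}f^{*}(s)g^{*}(s)\,\mathrm{d}s$; the coincidence of the two descriptions of the weak norm, $\norme{g}^{q,\infty}_{\Omega}=\sup_{s>0}s^{1/q}g^{*}(s)$, hence $g^{*}(s)\le\norme{g}^{q,\infty}_{\Omega}\,s^{-1/q}$; and the rearrangement identity $\norme{f}^{q^{*},1}_{\Omega}=q^{*}\int_{0}^{\infty}\lambda_f(t)^{1/q^{*}}\,\mathrm{d}t=\int_{0}^{\infty}s^{1/q^{*}-1}f^{*}(s)\,\mathrm{d}s$, which follows from Fubini after writing $\lambda_f(t)^{1/q^{*}}=\frac{1}{q^{*}}\int_{0}^{\infty}r^{1/q^{*}-1}\mathbf{1}_{\{r<\lambda_f(t)\}}\,\mathrm{d}r$ and using $r<\lambda_f(t)\iff f^{*}(r)>t$. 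Because $1-\tfrac1q=\tfrac1{q^{*}}$, combining these gives $\int_{\Omega}|fg|\,\mathrm{d}x\le\norme{g}^{q,\infty}_{\Omega}\int_{0}^{\infty}s^{1/q^{*}-1}f^{*}(s)\,\mathrm{d}s=\norme{f}^{q^{*},1}_{\Omega}\,\norme{g}^{q,\infty}_{\Omega}$, and $\bigl|\int_{\Omega}fg\,\mathrm{d}x\bigr|\le\int_{\Omega}|fg|\,\mathrm{d}x$ finishes it.

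I do not expect a genuine difficulty: the statements are classical. The only step needing a little care is the rearrangement bookkeeping in item~(3) --- the Hardy--Littlewood inequality and the two rearrangement identities --- which I would either cite verbatim from the Lorentz-space reference already invoked or establish on the spot via the short Fubini/layer-cake computations sketched above; the mismatch between $\{|f|\ge t\}$ and $\{|f|>t\}$ in the various distribution functions is harmless, since these differ only for a null set of values $t$ and hence affect none of the integrals involved.
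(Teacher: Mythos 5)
Your proof is correct. Bear in mind, though, that the paper does not prove Proposition 5.8 at all: it is recalled verbatim from the cited Lorentz-space reference \cite{ref7}, so there is no in-paper argument to compare with. Your self-contained verification follows the standard textbook route — Chebyshev plus the layer-cake/Hardy-type computation for the embeddings in item (1), splitting the supremum at the level $l$ for item (2), and Hardy--Littlewood together with the rearrangement identities for item (3) — and all three arguments are sound with the paper's normalizations; in particular the constant-free form of (3) comes out exactly because $\norme{f}^{q^*,1}_\Omega = q^*\int_0^\infty \lambda_f(t)^{1/q^*}\,\mathrm{d}t = \int_0^\infty s^{1/q^*-1}f^*(s)\,\mathrm{d}s$ with $1-\tfrac1q=\tfrac1{q^*}$. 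One small precision: with the paper's ``$\ge$'' distribution function, the equivalence $r<\lambda_f(t)\iff f^*(r)>t$ you invoke can fail at jump values (the forward implication only yields $f^*(r)\ge t$, as the example of an indicator function shows); but, as you yourself note, this affects at most countably many $t$, so the Fubini identity survives, and the identification $\sup_{t>0}t\,\lambda_g(t)^{1/q}=\sup_{s>0}s^{1/q}g^*(s)$ used for the weak norm needs a one-line limiting argument (approximating $t$ from the left) rather than an ``a.e.'' remark, since it is a supremum rather than an integral — a cosmetic point, not a gap.
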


\begin{lem}
Let $G(\cdot) \in \Phi(\Omega)$ satisfies $(SC)$ and $(A_0)$. If $(t_i)$ is a family in $\R^+$, then there exists a positive constant $C$ such that
$$ g(x,\sup_{i > 0} t_i) \leq C \sup_{i > 0} g(x,t_i).$$
\end{lem}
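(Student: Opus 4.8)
The plan is to reduce the whole statement to a single ``doubling'' inequality for $g$ and then let the supremum pass through at the cost of a constant. Write $T := \sup_{i>0} t_i$. If $T = 0$ both sides vanish; if $T = +\infty$ then both sides equal $+\infty$ (by $(SC)$ one has $G(x,t)/t \geq t^{g_0-1}G(x,1) \to \infty$ as $t \to \infty$, hence $g(x,t) \geq g_0 G(x,t)/t \to \infty$, so $\sup_i g(x,t_i) = +\infty$ whenever $(t_i)$ is unbounded). Thus we may assume $0 < T < \infty$.

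The key step I would carry out is the estimate: for every $x \in \Omega$ and every $\tau > 0$,
$$ g(x,\tau) \;\leq\; g^0\, 2^{\,g^0-1}\; g\!\left(x,\tfrac{\tau}{2}\right). $$
This is proved by chaining three facts from Section~2. By $(SC)$, $g(x,\tau) \leq g^0\, G(x,\tau)/\tau$. By inequality $(2.2)$ with $\sigma = 2$, $G(x,\tau) = G\!\left(x, 2\cdot\tfrac{\tau}{2}\right) \leq 2^{g^0} G\!\left(x,\tfrac{\tau}{2}\right)$. And since $g(x,\cdot)$ is nondecreasing, the integral representation gives $G\!\left(x,\tfrac{\tau}{2}\right) = \int_0^{\tau/2} g(x,s)\,\mathrm{d}s \leq \tfrac{\tau}{2}\, g\!\left(x,\tfrac{\tau}{2}\right)$. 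Combining the three yields the claim with $C := g^0 2^{\,g^0-1}$, a constant depending only on $g^0$; here $(A_0)$ serves only to guarantee that $G(x,\cdot)$, and hence $g(x,\cdot)$, is finite-valued.

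To finish, I would use the definition of the supremum to pick an index $i_0$ with $t_{i_0} > T/2$. Since $g(x,\cdot)$ is nondecreasing and $g(x,t_{i_0}) \leq \sup_{i>0} g(x,t_i)$, this gives $g\!\left(x,\tfrac{T}{2}\right) \leq \sup_{i>0} g(x,t_i)$. Applying the doubling inequality with $\tau = T$ then gives
$$ g\Big(x, \sup_{i>0} t_i\Big) \;=\; g(x,T) \;\leq\; C\, g\!\left(x,\tfrac{T}{2}\right) \;\leq\; C \sup_{i>0} g(x,t_i), $$
which is the assertion.

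The only substantive point is the doubling inequality for $g$; the rest is bookkeeping with the supremum. It is worth emphasizing why a constant $C>1$ is genuinely needed: $g(x,\cdot)$ need not be left-continuous, so when $T$ is a supremum that is not attained one cannot simply write $\sup_i g(x,t_i) = g(x,T)$ — the gap is exactly the size of the jump of $g$ at $T$, and $(SC)$ is precisely the structural hypothesis that keeps that jump uniformly controlled.
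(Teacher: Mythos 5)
Your proof is correct. It differs from the paper's in the way the supremum is approximated: the paper picks $t_{i_0}$ with $\sup_i t_i < t_{i_0}+\epsilon$, uses an $(SC)$-based quasi-subadditivity of $g$ to get $g(x,t_{i_0}+\epsilon)\le \tfrac{g^0}{g_0}2^{g^0-1}\bigl(g(x,t_{i_0})+g(x,\epsilon)\bigr)$ together with $g(x,\epsilon)\le \epsilon^{g_0-1}g(x,1)$ from $(2.3)$, and then lets $\epsilon\to 0$ to remove the error term, ending with the constant $\tfrac{g^0}{g_0}2^{g^0-1}$; you instead use a multiplicative approximation, choosing $t_{i_0}>T/2$ and invoking the doubling inequality $g(x,\tau)\le g^0 2^{g^0-1} g\bigl(x,\tfrac{\tau}{2}\bigr)$, which you derive from $(SC)$, inequality $(2.2)$ with $\sigma=2$, and the monotone integrand bound $G(x,\tau/2)\le \tfrac{\tau}{2}g(x,\tau/2)$. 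Both arguments rest on exactly the same structural facts, but yours dispenses with the limiting step and with any control of $g$ at small arguments, so $(A_0)$ enters only to keep $G$ finite-valued and, in your explicit treatment of the unbounded case, to guarantee $G(x,1)>0$ so that $g(x,t)\to\infty$ (the paper simply declares the infinite case trivial). Your closing observation that a constant $C>1$ is genuinely needed because $g(x,\cdot)$ may jump at an unattained supremum, with $(SC)$ controlling the jump, is accurate and is the same phenomenon the paper's $\epsilon$-argument is designed to absorb.
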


\begin{proof} First case $\sup_{i > 0} t_i < \infty $. For any $\epsilon < 1$, there exists $i_0  > 0$ such that
$$\sup_{i > 0} t_i < t_{i_0} + \epsilon .$$
Then, by the condition $(SC)$, the inequalities $(2.2)$, $(2.3)$ and $(A_0)$, we have
$$\begin{array}{ll}
g(x,\sup_{i > 0} t_i) & \leq g(x, t_{i_0} + \epsilon)\\
& \leq \displaystyle \frac{g^0}{g_0}2^{g^0-1}(g(x, t_{i_0}) +  g(x, \epsilon))\\
& \leq \; \, \displaystyle \frac{g^0}{g_0} 2^{g^0-1}(\sup_{i > 0} g(x,t_i)) + \frac{g^0}{g_0}\epsilon^{g_0-1} g(x, 1)).\\
\end{array}$$
Taking the limit $\epsilon \rightarrow 0$, we get
$$ g(x,\sup_{i > 0} t_i) \leq \displaystyle \frac{g^0}{g_0} 2^{g^0-1}\sup_{i > 0} g(x,t_i),$$
so the claim holds with $C=\displaystyle \frac{g^0}{g_0} 2^{g^0-1}$.\\
Second case $\sup_{i > 0} t_i = \infty $ the inequality is holds. 
\end{proof} 

\begin{Th}
Let $G(\cdot) \in \Phi(\Omega)$ satisfies $(SC)$, $(A_0)$ and $(A_{1,n})$. Let $u$ be a nonnegative weak solution to $(3.2)$ with nonnegative Radon measure $\mu$ in $\Omega$ and $B_R = B(x_0, R) \subset B_{2R} \Subset \Omega$. If $u \in L^{\chi^\prime,\infty}(\frac{1}{2}B)$ with $\chi^\prime = \frac{n(g_0-1)}{n-1}$ and lower semicontinuous at $x_0$ then for any $\gamma > 0$, there exists a positive constant $C = C\left( c_1,c_2,g_0,g^0,\gamma,n,\frac{\norme{u}^{\chi^\prime,\infty}_{\frac{1}{2}B}}{|\frac{1}{2}B|^\frac{1}{\chi^\prime}}\right)$ such that
$$u(x_0) \leq C\left( R+ \left(\dashint_{B\setminus\frac{1}{2}\overline{B}} u^\gamma \, \mathrm{d}x\right)^\frac{1}{\gamma} +  W^{\mu}_{G(\cdot)}(x_0,2R) \right).$$
\end{Th}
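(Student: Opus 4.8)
The plan is to run a Kilpel\"ainen--Mal\'y type iteration along the dyadic balls $B_j:=B(x_0,r_j)$, $r_j:=2^{-j}R$ (so $B_0=B$, $B_1=\tfrac12 B$), splitting $u$ at each scale into an $\mathcal A$--harmonic piece, to be handled by Harnack estimates, and a piece carried by $\mu$, to be handled by the Wolff numbers $W_j:=r_j\,g^{-1}\!\big(x_0,\mu(B_j)/r_j^{\,n-1}\big)$. First, two reductions. Since $\mathcal A$ does not depend on $u$, subtracting a constant leaves $(3.2)$ and $\mu$ unchanged, so one may assume $\essinf_{B_{2R}}u=0$; and since $\rho\mapsto(\dashint_E u^\rho\,\mathrm dx)^{1/\rho}$ is nondecreasing it is enough to prove the estimate for one small $\gamma$, which I would take below $1$ and below the exponent produced by the weak Harnack inequality (for the bounded truncations $\min\{u,k\}$ this is Lemma~$3.5$, valid under $(SC),(A_0),(A_{1,n})$). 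Running the whole argument for $\min\{u,k\}$ and letting $k\to\infty$ at the end keeps all quantities finite and all limits legitimate; and $u$ lower semicontinuous at $x_0$ gives $u(x_0)\le\liminf_{x\to x_0}u(x)\le\lim_{j\to\infty}\essinf_{B_j}u=:E_\infty$, so it suffices to bound $E_\infty$.

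\textbf{One scale.} At a typical scale I would use Theorem~$4.3$ (with zero datum) to get $v_j\in W^{1,G(\cdot)}(B_j)$ solving $-\operatorname{div}\mathcal A(x,\nabla v_j)=0$ in $B_j$ with $v_j-u\in W^{1,G(\cdot)}_0(B_j)$; the weak comparison principle (Lemma~$3.8$, since $0=-\operatorname{div}\mathcal A(x,\nabla v_j)\le\mu$) then gives $0\le v_j\le u$ a.e.\ in $B_j$. As $v_j$ is a nonnegative solution, Moser/De Giorgi--Nash theory (Lemma~$3.3$, available under $(SC),(A_0),(A_{1,n})$) yields
\[
\esssup_{B_{j+1}}v_j\;\le\;C\Big(\dashint_{B_j}(v_j+r_j)^\gamma\,\mathrm dx\Big)^{1/\gamma}\;\le\;C\Big(\dashint_{B_j}(u+r_j)^\gamma\,\mathrm dx\Big)^{1/\gamma},
\]
and the interior H\"older bound makes the oscillation of $v_j$ over $B_{j+1}$ a fixed fraction of its oscillation over $B_j$. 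The complementary, and decisive, ingredient is the \textbf{defect estimate}
\[
\esssup_{B_{j+1}}(u-v_j)\;\le\;C\,W_j+C\,r_j ,
\]
which I would obtain by subtracting the weak formulation of $(3.2)$ for $u$ from that of the homogeneous equation for $v_j$ and testing with truncations $\min\{(u-v_j)^+,\lambda\}$ times powers of a cut--off $\eta_j\in C^\infty_0(B_j)$, combined with the Caccioppoli inequality of Lemma~$5.5$ (for the nonpositive supersolutions $\min\{u-\ell,0\}$): the measure term becomes $\le\lambda\,\mu(B_j)$, while the cut--off error terms $r_j^{-1}\!\int|\mathcal A(x,\nabla u)|\,(u-v_j)^+\eta_j^{g^0-1}$ are reduced, via $(a_3)$, the H\"older inequality in generalized Orlicz spaces (Proposition~$2.4$), inequality $(2.4)$ and Lemma~$5.9$, to an Orlicz energy, which is then transferred onto an inner annulus by the H\"older inequality in Lorentz spaces (Proposition~$5.8$(3), using precisely $u\in L^{\chi',\infty}(\tfrac12 B)$ with $\chi'=\tfrac{n(g_0-1)}{n-1}$). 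Optimizing in $\lambda$ and inverting $g(x_0,\cdot)$ — with $(A_{1,n})$, in the form of Lemma~$5.4$, linking $G^+_{B_j}$, $G^-_{B_j}$ and $G(x_0,\cdot)$, and with the doubling of $g^{-1}(x_0,\cdot)$ coming from Remark~$2.2$ and $(2.2)$--$(2.3)$ — produces $W_j$.

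\textbf{Summation.} Writing $u=v_j+(u-v_j)$ on $B_{j+1}$ and combining $\essinf_{B_{j+1}}v_j\le\dashint_{B_j}(u+r_j)$ with the two displays above, I expect a telescoping recursion, with a fixed $\beta>0$ from De Giorgi--Nash and constants uniform in $j$,
\[
\essinf_{B_{j+1}}u\;\le\;\essinf_{B_j}u\;+\;C\,2^{-\beta j}\Big(\dashint_{B\setminus\frac12\overline{B}}(u+R)^\gamma\,\mathrm dx\Big)^{1/\gamma}\;+\;C\,W_j\;+\;C\,r_j ,
\]
the decaying term carrying the $\mathcal A$--harmonic oscillation propagated from the top scale by Harnack chaining. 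Summing over $j\ge0$: the oscillation terms add up to $C(\dashint_{B\setminus\frac12\overline{B}}(u+R)^\gamma\,\mathrm dx)^{1/\gamma}$; the $r_j$ to $2R$; and, exactly as in the proof of Theorem~$5.6$, $\sum_{j\ge0}W_j$ is dominated by a Riemann sum for $\int_0^{2R}g^{-1}\!\big(x_0,\mu(B(x_0,s))/s^{n-1}\big)\,\mathrm ds=W^{\mu}_{G(\cdot)}(x_0,2R)$, using monotonicity of $s\mapsto\mu(B(x_0,s))$ and of $g^{-1}(x_0,\cdot)$ and $(2.2)$--$(2.3)$. Since $\essinf_{B_0}u\le(\dashint_{B\setminus\frac12\overline{B}}u^\gamma\,\mathrm dx)^{1/\gamma}$ trivially, letting $j\to\infty$ gives
\[
u(x_0)\le E_\infty\le C\Big(R+\Big(\dashint_{B\setminus\frac12\overline{B}}u^\gamma\,\mathrm dx\Big)^{1/\gamma}+W^{\mu}_{G(\cdot)}(x_0,2R)\Big);
\]
passing $k\to\infty$ (monotone convergence, constant unchanged) and recovering a general $\gamma$ from the monotonicity of $\rho\mapsto(\dashint u^\rho\,\mathrm dx)^{1/\rho}$ finishes the proof, with a constant depending only on $n,g_0,g^0,c_1,c_2,\gamma$ and on $\norme{u}^{\chi',\infty}_{\frac12 B}/|\tfrac12 B|^{1/\chi'}$.

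\textbf{Main obstacle.} The hard part is the defect estimate $\esssup_{B_{j+1}}(u-v_j)\lesssim W_j+r_j$. Because $(a_4)$ is only a qualitative monotonicity, no quantitative energy bound for $u-v_j$ is directly available, so $\mu$ has to be fed in through truncated test functions, and the resulting cut--off/tail integrals — controlled by plain Sobolev embeddings in the classical setting — must here be absorbed by the Lorentz--space H\"older inequality; this is exactly why the critical weak--type hypothesis $u\in L^{\chi',\infty}(\tfrac12 B)$ is imposed. Since moreover $x\mapsto G(x,t)$ is only measurable, every test function and every Caccioppoli manipulation must be routed through the envelopes $G^\pm$, and passing between those and $G(x_0,\cdot)$ costs the condition $(A_{1,n})$ via Lemma~$5.4$. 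A secondary difficulty is the summation bookkeeping, which must be organized (choice of the dyadic ratio, use of the weak Harnack inequality and of Harnack chaining) so that the $\mathcal A$--harmonic contributions genuinely telescope onto the single annulus $B\setminus\tfrac12\overline{B}$.
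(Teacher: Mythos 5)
Your overall architecture (dyadic balls, harmonic replacement, Wolff numbers $W_j=r_j\,g^{-1}(x_0,\mu(B_j)/r_j^{n-1})$, summation to the Wolff integral) is the Kilpel\"ainen--Mal\'y/Trudinger--Wang scheme, but the step your whole iteration rests on --- the ``defect estimate'' $\esssup_{B_{j+1}}(u-v_j)\le C\,W_j+C\,r_j$ --- is asserted, not proved, and the sketch you give cannot deliver it. Testing the difference of the two weak formulations with $\min\{(u-v_j)^+,\lambda\}$ (times cut-offs) makes the measure term $\le\lambda\,\mu(B_j)$, but the left-hand side is $\int(\mathcal A(x,\nabla u)-\mathcal A(x,\nabla v_j))\cdot\nabla(u-v_j)$ on a level set, and under the hypotheses of this paper $(a_4)$ is only \emph{qualitative} monotonicity: this pairing does not dominate any Orlicz energy $G(x,|\nabla(u-v_j)|)$, so what you extract is at best level-set (weak-type) information about $u-v_j$, never an $L^\infty$ bound. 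Upgrading weak-type to $\esssup$ would require a De Giorgi iteration for $u-v_j$, hence a Caccioppoli inequality at every level for the \emph{difference}, which the structure conditions do not provide; indeed a sup bound $u-v_j\lesssim W_j$ on the half ball is essentially the theorem itself applied to the pair $(u,v_j)$, so the argument is circular at its pivotal point. The secondary step is also shaky: the claimed recursion for $\essinf_{B_{j+1}}u$ with a geometrically decaying term ``propagated by Harnack chaining'' onto the single annulus $B\setminus\frac12\overline B$ is not justified, since the harmonic replacements change from scale to scale and nothing forces their oscillations to telescope onto the top annulus.

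The paper's proof is organized precisely so that no sup bound on a difference is ever needed. The Poisson modification $v$ is taken only in the annulus $A=B\setminus\frac12\overline B$ (so $v=u$ on $\frac12B$), and the first step is a flux bound $\int_B\mathcal A(x,\nabla v)\cdot\nabla\Psi\,\mathrm dx\le2\mu(\overline B)$ for cut-offs $\Psi$, obtained with the truncations $I_\epsilon(u-v)$. This flux bound, fed into test functions $\min\{(v^+-l)^+,k\}$ together with the $W^{1,1}$-Sobolev inequality applied to $G^-$ of the truncations (Lemmas 5.3, 5.4, 5.9), yields a \emph{weak-type} estimate: inequality (5.6), a bound on $\|u\|^{\chi',\infty}_{\frac12B}/|\frac12B|^{1/\chi'}$ by the annulus average plus $R\,g^{-1}(x_0,\mu(\overline B)/R^{n-1})+R$. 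The iteration is then run on levels $l_j$ defined through the Lorentz quasinorm of $u-l_j$ on $B_{j+1}$, where Chebyshev-type manipulations and a small parameter $\delta$ absorb the annulus averages into the previous increment, so the weak-type information alone suffices; lower semicontinuity at $x_0$ and the Riemann-sum comparison with $\int_0^{2R}g^{-1}(x_0,\mu(B(x_0,s))/s^{n-1})\,\mathrm ds$ finish the argument. If you want to salvage your route, you would have to either prove a genuine comparison estimate for $u-v_j$ under quantitative monotonicity assumptions stronger than $(a_4)$, or reorganize the one-scale step, as the paper does, into a weak-type estimate for $u$ itself and iterate in levels rather than in $\essinf$'s.
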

\begin{proof}
Let $u$ be a nonnegative solution to equation $(3.2)$ and $A = B \setminus \frac{1}{2}\overline{B}$. As in $\cite{ref4}$, there exists $v \in W^{1,G(\cdot)}(A)$ be the solution to equation
$$- \text{div} \mathcal{A}(x,\nabla v) = 0 \; \text{in} \; A,$$
such that $v-u \in W^{1,G(\cdot)}_0(A).$\\
\textbf{First step:} Fix $\Psi \in W^{1,G(\cdot)}_0(\frac{3}{4}B)$ such that $0 \leq \Psi \leq 1$. Extend $\Psi$ as $\Psi = 0$ outside of $\frac{3}{4}B$ and $v$ as $v=u$ outside of $A$, we show the following inequality
\begin{equation}
\int_{B} \mathcal{A}(x,\nabla v) \cdot \nabla \Psi \, \mathrm{d}x \leq 2\mu(\overline{B}).
\end{equation}
Indeed, from the definition of $u$ and $v$, we have
\begin{equation}
0 \leq \int_{A}  \varphi \, \mathrm{d}\mu =  \int_{A} (\mathcal{A}(x,\nabla u)- \mathcal{A}(x,\nabla v)) \cdot \nabla \varphi \, \mathrm{d}x.
\end{equation}
for any nonnegative $\varphi \in W^{1,G(\cdot)}_0(A)$. So, by Lemma $3.8$ and inequality $(5.3)$, we have $v \leq u$, a.e in $A$. Using the inequality $(5.3)$  with $\varphi = I_\epsilon(u-v)\Psi$ where $I_\epsilon(t) = \epsilon^{-1}\min\{t,\epsilon\}$, we obtain
$$\begin{array}{ll}
& \displaystyle \int_{A} \left((\mathcal{A}(x,\nabla v)- \mathcal{A}(x,\nabla u)) \cdot \nabla \Psi\right) I_\epsilon(u-v) \, \mathrm{d}x \\
& \leq \displaystyle \int_{A} ((\mathcal{A}(x,\nabla u)- \mathcal{A}(x,\nabla v)) \cdot \nabla I_\epsilon(u-v)) \Psi \, \mathrm{d}x\\
& \leq \displaystyle \int_{A} (\mathcal{A}(x,\nabla u)- \mathcal{A}(x,\nabla v)) \cdot \nabla I_\epsilon(u-v) \, \mathrm{d}x.
\end{array}$$
Again, we use the inequality $(5.3)$ with $\varphi = I_\epsilon(u-v)$, we get
$$\displaystyle \int_{A} (\mathcal{A}(x,\nabla u)- \mathcal{A}(x,\nabla v)) \cdot \nabla I_\epsilon(u-v) \, \mathrm{d}x \leq \displaystyle \int_{B}  I_\epsilon(u-v) \, \mathrm{d}\mu \leq \mu(\overline{B}).$$
Hence,
$$\displaystyle \int_{A} \left((\mathcal{A}(x,\nabla v)- \mathcal{A}(x,\nabla u)) \cdot \nabla \Psi\right) I_\epsilon(u-v) \, \mathrm{d}x \leq \mu(\overline{B}).$$
Take the limit $\epsilon \rightarrow 0$, we get
$$\int_{\{x \in A \, : \, u(x) > v(x)\}} (\mathcal{A}(x,\nabla v)- \mathcal{A}(x,\nabla u)) \cdot \nabla \Psi \, \mathrm{d}x  \leq \mu(\overline{B}).$$
Since, $\nabla u = \nabla v$ a.e on $\{x \in A \, : \, u(x) = v(x)\} $ and, $u \geq v$ a.e in $A$, $v=u$ outside of $A$, then
\begin{equation}
\int_{B} (\mathcal{A}(x,\nabla v)- \mathcal{A}(x,\nabla u)) \cdot \nabla \Psi \, \mathrm{d}x  \leq \mu(\overline{B}).
\end{equation}
On the other hand, as $\Psi \in W^{1,G(\cdot)}_0(B)$. Then, by the definition of solution $u$, we have
\begin{equation}
\int_{B} \mathcal{A}(x,\nabla u) \cdot \nabla \Psi \, \mathrm{d}x = \int_{B}  \Psi \, \mathrm{d}\mu \leq \mu(\overline{B}).
\end{equation}
Combining the two inequalities $(5.4)$ and $(5.5)$, we obtain the inequality $(5.2)$
$$ \int_{B} \mathcal{A}(x,\nabla v) \cdot \nabla \Psi \, \mathrm{d}x \leq 2\mu(\overline{B}).$$
\textbf{Second step:} We show the following inequality
\begin{equation}
\begin{array}{ll}
 & \displaystyle \frac{1}{|\frac{1}{2}B|^{\frac{1}{\chi^\prime}}}\norme{u}^{\chi^\prime,\infty}_{\frac{1}{2}B}\\
 & \leq C \left( \left(\displaystyle \dashint_{B \setminus\frac{1}{2}B} u^\gamma \, \mathrm{d}x\right)^\frac{1}{\gamma} + R g^{-1}\left( x_0,\displaystyle\frac{\mu(\overline{B})}{R^{n-1}}\right)+R \right),
 \end{array}
\end{equation}
where $\chi^\prime = \frac{n}{n-1}(g_0-1)$. In one hand, we have $v$ is a solution in $A$ to equation $(3.1)$, then, by $\cite{ref3}$, we have $v$ is locally bounded in $A$. So, by Lemma $3.3$
\begin{equation}
\esssup_{S} v^+  \leq C\left(\displaystyle \dashint_{A} \overline{v}^\gamma \, \mathrm{d}x\right)^\frac{1}{\gamma},
\end{equation}
where $S= \cup_{x \in \partial \frac{3}{4}B}B(x,\frac{R}{8})$ and $\overline{v}=v^+ + R$. Denoting $l = \esssup_{S} v^+$, then, by the conditions $(SC)$ and $(a_2)$, for any positive constant $k$, we have
$$\begin{array}{ll}
& c_1g_0 \displaystyle \int_{B} G\left( x,|\nabla \min\{(v^+ - l)^+,k\}|\right) \, \mathrm{d}x\\ 
& \leq c_1 \displaystyle \int_{B} g(x, |\nabla \min\{(v^+ - l)^+,k\}|) |\nabla \min\{(v^+ - l)^+,k\}| \, \mathrm{d}x\\
& \leq \displaystyle \int_{B} \mathcal{A}(x,\nabla \min\{(v^+ - l)^+,k\}) \cdot \nabla \min\{(v^+ - l)^+,k\} \, \mathrm{d}x\\
& \leq \displaystyle \int_{B} \mathcal{A}(x,\nabla v) \cdot \nabla \min\{(v^+ - l)^+,k\} \, \mathrm{d}x.
\end{array}$$
Note that $(v^+ - l)^+=0$, a.e on $S$, so $(v^+ - l)^+ \in W^{1,G(\cdot)}_0(\frac{3}{4}B)$. Then, by the inequality $(5.2)$ for $\Psi = k^{-1}\min\{(v^+ - l)^+,k\} \in W^{1,G(\cdot)}_0(\frac{3}{4}B)$, we get
$$\displaystyle \int_{B} \mathcal{A}(x,\nabla v) \cdot \nabla \min\{(v^+ - l)^+,k\} \, \mathrm{d}x \leq 2k\mu(\overline{B}).$$
Hence,
\begin{equation}
\int_{B} G\left( x,|\nabla \min\{(v^+ - l)^+,k\}|\right) \, \mathrm{d}x \leq Ck\mu(\overline{B}).
\end{equation}
In the other hand, by the Sobolev inequality for the function $G^-\left(\min \{ \displaystyle \frac{(v^+ - l)^+}{R} , \displaystyle \frac{k}{R} \} \right) \in W^{1,1}(\frac{3}{4}B)$, there exists a constant $C>0$ such that
$$\begin{array}{ll}
& \left( \displaystyle \dashint_{\frac{3}{4}B} G^-\left(\min\{\frac{(v^+ - l)^+}{R},\frac{k}{R}\}\right)^\chi \, \mathrm{d}x\right)^\frac{1}{\chi}\\ 
& \leq C R \displaystyle \dashint_{\frac{3}{4}B} \left|\nabla G^-\left(\min\{\frac{(v^+ - l)^+}{R},\frac{k}{R}\}\right)\right| \, \mathrm{d}x\\
& \leq C R\displaystyle \dashint_{\frac{3}{4}B} \tilde{g}\left(\min\{\frac{(v^+ - l)^+}{R},\frac{k}{R}\}\right) \left|\nabla \min\{\frac{(v^+ - l)^+}{R},\frac{k}{R}\} \right|  \, \mathrm{d}x \\
& \leq \displaystyle C\displaystyle \dashint_{\frac{3}{4}B} \tilde{g}\left(\min\{\frac{(v^+ - l)^+}{R},\frac{k}{R}\}\right) \left|\nabla \min\{(v^+ - l)^+,k\} \right|  \, \mathrm{d}x.
\end{array}$$
where $\chi := 1^* = \displaystyle \frac{n}{n-1}$.\\ 
Using inequality $(2.1)$, for $a=\min\{\displaystyle \frac{(v^+ - l)^+}{R},\displaystyle \frac{k}{R}\}$ and $b=2g^0C|\nabla \min\{(v^+ - l)^+,k\}|$, we have
$$\begin{array}{ll}
& \left( \displaystyle \dashint_{\frac{3}{4}B} G^-\left(\min\{\frac{(v^+ - l)^+}{R},\frac{k}{R}\}\right)^\chi \, \mathrm{d}x\right)^\frac{1}{\chi}\\
& \leq \displaystyle \frac{1}{2g^0}\dashint_{\frac{3}{4}B} \tilde{g}\left(\min\{\frac{(v^+ - l)^+}{R},\frac{k}{R}\}\right) \left|\min\{\frac{(v^+ - l)^+}{R},\frac{k}{R}\} \right|  \, \mathrm{d}x\\
&\; \;  + \displaystyle \frac{1}{2g^0} \dashint_{\frac{3}{4}B} \tilde{g}\left( C\left| \nabla \min\{(v^+ - l)^+,k\}\right|\right) C\left| \nabla \min\{(v^+ - l)^+,k\}\right|  \, \mathrm{d}x\\
& \leq \displaystyle \frac{1}{2}\dashint_{\frac{3}{4}B} G^-\left(\min\{\frac{(v^+ - l)^+}{R},\frac{k}{R}\}\right) \, \mathrm{d}x + \displaystyle \frac{C^{g^0}}{2} \dashint_{\frac{3}{4}B} G^-\left( \left| \nabla \min\{(v^+ - l)^+,k\}\right|\right)  \, \mathrm{d}x.\\
\end{array}$$
As,
$$\displaystyle \dashint_{\frac{3}{4}B} G^-\left(\min\{\frac{(v^+ - l)^+}{R},\frac{k}{R}\}\right) \, \mathrm{d}x \leq \left( \displaystyle \dashint_{\frac{3}{4}B} G^-\left(\min\{\frac{(v^+ - l)^+}{R},\frac{k}{R}\}\right)^\chi \, \mathrm{d}x\right)^\frac{1}{\chi}.$$
Then, there exists a constant $C>0$ such that
\begin{equation}
\begin{array}{ll}
& \left( \displaystyle \dashint_{\frac{3}{4}B} G^-\left(\min\{\frac{(v^+ - l)^+}{R},\frac{k}{R}\}\right)^\chi \, \mathrm{d}x\right)^\frac{1}{\chi}\\
& \leq C \displaystyle \dashint_{\frac{3}{4}B} G^-\left( \left| \nabla \min\{(v^+ - l)^+,k\}\right|\right)  \, \mathrm{d}x.
\end{array}
\end{equation}
Combining the inequalities $(5.8)$ and $(5.9)$, we obtain
\begin{equation}
 \left(\displaystyle \dashint_{\frac{3}{4}B} G^-\left(\min\{\frac{(v^+ - l)^+}{R},\frac{k}{R}\}\right)^\chi \, \mathrm{d}x\right)^\frac{1}{\chi} \leq  C k \displaystyle \frac{\mu(\overline{B})}{R^{n}}.
\end{equation}
Otherwise, by Lemma $5.3$, we have
$$\begin{array}{ll}
& \left( \displaystyle \dashint_{\frac{3}{4}B} G^-\left(\min\{\frac{(v^+ - l)^+}{R},\frac{k}{R}\}\right)^\chi \, \mathrm{d}x\right)^\frac{1}{\chi}\\ 
& \geq \left(\displaystyle \dashint_{ \{x \in \frac{1}{2}B \, : \,  (v^+ - l)^+ \geq k \} } G^-\left(\min\{\frac{(v^+ - l)^+}{R},\frac{k}{R}\}\right)^\chi \, \mathrm{d}x\right)^\frac{1}{\chi}\\
& \geq G^-\left(\displaystyle \frac{k}{R}\right) \displaystyle \frac{\left|\{ x \in \frac{1}{2}B \, : \,  (v^+ - l)^+ \geq k\}\right|^\frac{1}{\chi}}{|\frac{1}{2}B|^\frac{1}{\chi}}\\
& \geq \displaystyle \frac{1}{g^0}\frac{k}{R} \tilde{g}\left(\displaystyle \frac{k}{R}\right) \displaystyle \frac{\left|\{ x \in \frac{1}{2}B \, : \,  (v^+ - l)^+ \geq k \}\right|^\frac{1}{\chi}}{|\frac{1}{2}B|^\frac{1}{\chi}}.
\end{array}$$
Then, by the inequality $(5.10)$, we have
$$\tilde{g}\left(\frac{k}{R}\right) \frac{\left|\{ x \in \frac{1}{2}B \, : \,  (v^+ - l)^+ \geq k \}\right|^\frac{1}{\chi}}{|\frac{1}{2}B|^\frac{1}{\chi}} \leq C \frac{\mu(\overline{B})}{R^{n-1}}.$$
So,
$$\tilde{g}\left(\frac{k}{R}\right) \left(\frac{\left|\{ x \in \frac{1}{2}B \, : \,  (v^+ - l)^+ \geq k \}\right|^\frac{1}{\chi^\prime}}{|\frac{1}{2}B|^\frac{1}{\chi^\prime}}\right)^{g_0-1} \leq C \frac{\mu(\overline{B})}{R^{n-1}}.$$
where $\chi^\prime = \chi(g_0-1)$. Using inequality $(2.3)$, we get
$$\tilde{g}\left(\frac{k}{R} \frac{\left|\{ x \in \frac{1}{2}B \, : \,  (v^+ - l)^+ \geq k \}\right|^\frac{1}{\chi^\prime}}{|\frac{1}{2}B|^\frac{1}{\chi^\prime}}\right) \leq C \frac{\mu(\overline{B})}{R^{n-1}}.$$
Using Lemma $5.9$ and definition of Lorentz norms, we obtain
\begin{equation}
\tilde{g}\left(\frac{\norme{(v^+ - l)^+}^{\chi^\prime,\infty}_{\frac{1}{2}B}}{R|\frac{1}{2}B|^\frac{1}{\chi^\prime}}\right) \leq  C \frac{\mu(\overline{B})}{R^{n-1}}.
\end{equation}
Since,
$$1 \leq \displaystyle \frac{\norme{(v^+ - l)^+}^{\chi^\prime,\infty}_{\frac{1}{2}B}}{R|\frac{1}{2}B|^\frac{1}{\chi^\prime}} + 1 \leq \displaystyle \frac{\norme{v^+}^{\chi^\prime,\infty}_{\frac{1}{2}B}}{R|\frac{1}{2}B|^\frac{1}{\chi^\prime}} + 1 \leq \displaystyle \frac{\frac{\norme{u^+}^{\chi^\prime,\infty}_{\frac{1}{2}B}}{|\frac{1}{2}B|^\frac{1}{\chi^\prime}} + 1}{R}.$$
Then, by Lemma $5.4$, there exists a constant C depend of $\displaystyle \frac{\norme{u^+}^{\chi^\prime,\infty}_{\frac{1}{2}B}}{|\frac{1}{2}B|^\frac{1}{\chi^\prime}}$ such that
$$G\left( x_0,\frac{\norme{(v^+ - l)^+}^{\chi^\prime,\infty}_{\frac{1}{2}B}}{R|\frac{1}{2}B|^\frac{1}{\chi^\prime}} + 1\right) \leq C G^-\left(\frac{\norme{(v^+ - l)^+}^{\chi^\prime,\infty}_{\frac{1}{2}B}}{R|\frac{1}{2}B|^\frac{1}{\chi^\prime}}+1\right).$$
So, by the condition $(SC)$ and Lemma $5.3$, we have
$$g\left( x_0,\frac{\norme{(v^+ - l)^+}^{\chi^\prime,\infty}_{\frac{1}{2}B}}{R|\frac{1}{2}B|^\frac{1}{\chi^\prime}}+1 \right) \leq C \tilde{g}\left(\frac{\norme{(v^+ - l)^+}^{\chi^\prime,\infty}_{\frac{1}{2}B}}{R|\frac{1}{2}B|^\frac{1}{\chi^\prime}}+1\right).$$
Hence, by the condition $(A_0)$ and the inequality $(5.11)$, we have
$$g\left( x_0,\frac{\norme{(v^+ - l)^+}^{\chi^\prime,\infty}_{\frac{1}{2}B}}{R|\frac{1}{2}B|^\frac{1}{\chi^\prime}} \right) \leq C \tilde{g}\left(\frac{\norme{(v^+ - l)^+}^{\chi^\prime,\infty}_{\frac{1}{2}B}}{R|\frac{1}{2}B|^\frac{1}{\chi^\prime}}\right) + C\tilde{g}(1) \leq C\left(\frac{\mu(\overline{B})}{R^{n-1}} + 1\right).$$
So,
$$\frac{\norme{(v^+ - l)^+}^{\chi^\prime,\infty}_{\frac{1}{2}B}}{|\frac{1}{2}B|^\frac{1}{\chi^\prime}} \leq  R g^{-1}\left( x_0,C\left(\frac{\mu(\overline{B})}{R^{n-1}}+1\right)\right).$$
From Remark $2.2$, the condition$(SC)$, inequalities $(2.2)$, $(2.3)$ and the condition $(A_0)$, we have
$$\frac{\norme{(v^+ - l)^+}^{\chi^\prime,\infty}_{\frac{1}{2}B}}{|\frac{1}{2}B|^\frac{1}{\chi^\prime}} \leq CR g^{-1}\left( x_0,\left(\frac{\mu(\overline{B})}{R^{n-1}}\right) \right) + CRg^{-1}(1) \leq  CR g^{-1}\left( x_0,\left(\frac{\mu(\overline{B})}{R^{n-1}}\right)\right) + CR.$$
Using Proposition $5.8$ and inequality $(5.7)$, we obtain
$$\begin{array}{ll}
 \displaystyle \frac{1}{|\frac{1}{2}B|^{\frac{1}{\chi^\prime}}}\norme{v^+}^{\chi^\prime,\infty}_{\frac{1}{2}B} & \leq l + \displaystyle \frac{1}{|\frac{1}{2}B|^{\frac{1}{\chi^\prime}}}\norme{(v^+ - l)^+}^{\chi^\prime,\infty}_{\frac{1}{2}B}\\
 & \leq C\left(\displaystyle \dashint_{A} \overline{v}^\gamma \, \mathrm{d}x\right)^\frac{1}{\gamma} + C R g^{-1}\left( x_0,\displaystyle \frac{\mu(\overline{B})}{R^{n-1}}\right) + CR.
\end{array}$$
Since $u=v$ in $\displaystyle \frac{1}{2}B$ and $v \leq u$ in $B$, we get the inequality $(5.6)$
$$ \displaystyle \frac{1}{|\frac{1}{2}B|^{\frac{1}{\chi^\prime}}}\norme{u}^{\chi^\prime,\infty}_{\frac{1}{2}B} \leq C\left(\displaystyle \dashint_{B \setminus \frac{1}{2}B} u^\gamma \, \mathrm{d}x\right)^\frac{1}{\gamma} + C R g^{-1}\left( x_0,\frac{\mu(\overline{B})}{R^{n-1}}\right) + CR.$$
\textbf{Iteration step:} We show the following inequality
\begin{equation}
u(x_0) \leq C\left( R + \left(\dashint_{B\setminus\frac{1}{2}B} u^\gamma \, \mathrm{d}x\right)^\frac{1}{\gamma} +  W^{\mu}_{G(\cdot)}(x_0,2R)\right).
\end{equation}
Let $B_0=B_R$, for $j = 0,1,...,$ take  $R_j = 2^{-j}R, \; B_j = B_{R_j}$. Also, for $\delta \in (0,1)$  we consider a sequence
$$l_0 := 0, \; l_{j+1} := l_j + \frac{1}{\delta^\frac{1}{\chi^\prime}}\frac{1}{|B_{j+1}|^\frac{1}{\chi^\prime}}\norme{u-l_j}^{\chi^\prime,\infty}_{B_{j+1}}.$$
By the definition of $l_j$ and the inequality $(5.5)$ for $(u-l_j)$, we have
\begin{equation}
\begin{array}{ll}
& l_{j+1} - l_j \\
& = \displaystyle \frac{1}{\delta^\frac{1}{\chi^\prime}}\frac{1}{|B_{j+1}|^\frac{1}{\chi^\prime}}\norme{u-l_j}^{\chi^\prime,\infty}_{B_{j+1}}\\
& \leq  \displaystyle \frac{C}{\delta^\frac{1}{\chi^\prime}}\left( \left( \displaystyle \dashint_{B_j\setminus B_{j+1}} (u - l_j)^{\gamma} \, \mathrm{d}x\right)^\frac{1}{\gamma} + R_j g^{-1}\left( x_0,\displaystyle \frac{\mu(\overline{B_j})}{R_j^{n-1}}\right) + R_j \right)\\
& \leq  \displaystyle \frac{C}{\delta^\frac{1}{\chi^\prime}}\left( \left( \displaystyle \dashint_{B_j} (u  -l_j)^{\gamma} \, \mathrm{d}x\right)^\frac{1}{\gamma} + R_j g^{-1}\left( x_0,\displaystyle \frac{\mu(\overline{B_j})}{R_j^{n-1}}\right) + R_j\right).
\end{array}
\end{equation}
If, we choose $\gamma \leq \chi^\prime $, then by Proposition $5.8$, for $q=\displaystyle \frac{\chi^\prime}{\gamma}$, we have
$$\begin{array}{ll}
\left(\displaystyle \dashint_{B_j} (u - l_j)^{\gamma} \, \mathrm{d}x\right)^{\frac{1}{\gamma}} & = \left( \displaystyle \dashint_{\{x \in B_j\; ,: \; u > l_j \}} (u - l_j)^{\gamma} \, \mathrm{d}x\right)^{\frac{1}{\gamma}}\\
& \leq C  \frac{1}{|B_j|^\frac{1}{\gamma}} \left|\{ x\in B_j \; : \; u(x) \geq l_j \}\right|^{\frac{1}{(\frac{\chi^\prime}{\gamma})^*}\frac{1}{\gamma}}\left(\norme{(u - l_j)^{\gamma}}^{\frac{\chi^\prime}{\gamma},\infty}_{B_{j}}\right)^{\frac{1}{\gamma}}\\
& \leq C  \frac{1}{|B_j|^\frac{1}{\gamma}} \left|\{ x \in B_j \; : \; u(x) \geq l_j \}\right|^{\frac{1}{(\frac{\chi^\prime}{\gamma})^*}\frac{1}{\gamma}}\norme{u - l_j}^{\chi^\prime,\infty}_{B_{j}}.
\end{array}$$
Since,
$$\begin{array}{ll}
|\{x\in B_j \; : \; u(x) \geq l_j \}| & = |\{x\in B_j \; : \; u(x)- l_{j-1} \geq l_j - l_{j-1}\}| \\
&= \left(|\{x\in B_j \; : \; \frac{u(x)- l_{j-1}}{l_j - l_{j-1}} \geq 1 \}|^{\frac{1}{\chi^\prime}}\right)^{\chi^\prime}\\
&\leq \left(\displaystyle\frac{1}{l_j - l_{j-1}}\norme{u-l_{j-1}}^{\chi^\prime,\infty}_{B_{j}}\right)^{\chi^\prime}\\
& \leq \delta |B_j|.
\end{array}$$
Moreover, by the definition of $l_j$, we have
$$\norme{u-l_{j-1}}^{\chi^\prime,\infty}_{B_j}= (l_j - l_{j-1})\delta^\frac{1}{\chi^\prime} |B_{j}|^\frac{1}{\chi^\prime}.$$
Then,
$$\begin{array}{ll}
\left(\displaystyle \dashint_{B_j} (u - l_j)^{\gamma} \, \mathrm{d}x\right)^{\frac{1}{\gamma}} & \leq C\delta^{\frac{1}{\gamma}\frac{1}{(\frac{\chi^\prime}{\gamma})^*}}\frac{1}{|B_j|^\frac{1}{\chi^\prime}}\norme{u - l_j}^{\chi^\prime,\infty}_{B_{j}}\\
& \leq C\delta^{\frac{1}{\gamma}\frac{1}{(\frac{\chi^\prime}{\gamma})^*}}\frac{1}{|B_j|^\frac{1}{\chi^\prime}}\norme{u - l_{j-1}}^{\chi^\prime,\infty}_{B_{j}}\\
& \leq C\delta^{\frac{1}{\gamma}\frac{1}{(\frac{\chi^\prime}{\gamma})^*}+\frac{1}{\chi^\prime}}(l_j - l_{j-1})\\
& \leq C \delta^\frac{1}{\gamma}(l_j - l_{j-1}).
\end{array}$$
Hence, by the inequality $(5.13)$, we have
$$l_{j+1} - l_j \leq C\delta^{\frac{1}{\gamma}-\frac{1}{\chi^\prime}}(l_j - l_{j-1}) + \frac{C}{\delta^\frac{1}{\chi^\prime}} R_j g^{-1}\left( x_0,\frac{\mu(\overline{B_j})}{R_j^{n-1}}\right) + CR_j.$$
We choose $\delta > 0$ such that $C\delta^{\frac{1}{\gamma} - \frac{1}{\chi^\prime}} \leq \displaystyle \frac{1}{2}$, we get
$$l_{j+1} - l_j \leq \frac{1}{2}(l_j - l_{j-1}) + C R_j g^{-1}\left( x_0,\frac{\mu(\overline{B_j})}{R_j^{n-1}}\right) + CR_j.$$
Hence,
$$l_{k+1} - l_1 = \displaystyle \sum_{j=1}^{j=k}(l_{j+1} - l_{j}) \leq \frac{1}{2}(l_k - l_0) +  C\sum_{j=1}^{j=k} R_jg^{-1}\left( x_0,\frac{\mu(\overline{B_j})}{R_j^{n-1}}\right) +  C\sum_{j=1}^{j=k} R_j.$$
By the definition of $l_1$ and inequality $(5.6)$, we have
$$\begin{array}{ll}
l_1 & = \displaystyle \frac{1}{\delta^\frac{1}{\chi^\prime}}\frac{1}{|B_{1}|^\frac{1}{\chi^\prime}}\norme{u}^{\chi^\prime,\infty}_{B_{1}}\\
& \leq \displaystyle \frac{C}{\delta^\frac{1}{\chi^\prime}}\left(\left(\displaystyle \dashint_{B_0\setminus B_{1}} u^{\gamma} \, \mathrm{d}x\right)^\frac{1}{\gamma} + R_0 g^{-1}\left( x_0,\frac{\mu(\overline{B_0})}{R_0^{n-1}}\right) + R_0 \right).
\end{array}$$
So,
$$l_{k+1} \leq \frac{1}{2}l_k + C\left(\displaystyle \dashint_{B_0\setminus B_{1}} u^{\gamma} \, \mathrm{d}x\right)^\frac{1}{\gamma}  + C\sum_{j=0}^{j=k} R_jg^{-1}\left( x_0,\frac{\mu(\overline{B_j})}{R_j^{n-1}}\right) +  C\sum_{j=0}^{j=k} R_j.$$
Taking the limit $k \rightarrow \infty$, then
$$\frac{1}{2}l_\infty \leq C\left(\displaystyle \dashint_{B_0\setminus B_{1}} u^{\gamma} \, \mathrm{d}x\right)^\frac{1}{\gamma}  + C\sum_{j=0}^{\infty} R_jg^{-1}\left( x_0,\frac{\mu(\overline{B_j})}{R_j^{n-1}}\right) +  C \displaystyle \sum_{j=0}^{\infty} R_j.$$
where $l_\infty = \lim_{k\rightarrow \infty}l_k$. Or, by Proposition $5.8$ and the definition of $l_j$, we have
$$\begin{array}{ll}
 \frac{1}{|B_{j}|^\frac{1}{\chi^\prime}}\norme{u}^{\chi^\prime,\infty}_{B_{j}} & \leq l_\infty + \frac{1}{|B_{j}|^\frac{1}{\chi^\prime}}\norme{u-l_\infty}^{\chi^\prime,\infty}_{B_{j}}\\
& \leq l_\infty + \frac{1}{|B_{j}|^\frac{1}{\chi^\prime}}\norme{u-l_{j-1}}^{\chi^\prime,\infty}_{B_{j}} \\
& \leq l_\infty + \delta^\frac{1}{\chi^\prime}(l_j - l_{j-1}).
\end{array}$$
Taking the upper limit, we obtain
$$\begin{array}{ll}
\limsup_{j \rightarrow \infty} \frac{1}{|B_{j}|^\frac{1}{\chi^\prime}}\norme{u}^{\chi^\prime,\infty}_{B_{j}} & \leq l_\infty \\
& \leq C\left( R + \left(\displaystyle \dashint_{B_0\setminus B_{1}} u^{\gamma} \, \mathrm{d}x\right)^\frac{1}{\gamma} +  \displaystyle \sum_{j=0}^{\infty} R_jg^{-1}\left( x_0,\frac{\mu(\overline{B_j})}{R_j^{n-1}}\right)\right).
\end{array}$$
As $u$ is lower semicontinuous at $x_0$, then
$$u(x_0) \leq C \left(R + \left( \displaystyle \dashint_{B_0\setminus B_{1}} u^{\gamma} \, \mathrm{d}x\right)^\frac{1}{\gamma} + \sum_{j=0}^{\infty} R_jg^{-1}\left( x_0,\frac{\mu(\overline{B_j})}{R_j^{n-1}}\right)\right).$$
Or, by Remark $2.2$, the condition$(SC)$ and inequalities $(2.2)$, $(2.3)$, we have
$$R_jg^{-1}\left( x_0,\frac{\mu(\overline{B_j})}{R_j^{n-1}}\right) = g^{-1}\left( x_0,\frac{\mu(\overline{B_j})}{R_j^{n-1}}\right)\int_{R_j}^{2R_j}  \, \mathrm{d}s \leq C \int_{R_j}^{2R_j}  \, g^{-1} \left(x_0,\frac{\mu(B(x_0,s)}{s^{n-1}}\right) \mathrm{d}s.$$
So,
$$\sum_{j=0}^{\infty} R_jg^{-1}\left( x_0,\frac{\mu(\overline{B_j})}{R_j^{n-1}}\right) \leq C \int_{0}^{2R}  \, g^{-1}\left( x_0,\frac{\mu(B(x_0,s)}{s^{n-1}}\right) \mathrm{d}s.$$
Therefore,
$$ u(x_0) \leq C\left( R + \left(\displaystyle \dashint_{B_0\setminus B_1} u^{\gamma} \, \mathrm{d}x\right)^\frac{1}{\gamma} + W^{\mu}_{G(\cdot)}(x_0,2R) \right).$$
This complete the proof. 
\end{proof}

\begin{rms}
Let $G(\cdot) \in \Phi(\Omega)$ satisfies $(SC)$ and $(A_0)$ 
\begin{enumerate}
\item[1)] If $g_0 \leq n$, by Corollary $3.7.9$ in $\cite{ref9}$ and Proposition $5.8$, we have $L^{G(\cdot)}(B) \subset L^{g_0}(B) \subset L^{\chi^\prime,\infty}(B)$.
\item[2)] If $u$ be a nonnegative supersolution to $(3.1)$ in $B_{2R}$ satisfies the assumptions of Lemma $3.4$, then $u \in L^{\chi^\prime,\infty}(B_R)$. Indeed,  by the weak Harnack inequality, we have $u \in L^\gamma(B_{R})$ for any $\gamma < \gamma_0$ with
$$\gamma_0 := 
\begin{cases}
\frac{n(g_0-1)}{n-g_0} & \text{if} \; g_0 < n \\
\infty & \text{if} \; g_0 \geq n.
\end{cases}$$
As $\chi^\prime < \gamma_0$, then $u \in L^{\chi^\prime,\infty}(B_R)$.
\item[3)] Let us mention, the result of Theorem $5.10$ still holds for a signed Radon measure $\mu$ if $|\mu| \in W^{1,G(\cdot)}_0(\Omega)$ and the solutions of equation $(3.2)$ are locally bounded.
\end{enumerate}
\end{rms}

Finally, combining Theorem $5.6$, Theorem $5.10$ and weak Harnack inequality (see Lemma $3.4$), we obtain the following main result.

\begin{Th}
Let $G(\cdot) \in \Phi(\Omega)$ satisfies $(SC)$ and $(A_0)$. Let $u$ be a nonnegative supersolution to $(3.1)$ in $B_{2R} = B(x_0, 2R)$ and  $\mu$ is the associated Radon measure of $u$. If $u$ is lower semicontinuous at $x_0$ and that one of the following holds
\begin{enumerate}
\item[(1)] $G(\cdot)$ satisfies $(A_{1,s_*})$ and $\norme{u}_{L^s(B_{2R})} \leq d$, where $s_*= \frac{ns}{n+s}$ and  $s > \max\{\frac{n}{g_0},1\}(g^0-g_0)$.
\item[(2)] $G(\cdot)$ satisfies $(A_{1})$, $\norme{u}_{W^{1,G(\cdot)}(B_{2R})} \leq d$ and $\frac{ng_0}{n-g_0} > g^0$.
\end{enumerate} 
Then there exists a positive constant $C$ such that
\begin{equation}
\frac{1}{C} W^{\mu}_{G(\cdot)}(x_0,R) - 2R \leq u(x_0) \leq C\left(R + \inf_{B}u +  W^{\mu}_{G(\cdot)}(x_0,2R) \right).
\end{equation}
\end{Th}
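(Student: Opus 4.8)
The plan is to splice together the two one-sided estimates already at our disposal --- the lower bound of Theorem 5.6 and the upper bound of Theorem 5.10 --- and to eliminate the annular average occurring in the latter by means of the weak Harnack inequality of Lemma 3.4. Two preliminary remarks make the cited results applicable. First, since $u$ is a nonnegative supersolution of $(3.1)$, the Riesz-type representation recalled in the Introduction yields a nonnegative Radon measure $\mu$ with $-\operatorname{div}\mathcal{A}(x,\nabla u)=\mu$, so that $u$ is a nonnegative weak solution of $(3.2)$ with this $\mu$. Second, each of the hypotheses $(1)$ and $(2)$ forces the extra structural assumption $(A_{1,n})$ needed in Theorems 5.6 and 5.10: in case $(1)$ one has $s_\ast=\tfrac{ns}{n+s}<n$, so for $R<1$ the condition $t^{n}\in[1,R^{-n}]$ implies $1\le t^{s_\ast}\le R^{-s_\ast}\le R^{-n}$, whence $(A_{1,s_\ast})$ is formally stronger than $(A_{1,n})$; in case $(2)$, $(A_1)$ together with $(SC)$ and $(A_0)$ yields $(A_{1,n})$ via the characterisation of Lemma 5.4 (see also \cite{ref9}) and the bounds $(2.2)$--$(2.3)$ and $(A_0)$. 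Finally, since the cited theorems are stated on balls compactly contained in the ambient domain, one first applies them on $B_{2\rho}$ with $\rho<R$ and lets $\rho\uparrow R$; every term converges monotonically (the Wolff potentials because their integrands are nonnegative), so the passage to the limit is harmless.

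\emph{Lower bound.} Applying Theorem 5.6 to $u$ on $B_R\subset B_{2R}$ and using $u\ge 0$, so that $\inf_{B_{2R}}u\ge 0$, we obtain
\[
u(x_0)\ \ge\ C\,W^{\mu}_{G(\cdot)}(x_0,R)+\inf_{B_{2R}}u-2R\ \ge\ C\,W^{\mu}_{G(\cdot)}(x_0,R)-2R ,
\]
which, after renaming the constant, is the left-hand inequality in $(5.14)$.

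\emph{Upper bound.} Under the strengthened hypothesis $(1)$ or $(2)$, Lemma 3.4 gives the weak Harnack inequality for every $\gamma<\gamma_0$, where $\gamma_0=\tfrac{n(g_0-1)}{n-g_0}$ if $g_0<n$ and $\gamma_0=\infty$ if $g_0\ge n$; in particular, by Remark 5.11(2), $u\in L^{\gamma}(B_R)$ for every $\gamma<\gamma_0$, and since $\chi'=\tfrac{n(g_0-1)}{n-1}<\gamma_0$ it follows that $u\in L^{\chi',\infty}(B_R)\subset L^{\chi',\infty}(\tfrac12 B)$. Hence Theorem 5.10 applies, and for every $\gamma>0$,
\[
u(x_0)\ \le\ C\Big(R+\Big(\dashint_{B\setminus\frac12\overline B}u^{\gamma}\,\mathrm{d}x\Big)^{1/\gamma}+W^{\mu}_{G(\cdot)}(x_0,2R)\Big).
\]
Now fix any $\gamma\in(0,\gamma_0)$. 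Comparing the Lebesgue measures of $B\setminus\frac12\overline B$ and $B_{2R}$, using $0\le u\le u+R$, and then applying the weak Harnack inequality of Lemma 3.4 for this $\gamma$,
\[
\Big(\dashint_{B\setminus\frac12\overline B}u^{\gamma}\,\mathrm{d}x\Big)^{1/\gamma}\ \le\ C\Big(\dashint_{B_{2R}}(u+R)^{\gamma}\,\mathrm{d}x\Big)^{1/\gamma}\ \le\ C\,\essinf_{B_R}(u+R)\ =\ C\big(\inf_{B}u+R\big),
\]
the last equality because the lower semicontinuous representative of a supersolution satisfies $\essinf_{B_R}u=\inf_{B}u$ with $B=B_R$. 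Substituting back yields the right-hand inequality in $(5.14)$.

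\emph{Main obstacle.} There is no genuinely new analytic difficulty here: Theorems 5.6 and 5.10 carry the weight, and the steps above are organisational. The delicate point is the chain of implications linking $(A_{1,s_\ast})$, $(A_1)$ and $(A_{1,n})$, and --- above all --- the recognition that the \emph{strengthened} exponent conditions $s>\max\{\tfrac{n}{g_0},1\}(g^0-g_0)$ in $(1)$ and $\tfrac{ng_0}{n-g_0}>g^0$ in $(2)$ are precisely what upgrade Lemma 3.4 to a weak Harnack inequality valid on the whole range $\gamma<\gamma_0$; without this, one cannot secure the integrability $u\in L^{\chi',\infty}(\tfrac12 B)$ that enters Theorem 5.10 as a hypothesis. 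The residual points --- identifying $\inf$ with $\essinf$ for the lower semicontinuous representative, and the exhaustion $B_{2\rho}\uparrow B_{2R}$ --- are routine.
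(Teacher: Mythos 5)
Your proposal is correct and follows essentially the same route as the paper, which obtains Theorem 5.12 precisely by combining the lower bound of Theorem 5.6, the upper bound of Theorem 5.10, and the weak Harnack inequality of Lemma 3.4 (with Remark 5.11(2) supplying the integrability $u\in L^{\chi',\infty}$). Your additional remarks — that hypotheses (1) and (2) yield $(A_{1,n})$ and that $\essinf$ can be identified with $\inf$ for the lower semicontinuous representative — merely make explicit details the paper leaves implicit.
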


\begin{rma}
Noting that the result of the theorem $5.12$ can be extend, in similar way as in $\cite{ref8}$, to superharmonic functions related to equation $(3.1)$.
\end{rma}

As in $\cite{ref27}$, we apply our Wolff potential estimates to prove that the supersolutions satisfy a Harnack inequality and local Hölder continuity under an assumption on the growth order of the measure $\mu$.

\begin{Th}
Let $G(\cdot)$, $u$ and $\mu$ satisfy the assumptions of Theorem $5.12$. If there exist $\alpha \in (0,1)$ and $\tilde{C}>0$ such that
\begin{equation}
\mu(B(x,r) \leq \tilde{C}r^{n -1}g(x,r^{\alpha-1}),
\end{equation}
whenever $x \in B(x_0,R)$ and $0 < r < 4R$. Then there exists $C > 0$ such that
$$\sup_{B(x_0,R)}u(x) \leq C(\inf_{B(x_0,R)}u + R^\alpha).$$
\end{Th}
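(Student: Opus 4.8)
The argument will run entirely off Theorem $5.12$: assumption $(5.15)$ is used only to convert the Wolff potential into an explicit power of the radius, after which a short chaining argument inside $B(x_0,R)$ yields the Harnack inequality, with no further potential‑theoretic input.

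\textbf{Step 1: the potential becomes a power of $R$.} First I will show that $W^{\mu}_{G(\cdot)}(z,\rho)\le C\rho^{\alpha}$ for every $z\in B(x_0,R)$ and $0<\rho\le R/2$. Indeed, for $s\in(0,\rho)$ we have $s<4R$, so $(5.15)$ gives $\mu(B(z,s))\le\tilde C s^{n-1}g(z,s^{\alpha-1})$, and since $g^{-1}(z,\cdot)$ is nondecreasing,
$$g^{-1}\!\left(z,\frac{\mu(B(z,s))}{s^{n-1}}\right)\le g^{-1}\!\left(z,\tilde C\,g(z,s^{\alpha-1})\right)\le C_1\,s^{\alpha-1},$$
the last step being the standard $g/g^{-1}$ comparison used repeatedly in Section $5$ (Remark $2.2$, condition $(SC)$, and inequalities $(2.2)$, $(2.3)$), which gives $g^{-1}(z,\tilde C g(z,t))\le C_1 t$ with $C_1=C_1(\tilde C,g_0,g^0)$. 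Integrating in $s$ produces $W^{\mu}_{G(\cdot)}(z,\rho)\le C_1\alpha^{-1}\rho^{\alpha}$. Since $R<1$ and $\alpha\in(0,1)$, also $\rho\le\rho^{\alpha}$.

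\textbf{Step 2: pointwise bound and chaining.} I will work with the lower semicontinuous (superharmonic) representative of $u$, cf. Remark $5.13$, which changes neither $\sup_{B(x_0,R)}u$ nor $\inf_{B(x_0,R)}u$. For each $z\in B(x_0,R)$ I apply Theorem $5.12$ with centre $z$ and radius $R/4$; this is legitimate because $B(z,R/2)\subset B(x_0,3R/2)\Subset B_{2R}$, so $u$ is a supersolution there and the $L^{s}$- (resp. $W^{1,G(\cdot)}$-) bound on $B(z,R/2)$ follows from the one on $B_{2R}$. Theorem $5.12$ then gives, using Step $1$ with $\rho=R/2$,
$$u(z)\le C\left(\tfrac{R}{4}+\inf_{B(z,R/4)}u+W^{\mu}_{G(\cdot)}(z,R/2)\right)\le C\left(R^{\alpha}+\inf_{B(z,R/4)}u\right),$$
with $C$ independent of $z$. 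To finish, set $m:=\inf_{B(x_0,R)}u$ and pick $y^{\ast}\in B(x_0,R)$ with $u(y^{\ast})\le m+R^{\alpha}$. Given $z\in B(x_0,R)$, I join $z$ to $y^{\ast}$ along the segment $[z,y^{\ast}]\subset B(x_0,R)$, subdivided into $N$ pieces of length $<R/8$ with $N$ a universal constant and endpoints $z=z_0,\dots,z_N=y^{\ast}$; then $z_{i+1}\in B(z_i,R/4)$, so the estimate above yields $u(z_i)\le C(R^{\alpha}+u(z_{i+1}))$, and iterating $N$ times gives $u(z)\le C^{N}u(y^{\ast})+(C+\dots+C^{N})R^{\alpha}\le C'(m+R^{\alpha})$. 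Taking the supremum over $z\in B(x_0,R)$ produces $\sup_{B(x_0,R)}u\le C'\bigl(\inf_{B(x_0,R)}u+R^{\alpha}\bigr)$, which is the assertion (and incidentally yields local boundedness of $u$).

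\textbf{Main obstacle.} I expect the only genuinely delicate point to be Step $1$ — extracting the uniform estimate $g^{-1}(z,\tilde C g(z,t))\le C_1 t$, hence $W^{\mu}_{G(\cdot)}(z,\rho)\le C\rho^{\alpha}$, from $(SC)$, $(A_0)$ and Remark $2.2$; this is, however, precisely the kind of manipulation carried out throughout Section $5$. Step $2$ is routine: the hypotheses of Theorem $5.12$ transfer to the balls $B(z,R/2)$ because these are contained in $B_{2R}$, and the chain has a bounded number of links because balls are convex.
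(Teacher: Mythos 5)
Your proposal is correct, and its first step (turning hypothesis $(5.15)$ into $W^{\mu}_{G(\cdot)}\lesssim \rho^{\alpha}$ via the comparison $g^{-1}(z,\tilde C g(z,t))\le C_1 t$ coming from $(SC)$, Remark $2.2$ and inequalities $(2.2)$--$(2.3)$) is exactly the computation the paper performs inside its displayed estimate. Where you diverge is the second step. The paper needs no chaining: for each $x\in B(x_0,R)$ it applies the upper bound of Theorem $5.12$ at centre $x$ with radius comparable to $R$, so that the infimum appearing in the estimate is taken over a ball $B(x,2R)\supset B(x_0,R)$ and is therefore directly dominated by $\inf_{B(x_0,R)}u$; taking the supremum over $x$ finishes the proof in three lines. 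You instead apply Theorem $5.12$ at centre $z$ with the small radius $R/4$ and then run a finite chain of overlapping balls from $z$ to a near-minimizing point $y^{\ast}$, iterating $u(z_i)\le C(R^{\alpha}+u(z_{i+1}))$. Your route is longer and accumulates the constant $C^{N}$, but it has a real advantage in bookkeeping: all your balls $B(z,R/2)$ stay inside $B_{2R}$, so you only ever use that $u$ is a supersolution (and satisfies the weak-Harnack hypotheses) on $B(x_0,2R)$, whereas the paper's single application at centre $x$ with potential radius $2R$ implicitly invokes the upper estimate on balls $B(x,2R)$ that may leave $B(x_0,2R)$ (consistent with its assumption of $(5.15)$ for $r<4R$, but not literally covered by "the assumptions of Theorem $5.12$"). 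Your handling of lower semicontinuity via the lsc representative is at the same level of rigor as the paper, which tacitly uses the pointwise upper bound at every $x\in B(x_0,R)$. Both arguments are sound; the paper's is shorter, yours is more self-contained with respect to the stated hypotheses.
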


\begin{proof}
If $x \in B(x_0,R)$, then, by the upper estimate from Theorem $5.12$, we have
$$\begin{array}{ll}
u(x) \leq & C(\inf_{B(x,2R)}u + R +  W^{\mu}_{G(\cdot)}(x,2R))\\
& \leq C\left(\inf_{B(x_0,R)}u + R + \displaystyle \int_{0}^{2R}  \, g^{-1}\left(x,\frac{s^{n-1}g(x,s^{\alpha - 1})}{s^{n-1}}\right) \mathrm{d}s\right)\\
& \leq C\left(\inf_{B(x_0,R)}u + R^\alpha\right).
\end{array}$$
Hence, we take supremum over in $B(x_0,R)$, we obtain the Harnack inequality. 
\end{proof}

\begin{cor}
Let $G(\cdot)$, $u$ and $\mu$ satisfy the assumptions of Theorem $5.12$. Then $\mu$ satisfies $(5.14)$ if and only if $u$ is locally Hölder continuous.
\end{cor}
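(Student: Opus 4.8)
The plan is to prove the two implications separately, reading $(5.14)\Rightarrow$ Hölder continuity off the Harnack inequality of Theorem~5.14 and reading the converse off the lower Wolff bound (Theorem~5.6, equivalently the left inequality of Theorem~5.12).

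\textbf{$(5.14)\Rightarrow$ local Hölder continuity.} First I would note that $(5.14)$ forces a bound on the potential itself: by $(SC)$ together with the scaling relations $(2.2)$–$(2.3)$ and their analogues for $G^{*}/g^{-1}$ (Remark~2.2), one has $g^{-1}(x,\tilde C\,g(x,\tau))\le C\tau$, whence
$$W^{\mu}_{G(\cdot)}(x,r)\ \le\ \int_{0}^{r}g^{-1}\!\left(x,\tilde C\,g(x,s^{\alpha-1})\right)\mathrm{d}s\ \le\ C\int_{0}^{r}s^{\alpha-1}\,\mathrm{d}s\ =\ \tfrac{C}{\alpha}\,r^{\alpha}$$
for $x\in B(x_{0},R)$, $0<r<4R$; in particular Theorem~5.14 applies and $u$ is locally bounded. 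Fix a compact $K\Subset\Omega$ and $R<1$ so small that $B_{4R}(z)\Subset\Omega$ for $z\in K$ and that the constant in Theorem~5.14 and the norms of $u$ entering hypotheses $(1)$–$(2)$ of Theorem~5.12 over these balls are uniform in $z\in K$. For $z\in K$ put $R_{j}=2^{-j}R$, $B_{j}=B(z,R_{j})$, $m_{j}=\inf_{B_{j}}u$, $M_{j}=\sup_{B_{j}}u$, $\omega_{j}=M_{j}-m_{j}$. The function $u-m_{j}$ is a nonnegative supersolution on $B_{j}=B_{2R_{j+1}}$, lower semicontinuous at $z$, with the same associated measure $\mu$, satisfying the hypotheses of Theorem~5.12 uniformly in $j$; Theorem~5.14 on $B_{j+1}$ then gives $M_{j+1}-m_{j}\le C(m_{j+1}-m_{j})+C R_{j+1}^{\alpha}$. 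Writing the left side as $\omega_{j+1}+(m_{j+1}-m_{j})$ and using $0\le m_{j+1}-m_{j}\le\omega_{j}-\omega_{j+1}$ (the last inequality because $M_{j+1}\le M_{j}$), this rearranges to
$$\omega_{j+1}\ \le\ \big(1-\tfrac{1}{C}\big)\,\omega_{j}+R_{j+1}^{\alpha}.$$
The standard iteration lemma for such recursions yields $\omega_{j}\le C R_{j}^{\beta}$ for some $\beta=\beta(\alpha,C)\in(0,1)$ with $C$ uniform over $z\in K$, and comparing an arbitrary $\rho\le R$ with the dyadic scale gives $\operatorname{osc}_{B(z,\rho)}u\le C\rho^{\beta}$ for all $z\in K$; that is, $u\in C^{0,\beta}_{\mathrm{loc}}(\Omega)$.

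\textbf{Local Hölder continuity $\Rightarrow(5.14)$.} Suppose $u\in C^{0,\beta}_{\mathrm{loc}}$ and set $\beta'=\min(\beta,1)$. Applying the lower bound of Theorem~5.12 to the nonnegative supersolution $u-\inf_{B(x,4r)}u$ on $B(x,4r)$ (centre $x$, radius $2r$), for $x$ in a fixed compact set and $r<1$ with $B(x,4r)\Subset\Omega$,
$$\tfrac{1}{C}\,W^{\mu}_{G(\cdot)}(x,2r)\ \le\ \big(u(x)-\inf_{B(x,4r)}u\big)+4r\ \le\ \operatorname{osc}_{B(x,4r)}u+4r\ \le\ C\,r^{\beta'}.$$
On the other hand, since $s\mapsto\mu(B(x,s))$ is nondecreasing and $s^{n-1}\le(2r)^{n-1}$ on $[r,2r]$, monotonicity of $g^{-1}(x,\cdot)$ gives
$$W^{\mu}_{G(\cdot)}(x,2r)\ \ge\ \int_{r}^{2r}g^{-1}\!\left(x,\frac{\mu(B(x,r))}{(2r)^{n-1}}\right)\mathrm{d}s\ =\ r\,g^{-1}\!\left(x,\frac{\mu(B(x,r))}{2^{n-1}r^{n-1}}\right).$$
Combining the two displays, using the right-continuity of $g(x,\cdot)$ to pass from $g^{-1}(x,\mu(B(x,r))/2^{n-1}r^{n-1})\le C r^{\beta'-1}$ back to $\mu(B(x,r))/r^{n-1}\le C g(x,r^{\beta'-1})$, and absorbing the constant by $(SC)$, one gets $\mu(B(x,r))\le\tilde C\,r^{n-1}g(x,r^{\beta'-1})$, i.e.\ $(5.14)$ with $\alpha=\beta'$ (and then with any smaller exponent in $(0,1)$, the right side of $(5.14)$ being monotone in $\alpha$ for $r<1$).

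\textbf{Where the difficulty lies.} The only subtle point is the forward direction. One would naively also want a Harnack-type control of $\sup_{B}(M-u)$, but $M-u$ is merely a subsolution of the structurally reflected equation, which is outside the reach of Theorem~5.14. The observation that resolves this is that subtracting only the infimum over the \emph{larger} ball keeps $u-m_{j}$ a bona fide nonnegative supersolution, so Theorem~5.14 applies to it directly, while the missing half of the oscillation decay is produced for free by the trivial monotonicity $M_{j+1}\le M_{j}$. What remains is routine: checking that the constant in Theorem~5.14 and the norm bounds required by hypotheses $(1)$–$(2)$ of Theorem~5.12 can be chosen uniformly over the compact set, and invoking the standard iteration lemma.
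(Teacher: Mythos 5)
Your proposal is correct and follows essentially the same route as the paper: the converse direction is the paper's argument almost verbatim (apply the lower Wolff bound of Theorem 5.12 to $u-\inf u$ on a ball around $x$, bound $\int_r^{2r}$ below by $r\,g^{-1}(x,\mu(B(x,r))/Cr^{n-1})$, and invert $g$ using $(SC)$), while the forward direction is the paper's appeal to the Harnack inequality of Theorem 5.14 followed by the standard Harnack-to-H\"older iteration, which the paper delegates to \cite{ref13} and you write out. Your explicit one-sided oscillation decay (applying Theorem 5.14 only to $u-m_j$ and using $M_{j+1}\le M_j$, since $M_j-u$ is merely a subsolution and outside the reach of the theorem) is exactly the right adaptation of that standard argument to supersolutions, and the remaining uniformity checks you flag as routine indeed are.
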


\begin{proof}
Let $u$ be a nonnegative supersolution to $(3.1)$ in $B_{2R} = B(x_0, 2R)$ and  $\mu$ is the associated Radon measure of $u$. If the condition $(5.14)$ holds, then by Theorem $5.14$, $u$ satisfies the Harnack inequality. So, by standard arguments as in $\cite{ref13}$, we get locally Hölder continuity of $u$. \\
For the other implication, let $u \in C^{0,\alpha}_{loc}(\Omega)$, we apply the lower estimate from Theorem $5.12$ for $u - \inf_{B(x,2r)}u$, we obtain
$$\begin{array}{ll}
\displaystyle g^{-1}\left( x,\displaystyle \frac{\mu(B(x,r)}{r^{n-1}}\right) & \leq  \displaystyle \frac{C}{r}\int_{r}^{2r}  \, g^{-1} \left(x,\frac{\mu(B(x,s)}{s^{n-1}}\right) \mathrm{d}s\\
& \leq \displaystyle \frac{C}{r}\left( u(x) - \inf_{B(x,2r)}u + r \right) \\
& \leq Cr^{\alpha - 1}.
\end{array}$$
Then
$$\mu(B(x,r) \leq Cr^{n-1}g(x,r^{\alpha - 1}).$$
Therefore, we have the equivalent. 
\end{proof}

\end{document}